\documentclass{amsart}

\usepackage{mathrsfs}
\usepackage{stmaryrd}
\usepackage{enumerate}
\usepackage{tikz-cd}
\usepackage{aliascnt}
\usepackage[colorlinks=true, linkcolor=black, citecolor=magenta]{hyperref}


\newcommand{\Bibkeyhack}[3]{}

\newtheorem{theorem}{Theorem}

\newaliascnt{lemma}{theorem}
\newtheorem{lemma}[lemma]{Lemma}
\aliascntresetthe{lemma}

\newaliascnt{corollary}{theorem}
\newtheorem{corollary}[corollary]{Corollary}
\aliascntresetthe{corollary}

\newaliascnt{proposition}{theorem}
\newtheorem{proposition}[proposition]{Proposition}
\aliascntresetthe{proposition}

\newaliascnt{remark}{theorem}
\newtheorem{remark}[remark]{Remark}
\aliascntresetthe{remark}

\theoremstyle{definition}

\newaliascnt{definition}{theorem}
\newtheorem{definition}[definition]{Definition}
\aliascntresetthe{definition}

\newtheorem*{acknowledgements}{Acknowledgements}

\newcommand{\C}{\mathbb{C}}
\newcommand{\R}{\mathbb{R}}

\newcommand{\Z}{\mathbb{Z}}

\newcommand{\bG}{\mathbb{G}}
\newcommand{\bA}{\mathbb{A}}

\newcommand{\fX}{\mathfrak{X}}
\newcommand{\fY}{\mathfrak{Y}}
\newcommand{\fA}{\mathfrak{A}}

\newcommand{\bL}{\mathbb{L}}

\newcommand{\sL}{\mathscr{L}}
\newcommand{\sG}{\mathscr{G}}

\newcommand{\sM}{\mathscr{M}}

\DeclareMathOperator{\ord}{ord}

\DeclareMathOperator{\ordjac}{ordjac}

\newcommand{\dmu}{\mathrm{d}\mu}

\DeclareMathOperator{\init}{in}

\DeclareMathOperator{\Spec}{Spec}

\DeclareMathOperator{\Trop}{Trop}

\DeclareMathOperator{\trop}{trop}

\newcommand{\Var}{\mathbf{Var}}

\newcommand{\naive}{\mathrm{naive}}

\newcommand{\cO}{\mathcal{O}}

\DeclareMathOperator{\supp}{supp}

\newcommand{\cA}{\mathcal{A}}
\newcommand{\cM}{\mathcal{M}}
\newcommand{\cB}{\mathcal{B}}
\newcommand{\Gr}{\mathrm{Gr}}

\DeclareMathOperator{\wt}{wt}

\DeclareMathOperator{\rk}{rk}

\DeclareMathOperator{\Hom}{Hom}

\title{Motivic zeta functions of hyperplane arrangements}

\author{Max Kutler and Jeremy Usatine}

\address{Max Kutler, Department of Mathematics, University of Kentucky}
\email{max.kutler@uky.edu}

\address{Jeremy Usatine, Department of Mathematics, Yale University}
\email{jeremy.usatine@yale.edu}

\begin{document}
\maketitle

\begin{abstract}
For each central essential hyperplane arrangement $\mathcal{A}$ over an algebraically closed field, let $Z_\mathcal{A}^{\hat\mu}(T)$ denote the Denef-Loeser motivic zeta function of $\mathcal{A}$. We prove a formula expressing $Z_\mathcal{A}^{\hat\mu}(T)$ in terms of the Milnor fibers of related hyperplane arrangements. We use this formula to show that the map taking each complex arrangement $\mathcal{A}$ to the Hodge-Deligne specialization of $Z_{\mathcal{A}}^{\hat\mu}(T)$ is locally constant on the realization space of any loop-free matroid. We also prove a combinatorial formula expressing the motivic Igusa zeta function of $\mathcal{A}$ in terms of the characteristic polynomials of related arrangements.
\end{abstract}

\numberwithin{theorem}{section}
\numberwithin{lemma}{section}
\numberwithin{corollary}{section}
\numberwithin{proposition}{section}
\numberwithin{remark}{section}
\numberwithin{definition}{section}

\section{Introduction}

We study hyperplane arrangements and the motivic zeta functions of Denef and Loeser. Let $k$ be an algebraically closed field, and let $H_1, \dots, H_n$ be a central essential arrangement of hyperplanes in $\bA_k^d$. If $f_1, \dots, f_n$ are linear forms defining $H_1, \dots, H_n$, respectively, then we can consider the Denef-Loeser motivic zeta function $Z_f^{\hat\mu}(T)$ of $f=f_1 \cdots f_n$ and the motivic Igusa zeta function $Z_f^{\naive}(T)$ of $f$. 

Inspired by Kontsevich's theory of motivic integration \cite{Kontsevich}, Denef and Loeser defined zeta functions \cite{DenefLoeser1998, DenefLoeser2001, DenefLoeser2002} that are power series with coefficients in a Grothendieck ring of varieties. These zeta functions are related to multiple well-known invariants in singularity theory and birational geometry, and they have implications for Igusa's monodromy conjecture, a longstanding conjecture concerning the poles of Igusa's local zeta function. There has been interest in understanding these motivic zeta functions, and the closely related topological zeta function, in the case of polynomials defining hyperplane arrangements \cite{BudurSaitoYuzvinsky, BudurMustataTeitler, vanderVeer}.

In this paper, we prove a formula for $Z_f^{\hat\mu}(T)$ in terms of the classes of Milnor fibers of certain related hyperplane arrangements. We use this formula and a result in \cite{KutlerUsatine} to show that certain specializations of $Z_f^{\hat\mu}(T)$, including the Hodge-Deligne specialization, remain constant as we vary the arrangement $H_1, \dots, H_n$ within the same connected component of a matroid's realization space. We also prove a combinatorial formula for $Z_f^{\naive}(T)$ in terms of the characteristic polynomials of certain related matroids.

\subsection{Statements of main results}

Throughout this paper, $k$ will be an algebraically closed field. Before we state our results, we need to set some notation.

For each $n \in \Z_{>0}$, let $\mu_n \subset k^\times$ be the group of $n$-th roots of unity, let $K_0^{\mu_n}(\Var_k)$ be the $\mu_n$-equivariant Grothendieck ring of $k$-varieties, let $\bL \in K_0^{\mu_n}(\Var_k)$ be the class of $\bA_k^1$ with the trivial $\mu_n$-action, and let $\sM_k^{\mu_n} = K_0^{\mu_n}(\Var_k)[\bL^{-1}]$. Let $\sM_k^{\hat\mu}=\varinjlim_n \sM_k^{\mu_n}$, and let $\bL \in \sM_k^{\hat\mu}$ be the image of $\bL \in \sM_k^{\mu_n}$ for any $n$.

Let $d, n \in \Z_{>0}$, and let $\Gr_{d,n}$ be the Grassmannian of $d$-dimensional linear subspaces in $\bA_k^n = \Spec(k[x_1, \dots, x_n])$. For each $\cA \in \Gr_{d,n}(k)$, let $X_\cA$ denote the corresponding linear subspace, let $F_\cA$ be the scheme theoretic intersection of $X_\cA$ with the closed subscheme of $\bA_k^n$ defined by $(x_1 \cdots x_n -1)$, and endow $F_\cA$ with the restriction of the $\mu_n$-action on $\bA_k^n$ where each $\xi \in \mu_n$ acts by scalar multiplication. Let $Z_{\cA,k}^{\hat\mu}(T) \in \sM_k^{\hat\mu} \llbracket T \rrbracket$ be the Denef-Loeser motivic zeta function of $(x_1 \cdots x_n)|_{X_\cA}$, and let $Z_{\cA, 0}^{\hat\mu}(T) \in \sM_k^{\hat\mu} \llbracket T \rrbracket$ be the Denef-Loeser motivic zeta function of $(x_1 \cdots x_n)|_{X_\cA}$ at the origin of $\bA_k^n$.

If $X_\cA$ is not contained in a coordinate hyperplane of $\bA_k^n$, then the restrictions of the coordinates $x_i$ define a central essential hyperplane arrangement in $X_\cA$, the Milnor fiber of that hyperplane arrangement is $F_\cA$, the $\mu_n$-action on $F_\cA$ is the monodromy action, and $Z_{\cA,k}^{\hat\mu}(T)$ and $Z_{\cA,0}^{\hat\mu}(T)$ are the Denef-Loeser motivic zeta functions associated to that arrangement. Note that we are using a definition of the Milnor fiber that takes advantage of the fact that a hyperplane arrangement is defined by a homogeneous polynomial. This definition is common in the hyperplane arrangement literature, and it allows us to consider the Milnor fiber $F_\cA$ as a variety.

\begin{remark}
If $H_1, \dots, H_n$ is a central essential hyperplane arrangement in $\bA_k^d$, then any choice of linear forms defining $H_1, \dots, H_n$ gives a linear embedding of $\bA_k^d$ into $\bA_k^n$, and $H_1, \dots, H_n$ is the arrangement associated to the resulting subspace of $\bA_k^n$. Therefore, we lose no generality by considering the arrangements associated to $d$-dimensional linear subspaces in $\bA_k^n$.
\end{remark}

Let $\cM$ be a rank $d$ loop-free matroid on $\{1, \dots, n\}$, let $\Trop(\cM) \subset \R^n$ be the Bergman fan of $\cM$, and let $\Gr_\cM \subset \Gr_{d,n}$ be the locus parametrizing linear subspaces whose associated hyperplane arrangements have combinatorial type $\cM$. For any $w \in \Trop(\cM)$, there exists a rank $d$ loop-free matroid $\cM_w$ on $\{1, \dots, n\}$ such that for all $\cA \in \Gr_\cM(k)$, the initial degeneration $\init_w (X_\cA \cap \bG_{m,k}^n)$ is equal to $X_{\cA_w} \cap \bG_{m,k}^n$ for some unique $\cA_w \in \Gr_{\cM_w}(k)$. We refer to Section \ref*{linearsubspacesandmatroidssubsection} for the definition of $\cM_w$. Let $\cB(\cM)$ be the set of bases in $\cM$, and set
\[
	\wt_{\cM}: \R^n \to \R: (w_1, \dots, w_n) \mapsto \max_{B \in \cB(\cM)} \sum_{i \in B} w_i.
\]

In this paper, we will prove the following formulas that express the motivic zeta functions $Z^{\hat\mu}_{\cA, k}(T)$ and $Z^{\hat\mu}_{\cA,0}(T)$ in terms of classes of the Milnor fibers $F_{\cA_w}$.

\begin{theorem}
\label{hyperplanearrangementDLzetaformula}
Let $\cA \in \Gr_\cM(k)$. Then
\[
	Z^{\hat\mu}_{\cA, k}(T) = \sum_{w \in \Trop(\cM) \cap (\Z_{\geq 0}^n \setminus \{0\})} [F_{\cA_w}, \hat\mu] \bL^{-d-\wt_\cM(w)}(T, \dots, T)^w \in \sM_k^{\hat\mu}\llbracket T \rrbracket,
\]
and
\[
	Z^{\hat\mu}_{\cA,0}(T) = \sum_{w \in \Trop(\cM) \cap \Z_{> 0}^n} [F_{\cA_w}, \hat\mu] \bL^{-d-\wt_\cM(w)}(T, \dots, T)^w \in \sM_k^{\hat\mu}\llbracket T \rrbracket.
\]
\end{theorem}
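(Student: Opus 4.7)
The plan is to compute $Z^{\hat\mu}_{\cA,k}(T)$ via motivic integration on the arc space $\mathcal{L}(X_\cA)$, stratifying by the vector of orders $w = (\ord_t x_1, \dots, \ord_t x_n)$ of an arc. For every arc $\gamma$ whose image avoids the coordinate hyperplanes we have $\ord_t(f \circ \gamma) = |w| := \sum_i w_i$, where $f = (x_1 \cdots x_n)|_{X_\cA}$, while arcs mapping into a coordinate hyperplane satisfy $\ord_t(f) = \infty$ and so contribute nothing to the zeta function. Standard tropical geometry places the finite vectors $w$ in $\Trop(\cM) \cap \Z_{\geq 0}^n$, yielding a decomposition of $Z^{\hat\mu}_{\cA,k}(T)$ as a sum indexed by such $w$.

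First, I would compute the motivic volume of the stratum $\mathcal{L}(X_\cA)_w$ of arcs with order vector exactly $w$, using the tropical-arc-theoretic techniques developed in \cite{KutlerUsatine}. The leading-coefficient vector $(a_1, \dots, a_n)$ of such an arc lies in $\init_w(X_\cA \cap \bG_{m,k}^n) = X_{\cA_w} \cap \bG_{m,k}^n$. Imposing the Denef-Loeser contact condition further constrains the product $a_1 \cdots a_n$ to equal the leading coefficient of $f$, and fixing this coefficient to $1$ carves out the Milnor fiber $F_{\cA_w} \subset X_{\cA_w} \cap \bG_{m,k}^n$; showing that the truncation of $\mathcal{L}(X_\cA)_w$ to a sufficiently high jet level fibers over $F_{\cA_w}$ as an affine bundle, with fiber dimension governed by the tropical valuation of the defining relations of $X_\cA$, should yield
\[
    \mu\bigl(\mathcal{L}(X_\cA)_w\bigr) \;=\; [F_{\cA_w}, \hat\mu] \, \bL^{-d - \wt_\cM(w)} \in \sM_k^{\hat\mu}.
\]

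Next, I would verify the equivariant bookkeeping. The Denef-Loeser $\mu_{|w|}$-action on the $w$-stratum, induced by the rescaling $t \mapsto \zeta t$, acts on leading coefficients via $(a_1, \dots, a_n) \mapsto (\zeta^{w_1} a_1, \dots, \zeta^{w_n} a_n)$. Since $\zeta^{|w|} = 1$, this preserves the Milnor fiber $F_{\cA_w}$, and I would argue that the induced class in $\sM_k^{\hat\mu}$ agrees with $[F_{\cA_w}, \hat\mu]$ defined via the diagonal $\mu_n$-monodromy by scalar multiplication on $\bA_k^n$. Combining with the $T$-power $T^{|w|} = (T, \dots, T)^w$ coming from the integrand and summing over $w \in \Trop(\cM) \cap (\Z_{\geq 0}^n \setminus \{0\})$ yields the first formula. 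The local variant $Z^{\hat\mu}_{\cA,0}(T)$ is proved by the same argument, restricted to arcs through the origin, which corresponds to $w \in \Z_{>0}^n$.

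The main obstacle will lie in matching the equivariant structures: ensuring the affine-bundle fibration of the arc stratum over $F_{\cA_w}$ is $\mu_{|w|}$-equivariant, and then comparing the weighted $\mu_{|w|}$-action on leading coefficients with the diagonal $\mu_n$-monodromy action on $F_{\cA_w}$ as classes in $\sM_k^{\hat\mu}$. The non-equivariant volume computation should closely follow the tropical framework of \cite{KutlerUsatine}, so the heart of the proof will be in correctly matching these two cyclic actions across the change of variables.
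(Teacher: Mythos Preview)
Your proposal is correct and follows the same overall strategy as the paper: stratify the arc space by the tropical weight vector $w$, identify the contribution of each stratum via the initial degeneration $\init_w U_\cA = U_{\cA_w}$ and its intersection with $V(x_1\cdots x_n-1) = F_{\cA_w}$, and then compare the $\mu_{|w|}$-action coming from $t\mapsto\zeta t$ with the diagonal $\mu_n$-monodromy. You have also correctly located the crux of the argument in the equivariant matching.

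The paper's execution differs from yours in packaging. Rather than showing directly that a truncated stratum is an equivariant affine bundle over $F_{\cA_w}$, the paper base-changes to $R=k\llbracket\pi\rrbracket$, builds for each $w$ a smooth $R$-scheme $\fX^w$ with special fiber $\init_w U_\cA$ together with a $\mu_\ell$-equivariant map $\psi_w:\fX^w\to X_\cA\times_k\Spec R$, and then applies Hartmann's equivariant motivic change of variables formula on Greenberg schemes; the exponent $\wt_\cM(w)$ appears as the order of the jacobian of $\psi_w$, computed via the Gr\"obner basis of fundamental-circuit forms. For the equivariant comparison you flag, the paper does this in two steps: first a general lemma showing that for any closed $U\subset\bG_{m,k}^n$ with $\init_w U=\init_{w+v}U$ the classes $[(V_u\cap\init_w U)^w_{u\cdot w},\hat\mu]$ and $[(V_u\cap\init_w U)^v_{u\cdot v},\hat\mu]$ agree (using that $\init_{w'}U$ is pulled back from the quotient torus $\Spec k[w'^\perp\cap M]$), and then specializing to $v=(1,\dots,1)$. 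Your direct affine-bundle approach would work, but the change-of-variables route cleanly separates the non-equivariant volume computation from the action comparison and avoids having to check equivariance of the bundle structure by hand.
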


In the course of proving \autoref*{hyperplanearrangementDLzetaformula}, we prove \autoref*{zetafunctionschon} and \autoref*{tropicalzetaformulahomogeneous}, which give formulas for motivic zeta functions when certain tropical hypotheses are satisfied. We think of \autoref*{zetafunctionschon} and \autoref*{tropicalzetaformulahomogeneous} as being in the spirit of the formulas for zeta functions of so-called Newton non-degenerate hypersurfaces \cite{DenefHoornaert, Guibert, BoriesVeys, BultotNicaise}. To prove \autoref*{zetafunctionschon} and \autoref*{tropicalzetaformulahomogeneous}, we use certain $k \llbracket \pi \rrbracket$-schemes whose special fibers are the initial degenerations that arise in tropical geometry. These $k \llbracket \pi \rrbracket$-schemes have played an essential role in much of tropical geometry. See for example \cite{Gubler}. We also use Sebag's \cite{Sebag} theory of motivic integration for Greenberg schemes, which are non-constant coefficient versions of arc schemes. For our proofs to account for the $\hat\mu$-action, we use Hartmann's \cite{Hartmann} equivariant version of Sebag's motivic integration.

\autoref*{hyperplanearrangementDLzetaformula} allows us to use results about additive invariants of the Milnor fibers $F_{\cA_w}$ to obtain results about specializations of the Denef-Loeser motivic zeta functions. To state such an application, we first define some terminology that can apply to additive invariants. Let $\Z[\bL]$ be the polynomial ring over the symbol $\bL$, and endow $\sM_k^{\hat\mu}$ with the $\Z[\bL]$-algebra structure given by $\bL \mapsto \bL$.

\begin{definition}
Let $P$ be a $\Z[\bL]$-module, and let $\nu: \sM_k^{\hat\mu} \to P$ be a $\Z[\bL]$-module morphism. We say that $\nu$ is \emph{constant on smooth projective families with $\mu_n$-action} if the following always holds.
\begin{itemize}
\item If $S$ is a connected separated finite type $k$-scheme with trivial $\mu_n$-action and $X \to S$ is a $\mu_n$-equivariant smooth projective morphism from a scheme $X$ with $\mu_n$-action, then the map $S(k) \to P: s \mapsto \nu[X_s, \hat\mu]$ is constant, where $X_s$ denotes the fiber of $X \to S$ over $s$.
\end{itemize}

\end{definition}

\begin{remark}
If $k = \C$ and $\mathrm{HD}: \sM_k^{\hat\mu} \to \Z[u^{\pm 1},v^{\pm 1}]$ is the morphism that sends the class of each variety to its Hodge-Deligne polynomial, then $\mathrm{HD}$ is constant on smooth projective families with $\mu_n$-action.
\end{remark}

Note that if $w \in \Trop(\cM)$ and $\cA_1, \cA_2 \in \Gr_{\cM}(k)$ are in the same connected component of $\Gr_\cM$, then $(\cA_1)_w, (\cA_2)_w \in \Gr_{\cM_w}(k)$ are in the same connected component of $\Gr_{\cM_w}$. See for example \cite[Fact 2.4]{KutlerUsatine}. Therefore the following theorem is a direct consequence of \autoref*{hyperplanearrangementDLzetaformula} and \cite[Theorem 1.4]{KutlerUsatine}.

\begin{theorem}
\label{specializationinvarianceinconnectedcomponent}
Let $P$ be a torsion-free $\Z[\bL]$-module, let $\nu: \sM_k^{\hat\mu} \to P$ be a $\Z[\bL]$-module morphism that is constant on smooth projective families with $\mu_n$-action, and assume that the characteristic of $k$ does not divide $n$.

If $\cA_1, \cA_2 \in \Gr_\cM(k)$ are in the same connected component of $\Gr_\cM$, then
\[
	\nu(Z_{\cA_1, k}^{\hat\mu}(T)) = \nu(Z_{\cA_2, k}^{\hat\mu}(T)) \in P\llbracket T \rrbracket,
\]
and
\[
	\nu(Z_{\cA_1, 0}^{\hat\mu}(T)) = \nu(Z_{\cA_2, 0}^{\hat\mu}(T)) \in P\llbracket T \rrbracket.
\]

\end{theorem}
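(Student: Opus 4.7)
The plan is to deduce this directly from \autoref*{hyperplanearrangementDLzetaformula} together with \cite[Theorem 1.4]{KutlerUsatine}, exactly as hinted in the text preceding the statement. After applying \autoref*{hyperplanearrangementDLzetaformula} to each of $Z_{\cA_i, k}^{\hat\mu}(T)$ and $Z_{\cA_i, 0}^{\hat\mu}(T)$, the two sides of each desired equality differ only in the Milnor fiber classes $[F_{(\cA_i)_w}, \hat\mu]$: the index set $\Trop(\cM) \cap (\Z_{\geq 0}^n \setminus \{0\})$ (respectively $\Trop(\cM) \cap \Z_{> 0}^n$), the scalar $\bL^{-d-\wt_\cM(w)}$, and the monomial $(T, \dots, T)^w$ are determined by $\cM$ and $w$ alone.

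Next, I would apply $\nu$ coefficient by coefficient. Each coefficient of either power series is a finite sum, since only finitely many $w \in \Z_{\geq 0}^n$ have a given coordinate sum, so $\Z[\bL]$-linearity of $\nu$ reduces the problem to comparing $\nu[F_{(\cA_1)_w}, \hat\mu]$ with $\nu[F_{(\cA_2)_w}, \hat\mu]$ for each fixed $w$. The negative powers of $\bL$ in the scalars are handled via the $\Z[\bL]$-torsion-freeness of $P$: since $\bL$ acts invertibly on the source $\sM_k^{\hat\mu}$, any $\Z[\bL]$-linear map into a torsion-free $\Z[\bL]$-module canonically extends to a $\Z[\bL, \bL^{-1}]$-linear map, so each factor $\bL^{-d-\wt_\cM(w)}$ may be pulled past $\nu$.

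It then remains to produce the termwise equality $\nu[F_{(\cA_1)_w}, \hat\mu] = \nu[F_{(\cA_2)_w}, \hat\mu]$ for every relevant $w$. By \cite[Fact 2.4]{KutlerUsatine}, invoked explicitly in the paragraph preceding the statement, the assignment $\cA \mapsto \cA_w$ carries connected components of $\Gr_\cM$ into connected components of $\Gr_{\cM_w}$, so $(\cA_1)_w$ and $(\cA_2)_w$ lie in the same connected component of $\Gr_{\cM_w}$. With the standing hypotheses that $\mathrm{char}(k) \nmid n$ and that $\nu$ is constant on smooth projective families with $\mu_n$-action, \cite[Theorem 1.4]{KutlerUsatine} applies to $\cM_w$ and supplies precisely this equality. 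Summing over $w$ yields both claimed identities.

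The main obstacle is essentially absorbed into the two cited inputs, and no new idea is required here. The only genuine technicality is the bookkeeping around negative $\bL$-powers and the passage from coefficientwise identities to an identity of power series, both of which are routine consequences of the torsion-free hypothesis and the local finiteness of the indexing.
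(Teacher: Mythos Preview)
Your proposal is correct and follows exactly the approach the paper indicates: apply \autoref*{hyperplanearrangementDLzetaformula}, use \cite[Fact 2.4]{KutlerUsatine} to put $(\cA_1)_w$ and $(\cA_2)_w$ in the same component of $\Gr_{\cM_w}$, and invoke \cite[Theorem 1.4]{KutlerUsatine} termwise. The paper states the result as a direct consequence of these inputs without spelling out the bookkeeping about finite coefficients and negative $\bL$-powers, but your handling of those details is correct.
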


\begin{remark}
In the statement of \autoref*{specializationinvarianceinconnectedcomponent}, by $\nu$ applied to a power series, we mean the power series obtained by applying $\nu$ to each coefficient.
\end{remark}

In particular, \autoref*{specializationinvarianceinconnectedcomponent} implies that the Hodge-Deligne specialization of the Denef-Loeser motivic zeta function remains constant as we vary the linear subspace within the same connected component of $\Gr_\cM$. There has been much interest in understanding how invariants of hyperplane arrangements, particularly those invariants arising in singularity theory, vary as the arrangements vary with fixed combinatorial type. For example, a major open conjecture predicts that when $k = \C$, the Betti numbers of a hyperplane arrangement's Milnor fiber depend only on combinatorial type, i.e., they depend only on the matroid. Budur and Saito proved that a related invariant, the Hodge spectrum, depends only on the combinatorial type \cite{BudurSaito}. Randell proved that the diffeomorphism type, and thus Betti numbers, of the Milnor fiber is constant in smooth families of hyperplane arrangements with fixed combinatorial type \cite{Randell}. See \cite{Suciu} for a survey on such questions. Our perspective on \autoref*{specializationinvarianceinconnectedcomponent} is in the context of that literature, and we hope it illustrates the use of \autoref*{hyperplanearrangementDLzetaformula} in answering related questions.

Our final main result consists of combinatorial formulas for the motivic Igusa zeta functions of a hyperplane arrangement. It is well known that the motivic Igusa zeta functions are combinatorial invariants. For example, one can see this by using De Concini and Procesi's wonderful models \cite{DeConciniProcesi} and Denef and Loeser's formula for the motivic Igusa zeta function in terms of a log resolution \cite[Corollary 3.3.2]{DenefLoeser2001}. Regardless, we believe it is worth stating \autoref*{hyperplanearrangementIgusazetaformula} below, as it follows from the methods of this paper with little extra effort, and because we are not aware of these particular formulas having appeared in the literature.

Let $K_0(\Var_k)$ be the Grothendieck ring of $k$-varieties, let $\bL \in K_0(\Var_k)$ be the class of $\bA_k^1$, and let $\sM_k = K_0(\Var_k)[\bL^{-1}]$. For each $\cA \in \Gr_{d,n}(k)$, let $Z_{\cA,k}^{\naive}(T) \in \sM_k \llbracket T \rrbracket$ be the motivic Igusa zeta function of $(x_1 \cdots x_n)|_{X_\cA}$, and let $Z_{\cA, 0}^{\naive}(T) \in \sM_k \llbracket T \rrbracket$ be the motivic Igusa zeta function of $(x_1 \cdots x_n)|_{X_\cA}$ at the origin of $\bA_k^n$.

\begin{theorem}
\label{hyperplanearrangementIgusazetaformula}
Let $\cA \in \Gr_\cM(k)$. Then
\[
	Z^{\naive}_{\cA,k}(T) = \sum_{w \in \Trop(\cM) \cap \Z_{\geq 0}^n} \chi_{\cM_w}(\bL) \bL^{-d-\wt_\cM(w)}(T, \dots, T)^w \in \sM_k\llbracket T \rrbracket,
\]
and 
\[
	Z^{\naive}_{\cA,0}(T) = \sum_{w \in \Trop(\cM) \cap \Z_{> 0}^n} \chi_{\cM_w}(\bL) \bL^{-d-\wt_\cM(w)}(T, \dots, T)^w \in \sM_k\llbracket T \rrbracket,
\]
where $\chi_{\cM_w}(\bL) \in \sM_k$ is the characteristic polynomial of $\cM_w$ evaluated at $\bL$.
\end{theorem}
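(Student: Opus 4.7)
The plan is to adapt the proof of \autoref*{hyperplanearrangementDLzetaformula} to the non-equivariant setting. The naive motivic Igusa zeta function $Z^{\naive}$ is computed via Sebag's motivic integration on the Greenberg scheme of $X_\cA$, without the monodromy $\mu$-action that Hartmann's equivariant theory tracks. Accordingly, the proof proceeds in two steps: first, establish naive analogues of \autoref*{zetafunctionschon} and \autoref*{tropicalzetaformulahomogeneous}; second, identify the local class that arises in each tropical cell.

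For the first step, the proofs of the tropical zeta function formulas go through with only cosmetic changes once the $\mu$-action is dropped. The same Gubler-type $k\llbracket \pi \rrbracket$-scheme construction identifies each tropical cell of the Greenberg scheme (the arcs $\gamma$ with $\ord(x_i \gamma) = w_i$ for all $i$) with an affine bundle over the initial degeneration $X_{\cA_w} \cap \bG_{m,k}^n$, and Sebag's change-of-variables formula replaces Hartmann's equivariant version in the measure computation. The upshot is a naive tropical formula of the shape
\[
Z^{\naive}_{\cA, k}(T) = \sum_{w \in \Trop(\cM) \cap \Z_{\geq 0}^n} [X_{\cA_w} \cap \bG_{m,k}^n] \, \bL^{-d-\wt_\cM(w)}(T,\dots,T)^w,
\]
and analogously for the local version at the origin. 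The $w = 0$ cell, which records arcs on which $f = x_1 \cdots x_n$ has order zero, appears here intrinsically in the naive formulation, whereas it is absent from the $\hat\mu$-version because no nontrivial monodromy is attached to it.

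For the second step, we invoke the classical identity $[X_{\cA_w} \cap \bG_{m,k}^n] = \chi_{\cM_w}(\bL) \in \sM_k$, which expresses the motivic class of the complement of a central essential hyperplane arrangement as the characteristic polynomial of its matroid evaluated at $\bL$ (a standard consequence of deletion-contraction, or of the stratification by flats). Substituting into the naive tropical formula yields both formulas of \autoref*{hyperplanearrangementIgusazetaformula}. I expect no serious obstacle: as the authors remark, the theorem ``follows from the methods of this paper with little extra effort,'' and the main task is merely to verify that the tropical cell decomposition of the Greenberg scheme and its motivic measure behave as expected in Sebag's non-equivariant framework, which amounts to the equivariant argument with the monodromy bookkeeping deleted.
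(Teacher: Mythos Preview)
Your proposal is correct and follows essentially the same route as the paper. One minor point: you need not ``establish naive analogues'' of \autoref{zetafunctionschon}, since that theorem already states and proves the naive formula $Z^{\naive}_{X,u}(T) = \sum_{w \in \Trop(U) \cap \Z_{\geq 0}^n} [\init_w U / X]\, \bL^{-d-\ordjac(w)} T^{u \cdot w}$ alongside the $\hat\mu$-version; the paper simply applies it with $X = X_\cA$, $u = (1,\dots,1)$, invokes \autoref{grobnerbasislinearsubspaceordjac} to get $\ordjac(w) = \wt_\cM(w)$, uses \autoref{initialformmappedtoorigin} for the local version at the origin, and substitutes $[U_{\cA_w}] = \chi_{\cM_w}(\bL)$---exactly your second step.
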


\begin{acknowledgements}
We would like to acknowledge useful discussions with Dori Bejleri, Daniel Corey, Netanel Friedenberg, Dave Jensen, Kalina Mincheva, Sam Payne, and Dhruv Ranganathan. The second named author was supported by NSF Grant DMS-1702428 and a Graduate Research Fellowship from the NSF.
\end{acknowledgements}

\section{Preliminaries}
\label{preliminariessection}

In this section, we will set some notation and recall facts about the equivariant Grothendieck ring of varieties, the motivic zeta functions of Denef and Loeser, Hartmann's equivariant motivic integration, and linear subspaces and matroids.

\subsection{The equivariant Grothendieck ring of varieties}

Suppose $X$ is a separated finite type scheme over $k$. We will let $K_0(\Var_X)$ denote the Grothendieck ring of varieties over $X$, we will let $\bL \in K_0(\Var_X)$ denote the class of $\bA_k^1 \times_k X$, and for each separated finite type $X$-scheme $Y$, we will let $[Y/X] \in K_0(\Var_X)$ denote the class of $Y$. We will let $\sM_X$ denote the ring obtained by inverting $\bL$ in $K_0(\Var_X)$, and by slight abuse of notation, we will let $\bL, [Y/X] \in \sM_X$ denote the images of $\bL, [Y/X]$, respectively, in $\sM_X$. 

We will let $K_0(\Var_k)$ and $\sM_k$ denote $K_0(\Var_{\Spec(k)})$ and $\sM_{\Spec(k)}$, respectively, and for each separated finite type $k$-scheme $Y$, we will let $[Y] = [Y/\Spec(k)]$ in both $K_0(\Var_k)$ and $\sM_k$.

Suppose $G$ is a finite abelian group. An action of $G$ on a scheme is said to be \emph{good} if each orbit is contained in an affine open subscheme. For example, any $G$-action on any quasiprojective $k$-scheme is good. Suppose $X$ is a separated finite type $k$-scheme with a good $G$-action. We will let $K_0^G(\Var_X)$ denote the $G$-equivariant Grothendieck ring of varieties over $X$. For the precise definition of $K_0^G(\Var_X)$, we refer to \cite[Definition 4.1]{Hartmann}. We will let $\bL \in K_0^G(\Var_X)$ denote the class of $\bA_k^1 \times_k X$ with the action induced by the trivial $G$-action on $\bA_k^1$ and the given $G$-action on $X$, and for each separated finite type $X$-scheme $Y$ with good $G$-action making the structure morphism $G$-equivariant, we will let $[Y/X,G] \in K_0^G(\Var_X)$ denote the class of $Y$ with its given $G$-action. We will let $\sM_X^G$ denote the ring obtained by inverting $\bL$ in $K_0^G(\Var_X)$, and by slight abuse of notation, we will let $\bL, [Y/X,G] \in \sM_X^G$ denote the images of $\bL, [Y/X, G]$, respectively, in $\sM_X^G$.

If $X$ is a separated finite type $k$-scheme with no specified $G$-action and we refer to $K_0^G(\Var_X)$ or $\sM_X^G$, then we are considering $X$ with the trivial $G$-action. We will let $K_0^G(\Var_k)$ and $\sM_k^G$ denote $K_0^G(\Var_{\Spec(k)})$ and $\sM_{\Spec(k)}^G$, respectively, and for each separated finite type $k$-scheme $Y$ with good $G$-action making the structure morphism $G$-equivariant, we will let $[Y, G] = [Y/\Spec(k), G]$ in both $K_0^G(\Var_k)$ and $\sM_k$.

For each $\ell \in \Z_{>0}$, we will let $\mu_\ell \subset k^\times$ denote the group of $\ell$-th roots of unity. 

\begin{remark}
We will only consider $\mu_\ell$ as a finite group, so when the characteristic of $k$ divides $\ell$, we will not consider the non-reduced scheme structure of $\mu_\ell$. 
\end{remark}

For each $\ell, m \in \Z_{>0}$, there is a morphism $\mu_{\ell m} \to \mu_\ell: \xi \mapsto \xi^m$. Suppose that $X$ is a separated finite type scheme over $k$. Then for each $\ell, m \in \Z_{>0}$, the morphism $\mu_{\ell m} \to \mu_\ell$ induces ring morphisms $K_0^{\mu_\ell}(\Var_X) \to K_0^{\mu_{\ell m}}(\Var_X)$ and $\sM_X^{\mu_\ell} \to \sM_X^{\mu_{\ell m}}$. We will let $K_0^{\hat\mu}(\Var_X) = \varinjlim_{\ell}K_0^{\mu_\ell}(\Var_X)$ and $\sM_X^{\hat\mu} = \varinjlim_\ell \sM_X^{\mu_\ell}$. We will let $\bL \in K_0^{\hat\mu}(\Var_X)$ denote the image of $\bL \in K_0^{\mu_\ell}(\Var_X)$ for any $\ell \in \Z_{>0}$, and similarly we will let $\bL \in \sM_X^{\hat\mu}$ denote the image of $\bL \in \sM_X^{\mu_\ell}$ for any $\ell \in \Z_{>0}$. For each $\ell \in \Z_{>0}$ and each separated finite type $X$-scheme $Y$ with good $\mu_\ell$-action making the structure morphism $\mu_\ell$-equivariant, we will let $[Y/X, \hat\mu] \in K_0^{\hat\mu}(\Var_X)$ denote the image of $[Y/X, \mu_\ell] \in K_0^{\mu_\ell}(\Var_X)$, and we will similarly let $[Y/X, \hat\mu] \in \sM_X^{\hat\mu}$ denote the image of $[Y/X, \mu_\ell] \in \sM_X^{\mu_\ell}$.

We will let $K_0^{\hat\mu}(\Var_k)$ and $\sM_k^{\hat\mu}$ denote $K_0^{\hat\mu}(\Var_{\Spec(k)})$ and $\sM_{\Spec(k)}^{\hat\mu}$, respectively, and for each $\ell \in \Z_{>0}$ and each separated finite type $k$-scheme $Y$ with good $\mu_\ell$-action making the structure morphism $\mu_\ell$-equivaraint, we will let $[Y, \hat\mu] = [Y/\Spec(k), \hat\mu]$ in both $K_0^{\hat\mu}(\Var_k)$ and $\sM_k^{\hat\mu}$.

\subsection{The motivic zeta functions of Denef and Loeser}

Let $X$ be a smooth, pure dimensional, separated, finite type $k$-scheme. For each $\ell \in \Z_{\geq 0}$, we will let $\sL_\ell(X)$ denote $\ell$-th jet scheme of $X$, and for each $m \geq \ell$, we will let $\theta^m_\ell: \sL_m(X) \to \sL_\ell(X)$ denote the truncation morphism. We will let $\sL(X) = \varprojlim_\ell \sL_\ell(X)$ denote the arc scheme of $X$, and for each $\ell \in \Z_{\geq 0}$, we will let $\theta_\ell: \sL(X) \to \sL_\ell(X)$ denote the canonical morphism. The following is a special case of a theorem of Bhatt's \cite[Theorem 1.1]{Bhatt}.

\begin{theorem}[Bhatt]
The $k$-scheme $\sL(X)$ represents the functor taking each $k$-algebra $A$ to $\Hom_k(\Spec(A\llbracket \pi \rrbracket), X)$, and under this identification, each morphism $\theta_\ell: \sL(X) \to \sL_\ell(X)$ is the truncation morphism.
\end{theorem}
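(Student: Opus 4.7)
The plan is to prove the two assertions separately: first the representability of $\sL(X)$, then the interpretation of $\theta_\ell$ as truncation. I would begin by recalling the corresponding finite level result. For each $\ell \geq 0$, the jet scheme $\sL_\ell(X)$ represents the functor $A \mapsto \Hom_k(\Spec(A[\pi]/(\pi^{\ell+1})),X)$. This is classical: when $X = \Spec(k[x_1,\dots,x_n]/(f_1,\dots,f_r))$ is affine, a morphism $\Spec(A[\pi]/(\pi^{\ell+1})) \to X$ is the choice of elements $a_i(\pi) = \sum_{j=0}^\ell a_{i,j}\pi^j$ with $f_s(a_1(\pi),\dots,a_n(\pi)) \equiv 0 \pmod{\pi^{\ell+1}}$, and expanding in powers of $\pi$ writes these conditions as polynomial equations in the $a_{i,j}$ that cut out $\sL_\ell(X)$ inside $\bA_k^{n(\ell+1)}$. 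The general case follows by gluing along an affine open cover, using that the truncations between jet schemes are affine.

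Next I would pass to the inverse limit. Because the truncation maps $\theta^m_\ell$ are affine, $\sL(X) = \varprojlim_\ell \sL_\ell(X)$ exists as a $k$-scheme, and its functor of points satisfies
\[
\sL(X)(A) = \varprojlim_\ell \sL_\ell(X)(A) = \varprojlim_\ell \Hom_k(\Spec(A[\pi]/(\pi^{\ell+1})),X).
\]
Thus the theorem reduces to identifying this inverse limit with $\Hom_k(\Spec(A\llbracket \pi\rrbracket), X)$. When $X$ is affine, this is immediate from $A\llbracket \pi\rrbracket = \varprojlim_\ell A[\pi]/(\pi^{\ell+1})$ together with the universal property of the inverse limit of rings.

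The main obstacle, and the content that genuinely requires Bhatt's theorem, is the non-affine case for non-Noetherian $A$: a compatible system of morphisms on $\Spec(A[\pi]/(\pi^{\ell+1}))$ must glue to a single morphism out of $\Spec(A\llbracket \pi\rrbracket)$, which is not obvious since $\Spec(A\llbracket \pi\rrbracket)$ can have many primes not of the form $\mathfrak{p} + (\pi)$. I would attack this by choosing an affine open $U \subset X$ containing the image of the reduction $\Spec(A) = V(\pi) \subset \Spec(A\llbracket\pi\rrbracket)$ under $\phi_0$, and then arguing that the preimage of $U$ under any candidate extension $\phi$ is all of $\Spec(A\llbracket\pi\rrbracket)$. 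Since the preimage is an open subscheme containing $V(\pi)$, and since $\pi$ lies in the Jacobson radical of $A\llbracket\pi\rrbracket$, every closed point of $\Spec(A\llbracket\pi\rrbracket)$ lies in $V(\pi)$ when $A$ is a field; the non-Noetherian case is handled by Bhatt's topological argument that, for $X$ of finite type, the preimage must be the whole spectrum. Once this factoring is established, the affine case applied to $U$ produces the desired morphism $\phi$.

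Finally, for the second assertion, I would trace through the construction: the transition morphism $\sL_m(X) \to \sL_\ell(X)$ corresponds on $A$-points to restriction along the closed immersion $\Spec(A[\pi]/(\pi^{\ell+1})) \hookrightarrow \Spec(A[\pi]/(\pi^{m+1}))$, i.e., truncation of jets. Passing to the limit and using the representability already proved, the canonical map $\theta_\ell: \sL(X) \to \sL_\ell(X)$ corresponds on $A$-points to restriction along $\Spec(A[\pi]/(\pi^{\ell+1})) \hookrightarrow \Spec(A\llbracket\pi\rrbracket)$, which is precisely the truncation morphism by definition.
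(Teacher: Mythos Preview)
The paper does not prove this statement; it is quoted as a special case of \cite[Theorem~1.1]{Bhatt} and no argument is given. So there is no ``paper's own proof'' to compare against---the paper treats Bhatt's result as a black box, whereas you have attempted to sketch its proof.

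As for your sketch itself: the overall architecture is right, and you correctly isolate the genuine content as the algebraization step for non-affine $X$ and non-Noetherian $A$. Two comments. First, the sentence ``choosing an affine open $U \subset X$ containing the image of the reduction $\Spec(A) = V(\pi)$ under $\phi_0$'' is not justified as written: the image of $\Spec(A)$ need not lie in any single affine open of $X$. One must first pass to a Zariski cover of $\Spec(A)$ so that each piece maps into an affine open, solve the problem there, and then check that the local solutions glue; this last gluing step is itself not automatic and is part of what Bhatt's argument handles. Second, the phrase ``handled by Bhatt's topological argument'' is doing all the real work---you have correctly located the difficulty but not actually resolved it. The content of Bhatt's theorem is precisely that, for qcqs targets, the closed immersion $\Spec(A[\pi]/(\pi^{\ell+1})) \hookrightarrow \Spec(A\llbracket\pi\rrbracket)$ induces a bijection on maps to $X$ in the limit, and his proof uses either a Tannakian reconstruction argument or a careful analysis of the topology of $\Spec(A\llbracket\pi\rrbracket)$ that goes beyond the observation that $\pi$ lies in the Jacobson radical. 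Since the paper simply cites this, your level of detail already exceeds what the paper provides, but if you intend a self-contained proof you would need to fill in exactly this step.
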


A subset of $\sL(X)$ is called a \emph{cylinder} if it is the preimage, under $\theta_\ell$, of a constructible subset of $\sL_\ell(X)$ for some $\ell \in \Z_{\geq 0}$. We will let $\mu_X$ denote the motivic measure on $\sL(X)$, which assigns a motivic volume in $\sM_X$ to each cylinder.

Suppose $f$ is a regular function on $X$. If $x \in \sL(X)$ has residue field $k(x)$, then it corresponds to a $k$-morphism $\psi_x: \Spec(k(x)\llbracket \pi \rrbracket) \to X$, and we will let $f(x)$ denote $f(\psi_x) \in k(x)\llbracket \pi \rrbracket$. For each $x \in \sL(X)$, the \emph{order} of $f$ at $x$ will refer to the order of $\pi$ in the power series $f(x)$, and the \emph{angular component} of $f$ at $x$ will refer to the leading coefficient of the power series $f(x)$. We will let $\ord_f: \sL(X) \to \Z_{\geq 0} \cup \{\infty\}$ denote the function taking each $x \in \sL(X)$ to the order of $f$ at $x$. We will let $Z^\naive_f(T) \in \sM_X\llbracket T\rrbracket$ denote the motivic Igusa zeta function of $f$. Then
\[
	Z^\naive_f(T) = \sum_{\ell \in \Z_{\geq 0}} \mu_X(\ord_f^{-1}(\ell)) T^\ell \in \sM_X \llbracket T \rrbracket.
\]

\begin{remark}
In the literature, the motivic Igusa zeta function is sometimes referred to as the naive zeta function of Denef and Loeser.
\end{remark}

We will let $Z_f^{\hat\mu}(T) \in \sM_X^{\hat\mu}\llbracket T\rrbracket$ denote the Denef-Loeser motivic zeta function of $f$. We briefly recall the definition of $Z_f^{\hat\mu}(T)$. The constant term of $Z_f^{\hat\mu}(T)$ is equal to $0$. Let $\ell \in \Z_{>0}$, and let $Y_{\ell,1}$ be the closed subscheme of $\sL_\ell(X)$ where $f$ is equal to $\pi^\ell$. For each $k$-algebra $A$, there is a $\mu_\ell$-action on $A\llbracket \pi \rrbracket$ given by $\pi \mapsto \xi \pi$ for each $\xi \in \mu_\ell$, and these actions induce a $\mu_\ell$-action on $\sL_\ell(X)$ making $Y_{\ell, 1}$ invariant. Note also that the truncation morphism $\theta^\ell_0: \sL_\ell(X) \to X$ restricts to a $\mu_\ell$-equivariant morphism $Y_{\ell, 1} \to X$. Then the coefficient of $T^\ell$ in $Z_f^{\hat\mu}(T)$ is defined to be equal to $[Y_{\ell, 1}/X, \hat\mu]\bL^{-(\ell+1)\dim X} \in \sM_X^{\hat\mu}$.

\begin{remark}
Denef and Loeser defined versions of these zeta functions with coefficients in $\sM_k$ and $\sM^{\hat\mu}_k$ \cite{DenefLoeser1998, DenefLoeser2002}, and Looijenga introduced versions with coefficients in the relative Grothendieck rings $\sM_X$ and $\sM_X^{\hat\mu}$ \cite{Looijenga}. See \cite{DenefLoeser2001} for the definitions we are using for $Z_f^{\naive}(T)$ and $Z_f^{\hat\mu}(T)$, but note that compared to those definitions, ours differ by a normalization factor of $\bL^{-\dim X}$.
\end{remark}

\subsection{Hartmann's equivariant motivic integration}

For the remainder of this paper, let $R = k\llbracket \pi \rrbracket$, the ring of power series over $k$. We will set up some notation and recall facts for Greenberg schemes and Hartmann's equivariant motivic integration \cite{Hartmann}, which is an equivariant version of Sebag's motivic integration for formal schemes \cite{Sebag}. For the non-equivariant version of this theory, we also recommend the book \cite{ChambertLoirNicaiseSebag}.

\begin{remark}
In \cite{Hartmann}, Hartmann uses formal $R$-schemes. The analogous theory for algebraic $R$-schemes, as stated here, directly follows by taking $\pi$-adic completion.
\end{remark}

Let $\fX$ be a smooth, pure relative dimensional, separated, finite type $R$-scheme. We will let $\fX_0$ denote the special fiber of $\fX$. For each $\ell \in \Z_{\geq 0}$, we will let $\sG_\ell(\fX)$ denote the $\ell$-th Greenberg scheme of $\fX$. Thus $\sG_\ell(\fX)$ represents the functor taking each $k$-algebra $A$ to $\Hom_R(\Spec(A[\pi]/(\pi^{\ell+1})), \fX)$. For each $m \geq \ell$, we will let $\theta^m_\ell: \sG_m(\fX) \to \sG_\ell(\fX)$ denote the truncation morphism. We will let $\sG(\fX) = \varprojlim_\ell \sG_\ell(\fX)$ denote the Greenberg scheme of $\fX$, and for each $\ell \in \Z_{\geq 0}$, we will let $\theta_\ell: \sG(\fX) \to \sG_\ell(\fX)$ denote the canonical morphism. As for arc schemes, the following is a special case of \cite[Theorem 1.1]{Bhatt}. See for example \cite[Chapter 4, Proposition 3.1.7]{ChambertLoirNicaiseSebag}.

\begin{theorem}[Bhatt]
The $k$-scheme $\sG(\fX)$ represents the functor taking each $k$-algebra $A$ to $\Hom_R(\Spec(A\llbracket \pi \rrbracket), \fX)$, and under this identification, each morphism $\theta_\ell: \sG(\fX) \to \sG_\ell(\fX)$ is the truncation morphism.
\end{theorem}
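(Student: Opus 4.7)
The plan is to realize $\sG(\fX) = \varprojlim_\ell \sG_\ell(\fX)$ as an inverse limit in the category of $k$-schemes and then identify its functor of points with $A \mapsto \Hom_R(\Spec(A\llbracket \pi \rrbracket), \fX)$. By construction of the inverse limit, $\sG(\fX)$ represents the functor $A \mapsto \varprojlim_\ell \Hom_R(\Spec(A[\pi]/\pi^{\ell+1}), \fX)$, and the claim about the $\theta_\ell$ matching the truncation morphisms is automatic from the universal property once representability is established. The content of the theorem is therefore that the canonical reduction map
\[
	\Phi_A \colon \Hom_R(\Spec(A\llbracket \pi \rrbracket), \fX) \to \varprojlim_\ell \Hom_R(\Spec(A[\pi]/\pi^{\ell+1}), \fX)
\]
is a bijection, functorially in the $k$-algebra $A$.

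The fundamental input is the affine case. If $\fX = \Spec(B)$ with $B = R[x_1, \ldots, x_n]/(f_1, \ldots, f_m)$ a finitely presented $R$-algebra, then an $R$-algebra map $B \to C$ is the same data as a tuple $(c_1, \ldots, c_n) \in C^n$ satisfying $f_i(c) = 0$. Since $A\llbracket \pi \rrbracket = \varprojlim_\ell A[\pi]/\pi^{\ell+1}$ as $R$-algebras, and both the functor $C \mapsto C^n$ and the polynomial vanishing conditions commute with inverse limits of rings, $\Phi_A$ is visibly bijective when $\fX$ is affine.

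For general $\fX$ I would argue by gluing. Each morphism $\varphi_\ell \colon \Spec(A[\pi]/\pi^{\ell+1}) \to \fX$ in a compatible system has the same underlying continuous map $f_0 \colon |\Spec(A)| \to |\fX|$, since $\pi$ is nilpotent on the source. I would fix an affine open cover $\fX = \bigcup_i U_i$ and set $W_i = f_0^{-1}(U_i) \subseteq \Spec(A)$; after Zariski-localizing $A$ to make each $W_i$ affine, say $W_i = \Spec(A_i)$, the affine case produces an $R$-morphism $\Spec(A_i \llbracket \pi \rrbracket) \to U_i$ from the restricted compatible system. These local lifts are forced to agree on overlaps because agreement can be checked after reduction modulo each $\pi^{\ell+1}$, where it holds by hypothesis; the same reasoning yields the injectivity of $\Phi_A$. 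Gluing the local lifts should produce the desired $\varphi \colon \Spec(A\llbracket \pi \rrbracket) \to \fX$.

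The main obstacle is making the final step rigorous: it is not immediate that the $\Spec(A_i \llbracket \pi \rrbracket)$ assemble into a Zariski open cover of $\Spec(A\llbracket \pi \rrbracket)$, because the latter carries generic and height-one points that are not visible on the special fiber $\Spec(A)$, and for non-Noetherian $A$ the behavior of these extra points must be tracked carefully. This is precisely where Bhatt's input is essential: it reduces the general representability problem to the affine base case via a Beauville--Laszlo-type descent, thereby completing the argument.
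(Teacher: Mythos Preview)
The paper does not give its own proof of this statement: it is stated as a theorem of Bhatt and simply referenced to \cite[Theorem 1.1]{Bhatt} (with a pointer to \cite[Chapter 4, Proposition 3.1.7]{ChambertLoirNicaiseSebag} for the Greenberg-scheme formulation). There is therefore nothing in the paper to compare your argument against beyond the citation itself.

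That said, your sketch is an accurate account of why the result is nontrivial and where the real work lies. The affine case is indeed elementary, and you correctly identify the obstruction in the gluing step: an affine open cover $\{\Spec(A_i)\}$ of $\Spec(A)$ does \emph{not} in general yield a cover of $\Spec(A\llbracket\pi\rrbracket)$ by the $\Spec(A_i\llbracket\pi\rrbracket)$, so the naive gluing argument breaks down for non-Noetherian $A$. Your final paragraph is honest about this and defers to Bhatt's result, which is exactly what the paper does. In effect your proposal is not so much a proof as a reduction to Bhatt's theorem together with an explanation of why that theorem is needed; given that the paper itself offers no more than the citation, this is entirely appropriate.
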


A subset of $\sG(\fX)$ is called a \emph{cylinder} if it is the preimage, under $\theta_\ell$, of a constructible subset of $\sG_\ell(\fX)$ for some $\ell \in \Z_{\geq 0}$. We will let $\mu_\fX$ denote the motivic measure on $\sG(\fX)$, which assigns a motivic volume in $\sM_{\fX_0}$ to each cylinder.

Suppose $f$ is a regular function on $\fX$. If $x \in \sG(\fX)$ has residue field $k(x)$, then it corresponds to an $R$-morphism $\psi_x: \Spec(k(x)\llbracket \pi \rrbracket) \to \fX$, and we will let $f(x)$ denote $f(\psi_x) \in k(x)\llbracket \pi \rrbracket$. As for arc schemes, this is used to define the \emph{order} and \emph{angular component} of $f$ at $x$ and the order function $\ord_f: \sG(\fX) \to \Z_{\geq 0} \cup \{\infty\}$.

Now suppose $G$ is a finite abelian group acting on $R$, and suppose that each element of $G$ acts on $R$ by a $\pi$-adically continuous $k$-algebra morphism. Endow $\fX$ with a good $G$-action making the structure morphism $G$-equivariant, and endow $\fX_0$ with the restriction of the $G$-action on $\fX$. The $G$-action on $\fX$ induces good $G$-actions on $\sG(\fX)$ and each $\sG_\ell(\fX)$. We refer to \cite[Section 3.2]{Hartmann} for the construction and properties of these $G$-actions on the Greenberg schemes. We will let $\mu_{\fX}^G$ denote the $G$-equivariant motivic measure on $\sG(\fX)$, which assigns a motivic volume in $\sM_{\fX_0}^G$ to each $G$-invariant cylinder in $\sG(\fX)$. We refer to \cite[Section 4.2]{Hartmann} for the definition of $\mu_{\fX}^G$.

If $A \subset \sG(\fX)$ is a $G$-invariant cylinder and $\alpha: A \to \Z$ is a function whose fibers are $G$-invariant cylinders, then the integral of $\alpha$ is defined to be
\[
	\int_A \bL^{-\alpha} \dmu_{\fX}^G = \sum_{\ell \in \Z} \mu_\fX^G(\alpha^{-1}(\ell)) \bL^{-\ell} \in \sM_{\fX_0}^G.
\]

\begin{remark}
By the quasi-compactness of the construcible topology, $\alpha$ takes finitely many values, so the above sum is well defined. See \cite[Chaper 6, Section 1.2]{ChambertLoirNicaiseSebag}.
\end{remark}

We now state the equivariant version of the motivic change of variables formula \cite[Theorem 4.18]{Hartmann}. If $h: \fY \to \fX$ is a morphism of $R$-schemes, then we let $\ordjac_h: \sG(\fY) \to \Z_{\geq 0} \cup \{\infty\}$ denote the order function of the jacobian ideal of $h$.

\begin{theorem}[Hartmann]
Suppose $\# G$ is not divisible by the characteristic of $k$. Let $\fX, \fY$ be smooth, pure relative dimensional, separated, finite type $R$-schemes with good $G$-action making the structure morphisms equivariant, and let $h: \fY \to \fX$ be a $G$-equivariant morphism that induces an open immersion on generic fibers. Let $A,B$ be $G$-invariant cylinders in $\sG(\fX), \sG(\fY)$, respectively, such that $h$ induces a bijection $B(k') \to A(k')$ for all extensions $k'$ of $k$.

If $\alpha: A \to \Z$ is a function whose fibers are $G$-invariant cylinders, then $\alpha \circ \sG(h) - \ordjac_h: B \to \Z$ is a function whose fibers are $G$-invariant cylinders, and
\[
	\int_A \bL^{-\alpha} \dmu_{\fX}^G = \int_B \bL^{-(\alpha \circ \sG(h) + \ordjac_h)} \dmu_{\fY}^G \in \sM_{\fX_0}^G.
\]
\end{theorem}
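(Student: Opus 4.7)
The plan is to reduce the equivariant change of variables to the non-equivariant statement of Sebag and then upgrade each intermediate equality to one in $\sM^G_{\fX_0}$ by controlling the linear $G$-action on the fibers of the truncated map $\sG_\ell(h)$. First, I stratify: since $\alpha$ has finitely many values on $A$ and each level set is a $G$-invariant cylinder, and since the level sets of $\ordjac_h$ on $\sG(\fY)$ are $G$-invariant cylinders by $G$-equivariance of $h$, it suffices to prove, for each $G$-invariant cylinder $C \subseteq A$ on which $\ordjac_h$ is constantly equal to some $e \in \Z_{\geq 0}$ on $\sG(h)^{-1}(C)$, the pointwise identity $\mu^G_\fX(C) = \bL^{-e}\, \mu^G_\fY(\sG(h)^{-1}(C))$ in $\sM^G_{\fX_0}$.

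Next I invoke the Greenberg-theoretic fiber description underlying the non-equivariant change of variables theorem: for $\ell$ sufficiently large (say $\ell \geq 2e$) the truncated map $\sG_\ell(h)$ restricts to a morphism $\theta_\ell(\sG(h)^{-1}(C)) \to \theta_\ell(C)$ that is piecewise Zariski-locally a trivial affine $\bA^e_k$-bundle, arising from the Taylor expansion of $h$ at an $\ell$-jet. Because $h$ is $G$-equivariant, this upgrades to an assertion about $G$-varieties: the bundle is $G$-equivariant, and its fibers, being quotients of tangent spaces of $\fY$ by the image of $dh$, inherit a linear $G$-action. After a further $G$-invariant stratification of $\theta_\ell(C)$ by the isomorphism type of this representation, each piece becomes a $G$-equivariant affine bundle modeled on a fixed $G$-representation on $\bA^e_k$.

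The crucial equivariant input is then the identity $[E/\fX_0, G] = \bL^e \cdot [T/\fX_0, G]$ in $\sM^G_{\fX_0}$ for any $G$-equivariant, piecewise Zariski-locally trivial rank-$e$ affine bundle $E \to T$ with linear fibers over a $G$-scheme $T$ that maps $G$-equivariantly to $\fX_0$. By the local triviality this reduces to the trivial bundle case $\bA^e_k \times_k T \to T$, where $\bA^e_k$ carries a fixed $G$-representation $\rho$ and $G$ acts diagonally. Since $\#G$ is invertible in $k$, $\rho$ decomposes as a sum of characters, reducing further to $e=1$; and for a single character, the stratification $\bA^1_k = \{0\} \sqcup \bG_{m,k}$ together with the observation that the $\bG_m$-torsor associated to the character becomes trivial after inverting $\bL$ (an equivariant Kummer-type argument) gives the identity. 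Summing this over the jacobian and $\alpha$ strata and combining with the definitions of $\mu^G_\fX$ and $\mu^G_\fY$ yields the theorem.

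The main obstacle is the equivariant trivialization of affine fibers in $\sM^G_{\fX_0}$. In the non-equivariant setting the factor $\bL^e$ is a formal consequence of Zariski-local triviality, but in the equivariant setting one must treat nontrivial $G$-representations on the fibers and do so consistently as the representation varies over the base. The tameness hypothesis that $\#G$ is coprime to $\mathrm{char}\,k$ enters precisely here, through the character decomposition and the trivialization of character torsors after inverting $\bL$; without it, the equivariant class of $\bA^e_k$ with a linear $G$-action would not in general collapse to $\bL^e$, and the entire change of variables formula would fail in $\sM^G_{\fX_0}$.
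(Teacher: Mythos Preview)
The paper does not prove this statement: it is quoted verbatim as Hartmann's equivariant change of variables formula \cite[Theorem 4.18]{Hartmann}, and the only comment the paper adds is the remark that Hartmann stated it for $A=\sG(\fX)$, $B=\sG(\fY)$, while the same argument goes through for $G$-invariant cylinders. So there is no ``paper's own proof'' to compare against; you have written a sketch of a proof where the paper simply cites one.

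That said, your outline is broadly the right shape for how such a result is established: reduce to strata where $\alpha$ and $\ordjac_h$ are constant, then use that for $\ell\gg e$ the truncated map $\sG_\ell(h)$ is a $G$-equivariant piecewise-trivial $\bA^e$-bundle over its image, with $G$ acting linearly on the fibers. The one point where your sketch overshoots is the paragraph on the ``crucial equivariant input'' $[E/\fX_0,G]=\bL^e[T/\fX_0,G]$. In Hartmann's setup (which this paper adopts by referring to \cite[Definition~4.1]{Hartmann}), the equivariant Grothendieck ring is \emph{defined} with the relation that a $G$-equivariant affine bundle with affine $G$-action on the fibers has class $\bL^e$ times the base. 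So your proposed derivation via character decomposition and a ``Kummer-type'' trivialization of $\bG_m$-torsors is neither needed nor, as written, complete: the last step is asserted rather than argued, and in any case the relation is already available by definition. The tameness hypothesis on $\#G$ is used elsewhere in Hartmann's argument (to guarantee that the induced actions on jet and Greenberg schemes are good and that the relevant quotients and stratifications behave), not to prove the affine-bundle relation.
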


\begin{remark}
Hartmann stated the formula when $A = \sG(\fX)$ and $B = \sG(\fY)$, but the same proof works when replacing $\sG(\fX)$ and $\sG(\fY)$ with $G$-invariant cylinders. See for example the proof of the non-equivariant version in \cite{ChambertLoirNicaiseSebag}.
\end{remark}

We note that for all $\ell \in \Z_{>0}$, the characteristic of $k$ never divides $\# \mu_\ell$.

\subsection{Linear subspaces and matroids}
\label{linearsubspacesandmatroidssubsection}

Let $d, n \in \Z_{>0}$. We will let $\Gr_{d,n}$ denote the Grassmannian of $d$-dimensional linear subspaces in $\bA_k^n = \Spec(k[x_1, \dots, x_n])$. We will let $\bG_{m,k}^n = \Spec(k[x_1^{\pm 1}, \dots, x_n^{\pm 1}]) \subset \bA_k^n$ denote the complement of the coordinate hyperplanes, and we will let $V(x_1 \cdots x_n -1)$ denote the closed subscheme of $\bA_k^n$ defined by $(x_1 \cdots x_n -1)$. For each $\cA \in \Gr_{d,n}(k)$, we will let $X_\cA \hookrightarrow \bA_k^n$ denote the corresponding linear subspace. If $X_\cA$ is not contained in a coordinate hyperplane of $\bA_k^n$, then the restrictions to $X_\cA$ of the coordinates $x_i$ define a central essential hyperplane arrangement in $X_\cA$. We let $U_\cA = X_\cA \cap \bG_{m,k}^n$ and $F_\cA = X_\cA \cap V(x_1 \cdots x_n -1)$ denote this arrangement's complement and Milnor fiber, respectively, and we endow $F_\cA$ with the restriction of the $\mu_n$-action on $\bA_k^n$ where each $\xi \in \mu_n$ acts by scalar multiplication. In the context of tropical geometry, we will consider both $U_\cA$ and $F_\cA$ as closed subschemes of the algebraic torus $\bG_{m,k}^n$. We will let $Z_{\cA}^{\hat\mu}(T) \in \sM^{\hat\mu}_{X_\cA}\llbracket T \rrbracket$ and $Z_{\cA}^{\naive}(T) \in \sM_{X_\cA} \llbracket T \rrbracket$ denote the Denef-Loeser motivic zeta function and the motivic Igusa zeta function, respectively, of the restriction of $(x_1 \cdots x_n)$ to $X_\cA$. We will let $Z^{\hat\mu}_{\cA, k}(T) \in \sM_k^{\hat\mu} \llbracket T \rrbracket$ (resp. $Z^{\naive}_{\cA, k}(T) \in \sM_k \llbracket T \rrbracket$) denote the power series obtained by pushing forward each coefficient of $Z_{\cA}^{\hat\mu}(T)$ (resp. $Z_{\cA}^{\naive}(T)$) along the structure morphism of $X_\cA$. We will let $Z^{\hat\mu}_{\cA, 0}(T) \in \sM_k^{\hat\mu} \llbracket T \rrbracket$ (resp. $Z^{\naive}_{\cA, 0}(T) \in \sM_k \llbracket T \rrbracket$) denote the power series obtained by pulling back each coefficient of $Z_{\cA}^{\hat\mu}(T)$ (resp. $Z_{\cA}^{\naive}(T)$) along the inclusion of the origin into $X_\cA$.

\begin{remark}
The zeta functions $Z_{\cA,k}^{\hat\mu}(T), Z_{\cA,k}^\naive(T), Z_{\cA,0}^{\hat\mu}(T)$, and $Z_{\cA, 0}^\naive(T)$ are as denoted in the introduction of this paper.
\end{remark}

Let $\cM$ be a rank $d$ loop-free matroid on $\{1, \dots, n\}$. We will let $\chi_\cM(\bL) \in \sM_k$ denote the characteristic polynomial of $\cM$ evaluated at $\bL$, so
\[
	\chi_\cM(\bL) = \sum_{I \subset \{1, \dots, n\}} (-1)^{\# I} \bL^{d-\rk I} \in \sM_k,
\]
where $\rk I$ is the rank function of $\cM$ applied to $I$. We will let $\cB(\cM)$ denote the set of bases of $\cM$, and we will let $\wt_\cM: \R^n \to \R$ denote the function $(w_1, \dots, w_n) \mapsto \max_{B \in \cB(\cM)} \sum_{i \in B} w_i$. For each $w = (w_1, \dots, w_n) \in \R^n$, we will set
\[
	\cB(\cM_w) = \{ B \in \cB(\cM) \, | \, \sum_{i \in B} w_i = \wt_\cM(w)\}.
\]
Then $\cB(\cM_w)$ is the set of bases for a rank $d$ matroid on $\{1, \dots, n\}$, and we will let $\cM_w$ denote that matroid. We let $\Trop(\cM)$ denote the Bergman fan of $\cM$, so
\[
	\Trop(\cM) = \{w \in \R^n \, | \, \text{$\cM_w$ is loop-free}\}.
\]
We will let $\Gr_\cM \subset \Gr_{d,n}$ denote the locus parametrizing linear subspaces whose associated hyperplane arrangements have combinatorial type $\cM$. For all $\cA \in \Gr_\cM(k)$, the fact that $\cM$ is loop-free implies that $X_\cA$ is not contained in a coordinate hyperplane. Note that if $\cA \in \Gr_\cM(k)$, then $[U_\cA] = \chi_\cM(\bL) \in \sM_k$ and
\[
	\Trop(U_\cA) = \{w \in \R^n \, | \, \init_w U_\cA \neq \emptyset\} = \Trop(\cM).
\]
For each $\cA \in \Gr_\cM(k)$ and each $w \in \Trop(\cM)$, we will let $\cA_w \in \Gr_{\cM_w}(k)$ denote the unique point such that $\init_w U_\cA = U_{\cA_w}$.

Before concluding the preliminaries, we recall two propositions proved in \cite{KutlerUsatine} that will be used in Section \ref*{motiviczetafunctionsprovingmainresultsection}. If $B \in \cB(\cM)$ and $i \in \{1, \dots, n\} \setminus B$, then we will let $C(\cM, i, B)$ denote the fundamental circuit in $\cM$ of $B$ with respect to $i$, so $C(\cM, i, B)$ is the unique circuit in $\cM$ contained in $B \cup \{i\}$. For each circuit $C$ in $\cM$ and each $\cA \in \Gr_\cM(k)$, we will let $L_C^\cA \in k[x_1, \dots, x_n]$ denote a linear form in the ideal defining $X_\cA$ in $\bA_k^n$ such that the coefficient of $x_i$ in $L_C^\cA$ is nonzero if and only if $i \in C$. Such an $L_C^\cA$ exists and is unique up to scaling by a unit in $k$. Once and for all, we fix such an $L_C^\cA$ for all $C$ and $\cA$.

\begin{proposition}[Proposition 3.6 in \cite{KutlerUsatine}]
\label{gbasisforlinearsubspace}
Let $\cA \in \Gr_\cM(k)$, let $w \in \R^n$, and let $B \in \cB(\cM_w)$. Then
\[
	\{L_{C(\cM, i,B)}^\cA \, | \, i \in \{1, \dots, n\} \setminus B\} \subset k[x_1, \dots, x_n]
\]
generates the ideal of $X_\cA$ in $\bA_k^n$, and
\[
	\{ \init_w L_{C(\cM, i, B)}^\cA \, | \, i \in \{1, \dots, n\} \setminus B\} \subset k[x_1^{\pm}, \dots, x_n^{\pm}]
\]
generates the ideal of $\init_w U_{\cA}$ in $\bG_{m,k}^n$.
\end{proposition}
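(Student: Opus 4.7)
The plan is to handle the two assertions separately, deriving the second from the first via a matroid-theoretic identification of the supports of the initial forms.

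For the first assertion, the approach is a dimension count together with a diagonal linear independence argument. Since $B \in \cB(\cM_w)$ means in particular that $B$ is a basis of $\cM$, the set $\{1, \dots, n\} \setminus B$ has cardinality exactly $n - d$, which matches the codimension of $X_\cA$ in $\bA_k^n$. By the defining property of fundamental circuits, $C(\cM, i, B) \subseteq B \cup \{i\}$, so the support of $L_{C(\cM, i, B)}^\cA$ has at most one index outside $B$, namely $i$, at which the coefficient is nonzero. Therefore, the $(n - d) \times (n - d)$ submatrix of coefficients indexed by $\{x_{i'} : i' \notin B\}$ is diagonal with nonzero entries on the diagonal. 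This forces the $n - d$ forms to be linearly independent. Since they all lie in the $(n - d)$-dimensional space of linear forms vanishing on $X_\cA$, they span it, and thus generate the ideal of the linear subspace $X_\cA$.

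For the second assertion, the key input is the following identification of supports, which is a standard consequence of the definition of $\cM_w$ together with the basis exchange axiom: for each $i \in \{1, \dots, n\} \setminus B$, the set $\{j \in C(\cM, i, B) : w_j = \max_{j' \in C(\cM, i, B)} w_{j'}\}$ equals the fundamental circuit $C(\cM_w, i, B)$ in the initial matroid. Given this, the support of $\init_w L_{C(\cM, i, B)}^\cA$ is exactly $C(\cM_w, i, B)$, with all coefficients on that support nonzero (they are inherited from $L_{C(\cM, i, B)}^\cA$). By the uniqueness up to scalar of a linear form in $I(X_{\cA_w})$ with support on a given circuit, $\init_w L_{C(\cM, i, B)}^\cA$ is a nonzero scalar multiple of $L_{C(\cM_w, i, B)}^{\cA_w}$. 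Applying the first assertion to the triple $(\cM_w, \cA_w, B)$ shows that the forms $\{L_{C(\cM_w, i, B)}^{\cA_w} : i \notin B\}$ generate the ideal of $X_{\cA_w}$ in $\bA_k^n$, and since $\init_w U_\cA = X_{\cA_w} \cap \bG_{m, k}^n$, localizing to the torus yields the claim.

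The main obstacle I expect is the supports claim in the second part; although it is a routine matroid fact, care is needed with the max/min conventions between the tropical setup and the definition of $\wt_\cM$. Once that identification is in hand, everything reduces to the linear-algebra content of the first part.
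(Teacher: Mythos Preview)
The paper does not prove this proposition; it is quoted verbatim from \cite{KutlerUsatine} (Proposition~3.6 there), so there is no in-paper argument to compare against. Your proposal is, on its own terms, essentially correct and is the natural linear-algebra argument. Two small points deserve comment.

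First, under the paper's conventions $\trop(f)(w)$ is a minimum and $\init_w$ selects the terms of minimal $w$-weight, so in your support identification $\max$ should be $\min$. With that fix, the claimed identity
\[
\{\,j \in C(\cM,i,B)\ :\ w_j = \min_{j'\in C(\cM,i,B)} w_{j'}\,\}=C(\cM_w,i,B)
\]
is exactly the computation that for $j\in B$ one has $j\in C(\cM_w,i,B)$ iff $(B\setminus\{j\})\cup\{i\}\in\cB(\cM_w)$ iff $(B\setminus\{j\})\cup\{i\}\in\cB(\cM)$ and $w_j=w_i$, combined with \autoref{wmaximalextrahasminimalweight}. In fact you do not even need the full support identification: since each $\init_w L_{C(\cM,i,B)}^\cA$ has support contained in $B\cup\{i\}$ with nonzero $x_i$-coefficient (again by \autoref{wmaximalextrahasminimalweight}), the same diagonal argument from your first part already shows the $n-d$ initial forms are linearly independent, and then one concludes as you do.

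Second, your reduction to the triple $(\cM_w,\cA_w,B)$ invokes $\cA_w$, which the paper defines only for $w\in\Trop(\cM)$, whereas the proposition is stated for arbitrary $w\in\R^n$. When $w\notin\Trop(\cM)$ you must instead show that the initial forms generate the unit ideal of $k[x_1^{\pm 1},\dots,x_n^{\pm 1}]$. This is immediate from your support identity: if $j$ is a loop of $\cM_w$ then $j\notin B$ and $C(\cM_w,j,B)=\{j\}$, so $\init_w L^{\cA}_{C(\cM,j,B)}$ is a nonzero scalar multiple of $x_j$, hence a unit in the Laurent ring. Only the case $w\in\Trop(\cM)$ is actually used later in the paper, so this gap is harmless for the applications here, but it is worth noting.
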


\begin{proposition}[Proposition 3.2 in \cite{KutlerUsatine}]
\label{wmaximalextrahasminimalweight}
Let $w = (w_1, \dots, w_n) \in \R^n$, let $B \in \cB(\cM_w)$, and let $i \in \{1, \dots, n\} \setminus B$. Then
\[
	\min_{j \in C(\cM,i,B)} w_j = w_i.
\]
\end{proposition}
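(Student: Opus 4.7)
The plan is to derive the equality from the strong circuit exchange axiom combined with the fact that $B$ has maximum $w$-weight among bases of $\cM$. First I would note that, by definition, $C(\cM, i, B)$ is a circuit contained in $B \cup \{i\}$, so every $j \in C(\cM, i, B) \setminus \{i\}$ lies in $B$. The strong circuit exchange property then ensures that for each such $j$, the set $B' := (B \setminus \{j\}) \cup \{i\}$ is again a basis of $\cM$.

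Next, I would use that $B \in \cB(\cM_w)$ means $\sum_{k \in B} w_k = \wt_\cM(w) = \max_{B'' \in \cB(\cM)} \sum_{k \in B''} w_k$. Applying this maximality to the basis $B'$ yields
\[
\sum_{k \in B} w_k \;\geq\; \sum_{k \in B'} w_k \;=\; \sum_{k \in B} w_k - w_j + w_i,
\]
and therefore $w_i \leq w_j$ for every $j \in C(\cM, i, B) \setminus \{i\}$. Since $i$ itself lies in $C(\cM, i, B)$, the overall minimum $\min_{j \in C(\cM, i, B)} w_j$ is achieved at $j = i$ and equals $w_i$.

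There is essentially no obstacle: the argument is a two-line application of strong circuit exchange together with the maximum-weight definition of $\cB(\cM_w)$. The only care needed is to invoke the \emph{strong} exchange axiom (which applies to every element of $C(\cM, i, B) \setminus \{i\}$) rather than the weaker basis exchange axiom (which would only guarantee the swap for some unspecified element of $B$).
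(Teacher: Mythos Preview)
Your argument is correct. The paper itself does not supply a proof of this proposition; it merely quotes it as Proposition~3.2 of \cite{KutlerUsatine} in the preliminaries, so there is no in-paper proof to compare against. What you have written is exactly the standard argument one would expect: the key point is that $B \cup \{i\}$ contains the unique circuit $C(\cM,i,B)$, so deleting any $j \in C(\cM,i,B)$ from $B \cup \{i\}$ yields an independent set of size $|B|$, hence a basis, and then the $w$-maximality of $B$ forces $w_j \geq w_i$. Your caution about needing the exchange for \emph{every} $j$ in the fundamental circuit (not merely some $j \in B$) is well placed, though note that this fact follows immediately from the uniqueness of the fundamental circuit rather than requiring a separately named ``strong'' axiom.
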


For additional information on matroids and the tropical geometry of linear subspaces, we refer to \cite[Chapter 4]{MaclaganSturmfels}.

\section{Equivariant motivic integration and the motivic zeta function}

Let $\ell \in \Z_{>0}$, and throughout this section, endow $R$ with the $\mu_\ell$-action where each $\xi \in \mu_{\ell}$ acts on $R$ by the $\pi$-adically continuous $k$-morphism $\pi \mapsto \xi^{-1}\pi$.

Let $X$ be a smooth, pure dimensional, finite type, separated scheme over $k$. We will endow $\sL(X)$ and each $\sL_m(X)$ with $\mu_\ell$-actions that make the truncation morphisms $\mu_\ell$-equivariant as follows. Let $\xi \in \mu_\ell$, let $A$ be a $k$-algebra, let $\xi_{A\llbracket \pi \rrbracket}: \Spec(A\llbracket \pi \rrbracket) \to \Spec(A \llbracket \pi \rrbracket)$ be the morphism whose pullback is the $\pi$-adically continuous $A$-algebra morphism $\pi \mapsto \xi^{-1}\pi$, and let $\xi_{A[ \pi ]/(\pi^{n+1})}: \Spec(A[ \pi ]/(\pi^{n+1})) \to \Spec(A[ \pi ]/(\pi^{n+1}))$ be the morphism whose pullback is the $A$-algebra morphism $\pi \mapsto \xi^{-1}\pi$.

If $x \in \sL(X)(A)$ corresponds to a $k$-morphism 
\[
	\psi_x: \Spec(A \llbracket \pi \rrbracket) \to X,
\]
then let $\xi \cdot x \in \sL(X)(A)$ correspond to the $k$-morphism 
\[
	\psi_x \circ \xi_{A \llbracket \pi \rrbracket}^{-1}: \Spec(A \llbracket \pi \rrbracket) \to X.
\]
This action is clearly functorial in $A$, so it defines a $\mu_\ell$-action on $\sL(X)$. Similarly, if $x \in \sL_m(X)(A)$ corresponds to a $k$-morphism 
\[
	\psi_x: \Spec(A [ \pi ]/(\pi^{m+1})) \to X,
\]
then let $\xi \cdot x \in \sL_m(X)(A)$ correspond to the $k$-morphism 
\[
	\psi_x \circ \xi_{A [ \pi ]/(\pi^{m+1})}^{-1}: \Spec(A [ \pi ]/(\pi^{m+1})) \to X.
\] 
This action is also functorial in $A$, so it defines a $\mu_\ell$-action on $\sL_m(X)$. We also see that these $\mu_\ell$-actions make the truncation morphisms $\mu_\ell$-equivariant.

\begin{proposition}
\label{orderangularcomponentarcscheme}
Let $f$ be a regular function on $X$. Then $f$ has constant order on any $\mu_\ell$-orbit of $\sL(X)$. Furthermore, $f$ has constant angular component on any $\mu_\ell$-orbit of $\sL(X)$ on which $f$ has order $\ell$.
\end{proposition}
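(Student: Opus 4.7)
The plan is to compute $f$ evaluated at $\xi\cdot x$ directly from the definition of the $\mu_\ell$-action on $\sL(X)$, and then read off the order and angular component. Since the two statements are formal statements about the $\pi$-adic valuations and leading coefficients of power series in $k(x)\llbracket\pi\rrbracket$, everything reduces to working point-by-point; it suffices to consider a point $x \in \sL(X)$ with residue field $K = k(x)$, the corresponding $k$-morphism $\psi_x \colon \Spec(K\llbracket\pi\rrbracket) \to X$, and an element $\xi \in \mu_\ell$.

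The first step is to unwind the action. By construction, the point $\xi\cdot x$ corresponds to the morphism $\psi_x \circ \xi_{K\llbracket\pi\rrbracket}^{-1}$. Since the pullback of $\xi_{K\llbracket\pi\rrbracket}$ is the $K$-algebra map $\pi \mapsto \xi^{-1}\pi$, the pullback of $\xi_{K\llbracket\pi\rrbracket}^{-1}$ is $\pi \mapsto \xi\pi$. Identifying $k(\xi\cdot x)$ with $K$ via the $k$-scheme automorphism induced by $\xi$, I conclude
\[
    f(\xi\cdot x) = f(x)\big|_{\pi \mapsto \xi\pi} \in K\llbracket\pi\rrbracket.
\]
Explicitly, if $f(x) = \sum_{i\geq 0} a_i \pi^i$ with $a_i \in K$, then $f(\xi\cdot x) = \sum_{i\geq 0} a_i \xi^i \pi^i$.

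The two conclusions are then immediate. For the first: because each $\xi^i$ is a unit in $K$, the set $\{i : a_i \xi^i \neq 0\}$ coincides with $\{i : a_i \neq 0\}$, so $\ord_f(\xi\cdot x) = \ord_f(x)$. For the second: if this common order equals $\ell$, then the leading coefficient of $f(\xi\cdot x)$ is $a_\ell \xi^\ell = a_\ell$ because $\xi^\ell = 1$, and $a_\ell$ is precisely the angular component of $f$ at $x$.

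I do not expect any real obstacle here; the only place that might require attention in writing is the identification $k(\xi\cdot x) \cong k(x)$ through which the comparison of angular components is made, which is forced by the fact that $\mu_\ell$ acts by $k$-scheme automorphisms of $\sL(X)$.
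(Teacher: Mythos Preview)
Your proof is correct and follows essentially the same approach as the paper's: both unwind the $\mu_\ell$-action on a point to compute $f(\xi\cdot x) = \sum_{i\geq 0} a_i\xi^i\pi^i$ from $f(x) = \sum_{i\geq 0} a_i\pi^i$, and then read off the order and the angular component. The only cosmetic difference is that the paper phrases things in terms of $k'$-points for field extensions $k'/k$ rather than scheme-theoretic points with residue field $k(x)$, and your remark about the identification $k(\xi\cdot x)\cong k(x)$ is a small extra care not made explicit in the paper.
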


\begin{proof}
Let $\xi \in \mu_\ell$, let $\xi_{\sL(X)}: \sL(X) \to \sL(X)$ be its action on $\sL(X)$, let $x \in \sL(X)(k')$ for some extension $k'$ of $k$, let $R' = k' \llbracket \pi \rrbracket$, and let $\xi_{R'}: \Spec(R') \to \Spec(R')$ be the morphism whose pullback is the $\pi$-adically continuous $k'$-algebra morphism $\pi \mapsto \xi^{-1}\pi$. Then $x$ corresponds to a $k$-morphism
\[
	\psi_x: \Spec(R') \to X,
\]
and $\xi_{\sL(X)}(x) \in \sL(X)(k')$ corresponds to the $k$-morphism
\[
	\psi_x \circ \xi_{R'}^{-1}: \Spec(R') \to X.
\]
Write
\[
	f(x) = f(\psi_x) = \sum_{i \geq 0} a_i \pi^i \in R',
\]
where each $a_i \in k'$. Then
\[
	f(\xi_{\sL(X)}(x)) = f(\psi_x \circ \xi_{R'}^{-1}) = \sum_{i \geq 0} a_i \xi^i \pi^i \in R'.
\]
Thus the order of $f(x)$ is equal to the order of $f(\xi_{\sL(X)}(x))$, and if $f(x)$ has order $\ell$, then the fact that $\xi ^\ell = 1$ implies that the angular component of $f(x)$ is equal to the angular component of $f(\xi_{\sL(X)}(x))$. Thus we are done.
\end{proof}

Let $\fX = X \times_k \Spec(R)$ and endow $\fX$ with the $\mu_\ell$-action induced by the $\mu_\ell$-action on $R$ and the trivial $\mu_\ell$-action on $X$. Note that any open affine cover of $X$ induces an open cover of $\fX$ by $\mu_\ell$-invariant affines, so the $\mu_\ell$-action on $\fX$ is good. Composition with the projection $\fX \to X$ induces isomorphisms $\sG(\fX) \to \sL(X)$ and $\sG_m(\fX) \to \sL_m(X)$ that commute with the truncation morphisms.

\begin{proposition}
\label{equivariantisogreenbergarc}
The isomorphisms $\sG(\fX) \to \sL(X)$ and $\sG_m(\fX) \to \sL_m(X)$ are $\mu_\ell$-equivariant.
\end{proposition}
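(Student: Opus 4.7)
The plan is to verify equivariance at the level of functors of points by tracking through the definition of the Hartmann action on $\sG(\fX)$ and showing it matches, after projection onto $X$, the action on $\sL(X)$ defined earlier in the section. First, I would unwind Hartmann's $\mu_\ell$-action on $\sG(\fX)$: for $\xi \in \mu_\ell$ and a $k$-algebra $A$, if $x \in \sG(\fX)(A)$ corresponds to an $R$-morphism $\phi_x : \Spec(A\llbracket\pi\rrbracket) \to \fX$, then $\xi \cdot x$ corresponds to $\xi_\fX \circ \phi_x \circ \xi_{A\llbracket\pi\rrbracket}^{-1}$, where $\xi_\fX = \mathrm{id}_X \times \xi_{\Spec R}$ and $\xi_{A\llbracket\pi\rrbracket}$ is the automorphism dual to $\pi \mapsto \xi^{-1}\pi$. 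The composition with $\xi_{A\llbracket\pi\rrbracket}^{-1}$ is exactly what makes $\xi \cdot x$ an $R$-morphism again (a one-line check on the pullback $R \to A\llbracket\pi\rrbracket$), so the formula is consistent with the $R$-scheme structure used to define $\sG(\fX)$.

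Next, I would compute the image of $\xi \cdot x$ under the isomorphism $\sG(\fX) \to \sL(X)$, which on functors of points is just post-composition with the projection $p : \fX \to X$. So the image of $\xi \cdot x$ is
\[
    p \circ \xi_\fX \circ \phi_x \circ \xi_{A\llbracket\pi\rrbracket}^{-1}.
\]
The key observation is that since $\xi_\fX$ acts trivially on the $X$-factor, we have $p \circ \xi_\fX = p$. Therefore the image simplifies to $p \circ \phi_x \circ \xi_{A\llbracket\pi\rrbracket}^{-1} = \psi_x \circ \xi_{A\llbracket\pi\rrbracket}^{-1}$, which is precisely the definition of $\xi \cdot \psi_x$ given in the section for the action on $\sL(X)$. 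By the Yoneda lemma this gives the $\mu_\ell$-equivariance of $\sG(\fX) \to \sL(X)$.

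For the truncated version, the argument is verbatim the same after replacing $A\llbracket\pi\rrbracket$ by $A[\pi]/(\pi^{m+1})$ and $\xi_{A\llbracket\pi\rrbracket}$ by $\xi_{A[\pi]/(\pi^{m+1})}$; this is exactly how both actions were defined earlier in the section, so no additional input is needed. There is no real obstacle here beyond keeping the definitions straight: the proof is essentially a bookkeeping exercise that isolates the twist $\xi_{A\llbracket\pi\rrbracket}^{-1}$ as the only surviving piece of the action after projecting away the $\Spec R$-factor. The only point requiring any care is to confirm that Hartmann's action on $\sG(\fX)$ is the one described above (as opposed to $\xi_\fX \circ \phi_x$ without the twist), which can be seen either by direct reference to \cite[Section 3.2]{Hartmann} or by the observation that this is the unique way to produce an action on the functor $A \mapsto \Hom_R(\Spec A\llbracket\pi\rrbracket, \fX)$ from a semilinear action on $\fX$.
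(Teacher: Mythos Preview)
Your proof is correct and follows essentially the same approach as the paper: both arguments identify Hartmann's action on $\sG(\fX)$ (or $\sG_m(\fX)$) as $\xi_\fX \circ \phi_x \circ \xi^{-1}$, use that $\xi_\fX$ is trivial on the $X$-factor so that $p \circ \xi_\fX = p$, and conclude that the image under projection is $\psi_x \circ \xi^{-1}$, matching the action defined on $\sL(X)$. The only cosmetic difference is that the paper proves the truncated case first and passes to the inverse limit, whereas you do the full arc scheme first and remark that the truncated case is verbatim.
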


\begin{proof}
Let $m \in \Z_{\geq 0}$. It will be sufficient to show that the isomorphism $\sG_m(\fX) \to \sL_m(X)$ is $\mu_\ell$-equivariant, as we get the remainder of the proposition by taking inverse limit.

Let $\xi \in \mu_\ell$, let $\xi_\fX: \fX \to \fX$ be its action on $\fX$, and let $\xi_{\sG_m(\fX)}: \sG_m(\fX) \to \sG_m(\fX)$ be its action on $\sG_m(\fX)$.

Let $x \in \sG_m(\fX)(A)$ for some $k$-algebra $A$, and let 
\[
	\xi_{A [ \pi ]/(\pi^{m+1})}: \Spec(A [ \pi ]/(\pi^{m+1})) \to \Spec(A [ \pi ]/(\pi^{m+1}))
\]
be the morphism whose pullback is the $A$-algebra morphism $\pi \mapsto \xi^{-1} \pi$. Then $x$ corresponds to an $R$-morphism 
\[
	\psi_x: \Spec(A [ \pi ]/(\pi^{m+1})) \to \fX,
\]
and $\xi_{\sG_m(\fX)}(x) \in \sG_m(\fX)(A)$ corresponds to the $R$-morphism
\[
	\xi_{\fX} \circ \psi_x \circ \xi_{A [ \pi ]/(\pi^{m+1})}^{-1}: \Spec(A [ \pi ]/(\pi^{m+1})) \to \fX.
\]
Because $\xi_{\fX}$ is trivial on the factor $X$, we get that the composition of the above morphism with the projection $\fX \to X$ is equal to the composition of
\[
	\psi_x \circ \xi_{A [ \pi ]/(\pi^{m+1})}^{-1}: \Spec(A [ \pi ]/(\pi^{m+1})) \to \fX
\]
with the projection $\fX \to X$. Thus the proposition follows by our definition of the $\mu_\ell$-action on $\sL_m(X)$.
\end{proof}

\begin{proposition}
\label{orderangularcomponentgreenbergscheme}
Let $f$ be a regular function on $\fX$ obtained by pulling back a regular function on $X$ along the projection $\fX \to X$. Then $f$ has constant order on any $\mu_\ell$-orbit of $\sG(\fX)$. Furthermore, $f$ has constant angular component on any $\mu_\ell$-orbit of $\sG(\fX)$ on which $f$ has order $\ell$.
\end{proposition}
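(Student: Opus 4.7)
The plan is to reduce this proposition to \autoref{orderangularcomponentarcscheme} via the $\mu_\ell$-equivariant isomorphism $\sG(\fX) \to \sL(X)$ established in \autoref{equivariantisogreenbergarc}. The key observation is that if $f$ is pulled back from a regular function $g$ on $X$ along the projection $p: \fX \to X$, then the order and angular component of $f$ at a point $x \in \sG(\fX)$ coincide with those of $g$ at the image of $x$ in $\sL(X)$.

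More precisely, I would first note that for any extension $k'$ of $k$ and any $x \in \sG(\fX)(k')$ corresponding to an $R$-morphism $\psi_x : \Spec(k'\llbracket \pi \rrbracket) \to \fX$, the image $\tilde x \in \sL(X)(k')$ under the identification $\sG(\fX) \to \sL(X)$ corresponds to the $k$-morphism $p \circ \psi_x : \Spec(k'\llbracket \pi \rrbracket) \to X$. Therefore
\[
    f(x) = f(\psi_x) = g(p \circ \psi_x) = g(\tilde x) \in k'\llbracket \pi \rrbracket,
\]
so the order (respectively angular component) of $f$ at $x$ equals the order (respectively angular component) of $g$ at $\tilde x$.

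Now let $x \in \sG(\fX)$ and $\xi \in \mu_\ell$, and let $\xi_{\sG(\fX)}$ and $\xi_{\sL(X)}$ denote the corresponding actions. By \autoref{equivariantisogreenbergarc}, the image of $\xi_{\sG(\fX)}(x)$ in $\sL(X)$ is $\xi_{\sL(X)}(\tilde x)$. Applying the identity from the previous paragraph to both $x$ and $\xi_{\sG(\fX)}(x)$, the claim reduces to: $g$ has the same order at $\tilde x$ and $\xi_{\sL(X)}(\tilde x)$, and if this common order equals $\ell$, then $g$ has the same angular component at both points. This is exactly \autoref{orderangularcomponentarcscheme} applied to the regular function $g$ on $X$.

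There is no genuine obstacle here; the proposition is essentially a formal consequence of the equivariance of the isomorphism $\sG(\fX) \cong \sL(X)$ together with the already-established arc-theoretic version. The only thing to be careful about is the compatibility of the order and angular component with the pullback along $p$, which is immediate because $p$ is the identity on the $X$-factor and $f$ is constant along the $\Spec(R)$-factor in the sense that it depends only on the $X$-component of a lift.
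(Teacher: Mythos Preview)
Your proposal is correct and follows exactly the same approach as the paper, which simply states that the result follows from Propositions~\ref{orderangularcomponentarcscheme} and~\ref{equivariantisogreenbergarc}. Your write-up just spells out explicitly the compatibility of order and angular component under the equivariant isomorphism $\sG(\fX)\to\sL(X)$, which is precisely the content the paper leaves implicit.
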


\begin{proof}
This follows from Propositions \ref*{orderangularcomponentarcscheme} and \ref*{equivariantisogreenbergarc}.
\end{proof}

Let $f$ be a regular function on $X$, let $Z_f^{\hat\mu}(T) \in \sM_{X}^{\hat\mu} \llbracket T \rrbracket$ denote the Denef-Loeser motivic zeta function of $f$, and let $Z_f^\naive(T) \in \sM_X \llbracket T \rrbracket$ denote the motivic Igusa zeta function of $f$. By slight abuse of notation, we will also let $f$ denote the regular function on $\fX$ obtained by pulling back $f$ along the projection $\fX \to X$.

\begin{proposition}
\label{coefficientDLfromvolume}
Let $A_{\ell,1} \subset \sG(\fX)$ be the subset of arcs where $f$ has order $\ell$ and angular component 1. Then $A_{\ell,1}$ is a $\mu_\ell$-invariant cylinder, and the coefficient of $T^\ell$ in $Z_f^{\hat\mu}(T)$ is equal to the image of $\mu_{\fX}^{\mu_\ell}(A_{\ell,1})$ in $\sM_X^{\hat\mu}$.
\end{proposition}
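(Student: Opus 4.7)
The plan is to identify $A_{\ell,1}$ with a cylinder at level $\ell$ whose image in $\sG_\ell(\fX) \cong \sL_\ell(X)$ is precisely $Y_{\ell,1}$, and then read off its $\mu_\ell$-equivariant motivic volume from the defining formula for cylinders.

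First I would observe that for any arc $x \in \sG(\fX)$, the two conditions ``$f$ has order $\ell$ at $x$'' and ``$f$ has angular component $1$ at $x$'' together amount to the single congruence $f(x) \equiv \pi^\ell \pmod{\pi^{\ell+1}}$, which depends only on $\theta_\ell(x)$. Hence $A_{\ell,1} = \theta_\ell^{-1}(\widetilde Y_{\ell,1})$, where $\widetilde Y_{\ell,1} \subset \sG_\ell(\fX)$ is the closed subscheme corresponding to $Y_{\ell,1} \subset \sL_\ell(X)$ under the isomorphism of \autoref*{equivariantisogreenbergarc}. In particular, $A_{\ell,1}$ is a cylinder, and \autoref*{orderangularcomponentgreenbergscheme} immediately gives its $\mu_\ell$-invariance.

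Next, by the defining formula for the equivariant motivic measure (\cite[Section 4.2]{Hartmann}) applied to a $\mu_\ell$-invariant cylinder that is a pullback from level $\ell$, I would conclude
\[
	\mu_\fX^{\mu_\ell}(A_{\ell,1}) = [\widetilde Y_{\ell,1}/X, \mu_\ell]\, \bL^{-(\ell+1)\dim X} \in \sM_X^{\mu_\ell},
\]
where $\fX_0$ is identified with $X$ in the obvious way and $\widetilde Y_{\ell,1}$ carries the $\mu_\ell$-action inherited from $\sG_\ell(\fX)$. Applying \autoref*{equivariantisogreenbergarc} a second time, this $\mu_\ell$-action corresponds under $\sG_\ell(\fX) \cong \sL_\ell(X)$ to the $\mu_\ell$-action on $Y_{\ell,1}$ used to define $Z_f^{\hat\mu}(T)$. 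Passing to the image in $\sM_X^{\hat\mu}$ then yields $[Y_{\ell,1}/X, \hat\mu]\, \bL^{-(\ell+1)\dim X}$, which is by definition the coefficient of $T^\ell$ in $Z_f^{\hat\mu}(T)$.

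The main subtlety is confirming that the $\mu_\ell$-action on $\sL_\ell(X)$ induced from the $\mu_\ell$-action on $R$ by $\pi \mapsto \xi^{-1}\pi$ (via the Greenberg scheme construction) matches the action used by Denef and Loeser (coming from $\pi \mapsto \xi\pi$ on $A\llbracket \pi \rrbracket$). This compatibility is exactly what the setup in Section~3, combined with \autoref*{equivariantisogreenbergarc}, establishes, so no additional verification is needed beyond tracking signs and inverses through the functor-of-points description.
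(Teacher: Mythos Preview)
Your proposal is correct and follows essentially the same route as the paper: both arguments identify $A_{\ell,1}$ with the pullback from level $\ell$ of (the Greenberg-side copy of) $Y_{\ell,1}$, invoke \autoref{orderangularcomponentgreenbergscheme} for $\mu_\ell$-invariance, compute the equivariant measure as $[Y_{\ell,1}/X,\mu_\ell]\,\bL^{-(\ell+1)\dim X}$, and then use \autoref{equivariantisogreenbergarc} to match the $\mu_\ell$-action with the one in the definition of $Z_f^{\hat\mu}(T)$. The only cosmetic difference is that the paper phrases the argument on the arc-scheme side via an auxiliary set $B_{\ell,1}\subset\sL(X)$ rather than naming $\widetilde Y_{\ell,1}$ directly, and your closing paragraph makes explicit the sign-matching of the two $\mu_\ell$-actions that the paper's proof uses implicitly.
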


\begin{proof}
Let $B_{\ell,1} \subset \sL(X)$ be the subset of arcs where $f$ has order $\ell$ and angular component 1, and let $Y_{\ell, 1}$ be the closed subscheme of $\sL_\ell(X)$ consisting of jets where $f$ is equal to $\pi^\ell$. Then $\theta_\ell(B_{\ell, 1}) = Y_{\ell, 1}$. By \autoref*{orderangularcomponentarcscheme}, $B_{\ell,1}$ is a $\mu_\ell$-invariant subset of $\sL(X)$, so because $\theta_\ell$ is $\mu_{\ell}$-equiviariant, we have that $Y_{\ell, 1}$ is a $\mu_\ell$-invariant subset of $\sL_\ell(X)$. Thus we may endow $Y_{\ell, 1}$ with the $\mu_\ell$-action given by restriction of the $\mu_\ell$-action on $\sL_\ell(X)$. By the definition of $Z^{\hat\mu}_f(T)$ and the $\mu_\ell$-action on $Y_{\ell, 1}$, the coefficient of $T^\ell$ in $Z_f^{\hat\mu}(T)$ is equal to
\[
	[Y_{\ell, 1} / X, \hat\mu] \bL^{-(\ell+1)\dim X} \in \sM_X^{\hat\mu}.
\]
But by the $\mu_\ell$-equivariant isomorphisms $\sG(\fX) \to \sL(X)$ and $\sG_\ell(\fX) \to \sL_\ell(X)$, the fact that the image of $A_{\ell, 1}$ under $\sG(\fX) \to \sL(X)$ is equal to $B_{\ell, 1}$, and the fact that $\theta_\ell^{-1}(Y_{\ell, 1}) = B_{\ell,1}$, we have that $A_{\ell,1}$ is a $\mu_\ell$-invariant cylinder and
\[
	\mu_{\fX}^{\mu_\ell}(A_{\ell,1}) = [Y_{\ell, 1} / X, \mu_\ell] \bL^{-(\ell+1)\dim X} \in \sM_X^{\mu_\ell},
\]
and we are done.
\end{proof}

\begin{proposition}
\label{coefficientIgfromvolume}
Let $A_\ell \subset \sG(\fX)$ be the subset of arcs where $f$ has order order $\ell$. Then $A_\ell$ is a cylinder and the coefficient of $T^\ell$ in $Z_f^\naive(T)$ is equal to $\mu_{\fX}(A_\ell)$.
\end{proposition}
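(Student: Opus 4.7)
The plan is to mimic the proof of \autoref*{coefficientDLfromvolume} but with the equivariant structure stripped away. The key observation is that via the projection $\fX = X \times_k \Spec(R) \to X$, the canonical isomorphism $\sG(\fX) \xrightarrow{\sim} \sL(X)$ (commuting with truncations $\theta_m$) carries the pulled back function $f$ on $\fX$ to the original function $f$ on $X$, so orders of $f$ are preserved pointwise.

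First I would let $B_\ell \subset \sL(X)$ be the subset of arcs on which $f$ has order $\ell$ and note that, by this compatibility of the isomorphism $\sG(\fX) \to \sL(X)$ with $f$, the subset $A_\ell$ maps bijectively onto $B_\ell$. Since $f$ has order exactly $\ell$ on an arc iff its image in $\sL_\ell(X)$ lies in a certain constructible subscheme $Y_\ell \subset \sL_\ell(X)$ (namely, where $f \equiv 0 \pmod{\pi^\ell}$ but $f \not\equiv 0 \pmod{\pi^{\ell+1}}$), we have $B_\ell = \theta_\ell^{-1}(Y_\ell)$, so $B_\ell$ is a cylinder in $\sL(X)$; pulling back along the isomorphism shows that $A_\ell = \theta_\ell^{-1}(Y_\ell)$ in $\sG(\fX)$ is likewise a cylinder.

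Next I would compare the two motivic measures. Because $\fX = X \times_k \Spec(R)$, the special fiber $\fX_0$ is canonically identified with $X$, the relative dimension of $\fX/R$ equals $\dim X$, and the truncation scheme $\sG_\ell(\fX)$ is identified with $\sL_\ell(X)$ as $X$-schemes. Hence the defining formulas for $\mu_\fX$ and $\mu_X$ on a cylinder $\theta_\ell^{-1}(Y_\ell)$ agree, giving
\[
    \mu_\fX(A_\ell) \;=\; [Y_\ell/X]\,\bL^{-(\ell+1)\dim X} \;=\; \mu_X(B_\ell) \in \sM_X.
\]
By the defining formula for $Z_f^{\naive}(T)$ recalled in the preliminaries, the coefficient of $T^\ell$ is exactly $\mu_X(B_\ell)$, which equals $\mu_\fX(A_\ell)$.

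There is no serious obstacle here; the only thing to check carefully is the naturality of the motivic measure under the identification $\sG(\fX) \cong \sL(X)$, and that amounts to matching the two defining formulas on a single truncation level, which is routine once one observes that the fiber product structure of $\fX$ makes $\sG_\ell(\fX)$ and $\sL_\ell(X)$ the same $X$-scheme.
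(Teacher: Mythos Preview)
Your argument is correct and is exactly the paper's approach: the paper's one-line proof says the proposition follows from the definition of $Z_f^{\naive}(T)$ together with the fact that the isomorphism $\sG(\fX)\to\sL(X)$ is cylinder- and volume-preserving, which is precisely what you have unpacked in detail.
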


\begin{proof}
This proposition follows from the definition of $Z_f^\naive(T)$ and the fact that the isomorphism $\sG(\fX) \to \sL(X)$ is cylinder and volume preserving.
\end{proof}

\section{Actions of the roots of unity on an algebraic torus}

Let $T$ be an algebraic torus over $k$ with character lattice $M$ and co-character lattice $N$, and for each $u \in M$, let $\chi^u \in k[M]$ denote the corresponding character on $T$. In this section, we will establish some notation and facts regarding certain actions, by the roots of unity, on the closed subschemes of $T$.

\begin{definition}
Let $\ell \in \Z_{> 0}$. Let $w \in N$, and $\bG_{m,k} \to T$ be the corresponding co-character. Then we define the \emph{$(\mu_\ell, w)$-action} to be the $\mu_\ell$-action on $T$ induced by the group homomorphism $\mu_\ell \hookrightarrow \bG_{m,k} \to T$. 

For each closed subscheme $U$ of $T$ that is invariant under the $(\mu_\ell, w)$-action, we will let $U_\ell^w$ denote the scheme $U$ endowed with the $\mu_\ell$-action given by restriction of the $(\mu_\ell, w)$-action.
\end{definition}

\begin{remark}
\label{characterctionpullback}
Under the $(\mu_\ell, w)$-action, each $\xi \in \mu_\ell$ acts on $T$ with pullback
\[
	\chi^u \mapsto \xi^{\langle u, w \rangle} \chi^u.
\]
\end{remark}

\begin{proposition}
\label{initialdegenerationinvariant}
Let $\ell \in \Z_{>0}$, let $w \in N$, and let $U$ be a closed subscheme of $T$. Then the initial degeneration $\init_w U$ is a closed subscheme of $T$ that is invariant under the $(\mu_\ell, w)$-action.
\end{proposition}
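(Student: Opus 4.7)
The plan is to argue directly from the defining ideal. Recall that if $U \subset T$ is the closed subscheme defined by an ideal $I \subset k[M]$, then $\init_w U$ is the closed subscheme defined by the initial ideal $\init_w I$, which is generated by the initial forms $\init_w f$ for $f \in I$. Here, for a Laurent polynomial $f = \sum_u c_u \chi^u$, the initial form $\init_w f$ is the sum of those terms $c_u \chi^u$ for which the pairing $\langle u, w \rangle$ is maximal. In particular, $\init_w f$ is $w$-homogeneous: every character $\chi^u$ appearing in it satisfies $\langle u, w \rangle = m$ for one common integer $m = m(f)$.

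Given this, the verification is immediate from \autoref*{characterctionpullback}. For any $\xi \in \mu_\ell$ and any $w$-homogeneous element $g = \sum_{\langle u, w \rangle = m} c_u \chi^u$, the pullback along the action of $\xi$ sends
\[
    g \mapsto \sum_{\langle u, w \rangle = m} c_u \xi^{\langle u, w\rangle} \chi^u = \xi^m \cdot g.
\]
Applying this with $g = \init_w f$ for each $f \in I$, we see that the pullback of $\init_w f$ is a scalar multiple of $\init_w f$, hence lies in the ideal $\init_w I$. Therefore $\init_w I$ is stable under the pullback action of every $\xi \in \mu_\ell$, which is exactly the statement that $\init_w U$ is $(\mu_\ell, w)$-invariant.

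There is no real obstacle here; the only thing to be careful about is keeping straight the conventions for $\init_w$ (max vs.\ min pairing) and for the $(\mu_\ell, w)$-action, but both are pinned down by the preceding definitions and remark. In particular, one does not need to invoke any of the machinery of Gröbner degenerations over $R = k\llbracket \pi \rrbracket$ or tropical schemes for this statement; the elementary observation that the initial ideal is generated by $w$-homogeneous elements, combined with the character formula of \autoref*{characterctionpullback}, suffices.
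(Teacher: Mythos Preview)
Your proof is correct and follows essentially the same route as the paper: both arguments reduce to the observation that $\init_w f$ is $w$-homogeneous and then use \autoref*{characterctionpullback} to see that the pullback by $\xi$ multiplies $\init_w f$ by the scalar $\xi^{m}$ (the paper writes this scalar as $\xi^{\trop(f)(w)}$). Your remark about the max/min convention is apt but, as you note, irrelevant to the argument.
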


\begin{proof}
Let $\xi \in \mu_\ell$, and let $\xi_T: T \to T$ be its action on $T$. It will be sufficient to show that for all $f \in k[M]$, the pullback $\xi_T^*(\init_w f)$ is contained in the ideal of $k[M]$ generated by $\init_w f$.

By definition,
\[
	\supp(\init_w f) = \{u \in \supp(f) \, | \, \langle u, w \rangle = \trop(f)(w)\},
\]
so by \autoref*{characterctionpullback},
\[
	\xi_T^*(\init_w f) = \xi^{\trop(f)(w)} \init_w f,
\]
and we are done.
\end{proof}

\begin{proposition}
\label{binomialcharacterinvariant}
Let $w \in N$, let $u \in M$ such that $\langle u, w \rangle > 0$, and let $V(\chi^u-1)$ be the closed subscheme of $T$ defined by $\chi^u-1 \in k[M]$. Then $V(\chi^u-1)$ is invariant under the $(\mu_{\langle u, w \rangle}, w)$-action.
\end{proposition}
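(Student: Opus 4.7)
The plan is a direct computation using Remark 3.2 (characterctionpullback). I would unpack the $(\mu_{\langle u, w \rangle}, w)$-action on the coordinate ring $k[M]$ and check that it fixes the defining element $\chi^u - 1$ of $V(\chi^u - 1)$, which immediately implies invariance of the closed subscheme.

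More precisely, set $\ell = \langle u, w \rangle$ and let $\xi \in \mu_\ell$ act on $T$ via the $(\mu_\ell, w)$-action, with associated morphism $\xi_T : T \to T$. By the remark, the pullback satisfies $\xi_T^*(\chi^{u'}) = \xi^{\langle u', w \rangle} \chi^{u'}$ for every $u' \in M$. Applying this to $u' = u$ gives
\[
\xi_T^*(\chi^u - 1) = \xi^{\langle u, w \rangle} \chi^u - 1 = \xi^{\ell} \chi^u - 1 = \chi^u - 1,
\]
since $\xi^{\ell} = 1$ by definition of $\mu_\ell$. Thus the ideal $(\chi^u - 1) \subset k[M]$ is fixed (even pointwise on this generator) by the $\mu_\ell$-action, so the closed subscheme $V(\chi^u - 1)$ is $\mu_\ell$-invariant, as claimed.

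There is no real obstacle here: the only subtlety is that the exponent $\ell$ in $\mu_\ell$ is tuned precisely to $\langle u, w \rangle$, which is exactly what makes $\xi^{\langle u, w \rangle}$ collapse to $1$. The hypothesis $\langle u, w \rangle > 0$ is what allows us to form the group $\mu_{\langle u, w \rangle}$ in the first place.
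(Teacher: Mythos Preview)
Your proof is correct and matches the paper's own argument essentially line for line: both apply the pullback formula $\xi_T^*(\chi^u) = \xi^{\langle u,w\rangle}\chi^u$ to the generator $\chi^u-1$ and use $\xi^{\langle u,w\rangle}=1$ for $\xi\in\mu_{\langle u,w\rangle}$.
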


\begin{proof}
Let $\xi \in \mu_{\langle u, w \rangle}$, and let $\xi_T: T \to T$ be its action on $T$. Then by \autoref*{characterctionpullback},
\[
	\xi_T^*(\chi^u - 1) = \xi^{\langle u, w \rangle} \chi^u - 1 = \chi^u -1,
\]
and we are done.
\end{proof}

\begin{proposition}
\label{initdegscalingcharacteraction}
Let $U$ be a closed subscheme of $T$, let $\ell \in \Z_{>0}$, let $w \in N$, and let $u \in M$ be such that $\langle u, w \rangle > 0$.

Then $U$ is invariant under the $(\mu_{\langle u, w \rangle}, w)$-action if and only if $U$ is invariant under the $(\mu_{\langle u, \ell w \rangle}, \ell w)$-action. 

Furthermore, if $U$ is invariant under the $(\mu_{\langle u, w \rangle}, w)$-action, then
\[
	[U_{\langle u, w \rangle}^w, \hat\mu] = [U_{\langle u, \ell w \rangle}^{\ell w}, \hat\mu] \in K_0^{\hat\mu}(\Var_k).
\]
\end{proposition}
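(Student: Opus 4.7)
The plan is to reduce both assertions to a single observation: the $(\mu_{\langle u, \ell w \rangle}, \ell w)$-action on $T$ factors through the $(\mu_{\langle u, w \rangle}, w)$-action via the group homomorphism
\[
	\mu_{\langle u, \ell w \rangle} \longrightarrow \mu_{\langle u, w \rangle}, \qquad \xi \mapsto \xi^\ell.
\]
Indeed, $\langle u, \ell w \rangle = \ell \langle u, w \rangle$, so the source and target make sense, and the co-character $\ell w \colon \bG_{m,k} \to T$ is equal to $w$ pre-composed with $t \mapsto t^\ell$. Consequently, under the $(\mu_{\langle u, \ell w\rangle}, \ell w)$-action, an element $\xi \in \mu_{\langle u, \ell w \rangle}$ acts on $T$ in exactly the same way that $\xi^\ell \in \mu_{\langle u, w \rangle}$ acts under the $(\mu_{\langle u, w \rangle}, w)$-action; this also matches the formula in \autoref*{characterctionpullback}, since $\xi^{\langle u', \ell w \rangle} = (\xi^\ell)^{\langle u', w \rangle}$ for every $u' \in M$.

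For the equivalence of invariance conditions, the forward direction is immediate from this factorization. For the reverse direction, I would use that the homomorphism $\mu_{\langle u, \ell w \rangle} \to \mu_{\langle u, w \rangle}$, $\xi \mapsto \xi^\ell$, is surjective: given $\eta \in \mu_{\langle u, w \rangle} \subset k^\times$, algebraic closure of $k$ supplies $\xi \in k^\times$ with $\xi^\ell = \eta$, and then $\xi^{\ell \langle u, w \rangle} = \eta^{\langle u, w \rangle} = 1$ places $\xi$ in $\mu_{\langle u, \ell w \rangle}$. Thus every element of $\mu_{\langle u, w \rangle}$ acts on $T$ as the image of some element of $\mu_{\langle u, \ell w \rangle}$, so if $U$ is invariant under the $(\mu_{\langle u, \ell w \rangle}, \ell w)$-action it is invariant under the $(\mu_{\langle u, w \rangle}, w)$-action.

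For the equality of classes in $K_0^{\hat\mu}(\Var_k)$, I would appeal directly to the construction of $K_0^{\hat\mu}(\Var_k)$ as the colimit $\varinjlim_m K_0^{\mu_m}(\Var_k)$ taken along the transition maps induced by the homomorphisms $\mu_{m m'} \to \mu_m$, $\xi \mapsto \xi^{m'}$. Taking $m = \langle u, w \rangle$ and $m' = \ell$, the factorization established above says exactly that the transition map $K_0^{\mu_{\langle u, w \rangle}}(\Var_k) \to K_0^{\mu_{\langle u, \ell w \rangle}}(\Var_k)$ sends $[U_{\langle u, w \rangle}^w, \mu_{\langle u, w \rangle}]$ to $[U_{\langle u, \ell w \rangle}^{\ell w}, \mu_{\langle u, \ell w \rangle}]$. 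Therefore the two classes have equal image in the colimit, which is the desired equality in $K_0^{\hat\mu}(\Var_k)$. The only step that requires genuine care is the bookkeeping identification of the co-character $\ell w$ with $w \circ (t \mapsto t^\ell)$; once that is in place, everything else is formal from the definitions.
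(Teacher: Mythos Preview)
Your proposal is correct and follows essentially the same approach as the paper: both arguments identify the $(\mu_{\langle u,\ell w\rangle},\ell w)$-action as the $(\mu_{\langle u,w\rangle},w)$-action pulled back along the surjection $\mu_{\langle u,\ell w\rangle}\to\mu_{\langle u,w\rangle}$, $\xi\mapsto\xi^\ell$, then use surjectivity for the invariance equivalence and the definition of the transition map $K_0^{\mu_{\langle u,w\rangle}}(\Var_k)\to K_0^{\mu_{\langle u,\ell w\rangle}}(\Var_k)$ for the equality of classes in $K_0^{\hat\mu}(\Var_k)$. Your write-up is slightly more explicit about why the surjection holds (via algebraic closure of $k$), but otherwise the proofs are the same.
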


\begin{proof}
Under the $(\mu_{\langle u, w \rangle}, w)$-action, each $\xi \in \mu_{\langle u, w \rangle}$ acts on $T$ with pullback
\[
	\chi^{u'} \mapsto \xi^{\langle u', w \rangle} \chi^{u'}.
\]
The homomorphism $\mu_{\langle u, \ell w \rangle} \to \mu_{\langle u, w \rangle}: \xi \mapsto \xi^\ell$ and the $(\mu_{\langle u, w \rangle}, w)$-action induce a $\mu_{\langle u, \ell w \rangle}$-action on $T$ such that each $\xi \in \mu_{\langle u, \ell w \rangle}$ acts on $T$ with pullback
\[
	\chi^{u'} \mapsto (\xi^\ell)^{\langle u', w \rangle} \chi^{u'} = \xi^{\langle u', \ell w \rangle} \chi^{u'}.
\]
We see that this action is equal to the $(\mu_{\langle u, \ell w \rangle}, \ell w)$-action. Then the surjectivity of $\mu_{\langle u, \ell w\rangle} \to \mu_{\langle u, w \rangle}$ implies that $U$ is invariant under the $(\mu_{\langle u, w \rangle}, w)$-action if and only if it is invariant under the $(\mu_{\langle u, \ell w \rangle}, \ell w)$-action. The remainder of the proposition follows from the definition of the map $K_0^{\mu_{\langle u, w \rangle}}(\Var_k) \to K_0^{\mu_{\langle u, \ell w \rangle}}(\Var_k)$.
\end{proof}

We will devote the remainder of this section to proving the following proposition.

\begin{proposition}
\label{moninitialdeg}
Let $U$ be a closed subscheme of $T$, let $u \in M$, let $V(\chi^u-1)$ be the closed subscheme of $T$ defined by $\chi^u-1 \in k[M]$, let $w \in u^\perp \cap N$, and let $v \in N$ be such that $\ell = \langle u, v \rangle > 0$ and such that $\init_w U$ is invariant under the $(\mu_\ell, v)$-action.

Then $\init_w U$ is invariant under the $(\mu_\ell, v-w)$-action, and
\[
	[(V(\chi^u-1) \cap \init_w U)_\ell^v, \mu_\ell] = [ (V(\chi^u-1) \cap \init_w U)_\ell^{v-w}, \mu_\ell] \in K_0^{\mu_\ell}(\Var_k).
\]
\end{proposition}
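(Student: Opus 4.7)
The plan would first dispatch the invariance claim and then establish the class identity by reducing to the case where $w$ is primitive in $N$. For invariance: since $\init_w U$ is invariant under $(\mu_\ell, v)$ by hypothesis and under $(\mu_\ell, w)$ by \autoref{initialdegenerationinvariant}, and $v, w$ live in the commutative group $N$, the $(\mu_\ell, v-w)$-action (pointwise composition of $(\mu_\ell, v)$ with the inverse of $(\mu_\ell, w)$) also preserves $\init_w U$.

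For the class identity, I would first reduce to the case $w$ primitive. If $w = 0$ there is nothing to prove; otherwise write $w = dw'$ with $d \in \Z_{>0}$ and $w' \in N$ primitive, and observe $\init_w U = \init_{w'} U$ since the initial degeneration depends on $w$ only up to positive scaling. I would then iterate the primitive case $d$ times along the chain $v, v - w', v - 2w', \dots, v - dw' = v-w$, using the same commutation argument as above to check the invariance hypothesis at each step.

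For the primitive case, complete $\{w\}$ to a $\Z$-basis $\{w = e_1, e_2, \dots, e_n\}$ of $N$ with dual basis $\{\tilde u_1, \dots, \tilde u_n\}$ of $M$; this yields an identification $T \cong \bG_{m,k} \times T'$ with $T' = \Spec(k[\chi^{\pm \tilde u_2}, \dots, \chi^{\pm \tilde u_n}])$ and the cocharacter $w$ corresponding to the first factor. Since $\langle u, w\rangle = 0$, the character $\chi^u$ is a regular function on $T'$ alone, so $V(\chi^u - 1) = \bG_{m,k} \times V_{T'}(\chi^u - 1)$; and a direct inspection of the initial ideal of $U$ shows its generators can be taken in $k[T']$ after multiplying by powers of the unit $\chi^{\tilde u_1}$, giving $\init_w U = \bG_{m,k} \times U''$ for some closed $U'' \subset T'$. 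Hence $V(\chi^u - 1) \cap \init_w U = \bG_{m,k} \times W$ with $W = V_{T'}(\chi^u - 1) \cap U''$.

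Writing $v = v_1 w + v'$ with $v_1 \in \Z$ and $v'$ a cocharacter of $T'$, the two $\mu_\ell$-actions on $\bG_{m,k} \times W$ are both diagonal product actions, agreeing on the $W$-factor and acting on $\bG_{m,k}$ by scalar multiplication by $\xi^{v_1}$ or $\xi^{v_1 - 1}$. By the product formula in $K_0^{\mu_\ell}(\Var_k)$ together with the affine-bundle relation — which implies that any linear $\mu_\ell$-action on $\bA_k^1$ has class $\bL$, hence any such action on $\bG_{m,k}$ has class $\bL - 1$ — both classes reduce to $(\bL - 1) \cdot [W, \mu_\ell]$ with the common $W$-action. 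The main obstacle is justifying the product decomposition of $\init_w U$; once the splitting of $T$ is in place, the appeal to the affine-bundle relation is routine.
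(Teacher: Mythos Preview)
Your proposal is correct and follows essentially the same strategy as the paper: split off a one-dimensional torus in the $w$-direction, write $V(\chi^u-1)\cap\init_w U$ as a product, observe that the two $\mu_\ell$-actions agree on the quotient factor and differ only on the $\bG_{m,k}$-factor, and conclude using that any linear $\mu_\ell$-action on $\bG_{m,k}$ has class $\bL-1$. The only cosmetic differences are that the paper handles arbitrary nonzero $w$ directly via the quotient $T\to\cO_w=\Spec(k[w^\perp\cap M])$ rather than reducing to $w$ primitive and iterating, and that your invariance argument (combining the hypothesis with \autoref{initialdegenerationinvariant} via commutativity in $N$) is slightly more direct than the paper's, which deduces invariance from the same product decomposition.
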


\begin{remark}
In the statement of \autoref*{moninitialdeg}, because $\ell = \langle u, v \rangle = \langle u, v-w \rangle$, \autoref*{binomialcharacterinvariant} implies that $V(\chi^u-1)$ is invariant under the $(\mu_\ell, v)$-action and the $(\mu_\ell, v-w)$-action, so the classes in the statement are well defined.
\end{remark}

\subsection{Proof of \autoref*{moninitialdeg}}

Let $U$ be a closed subscheme of $T$, let $u \in M$, let $V(\chi^u-1)$ be the closed subscheme of $T$ defined by $\chi^u-1 \in k[M]$, let $w \in u^\perp \cap N$, and let $v \in N$ be such that $\ell = \langle u, v \rangle > 0$ and such that $\init_w U$ is invariant under the $(\mu_\ell, v)$-action. \autoref*{moninitialdeg} is clear when $w=0$, so we assume that $w \neq 0$.

Let $\cO_w = \Spec(k[w^\perp \cap M])$, and let $T \to \cO_w$ be the algebraic group homomorphism induced by the inclusion $k[w^\perp \cap M] \to k[M]$.

\begin{lemma}
\label{shiftintoperp}
Let $f \in k[M]$. Then there exists $u' \in M$ such that $\init_w (\chi^{u'} f) \in k[w^\perp \cap M]$.
\end{lemma}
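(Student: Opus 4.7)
The plan is to use the fact that multiplication by a character $\chi^{u'}$ simply translates supports of Laurent polynomials by $u'$, and that the support of $\init_w f$ consists of exponents all having the same $w$-pairing. Then translating by $u' = -u_0$ for any single $u_0 \in \supp(\init_w f)$ will place every exponent in $w^\perp$.

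More concretely, first I would dispose of the trivial case $f = 0$, in which $\init_w f = 0 \in k[w^\perp \cap M]$ and any $u'$ works. Next, I would observe that for any $u' \in M$,
\[
\init_w(\chi^{u'} f) = \chi^{u'} \init_w f,
\]
because $\trop(\chi^{u'}f)(w) = \langle u', w \rangle + \trop(f)(w)$ and multiplication by $\chi^{u'}$ is injective on $k[M]$, so the support of the initial form is just translated:
\[
\supp(\init_w(\chi^{u'} f)) = u' + \supp(\init_w f).
\]

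For the main step, I would pick any $u_0 \in \supp(\init_w f)$, which is nonempty when $f \neq 0$, and set $u' = -u_0$. For any $u \in \supp(\init_w f)$, by definition $\langle u, w \rangle = \langle u_0, w \rangle = \trop(f)(w)$, so $u - u_0 \in w^\perp$. Hence every element of $\supp(\init_w(\chi^{u'}f)) = -u_0 + \supp(\init_w f)$ lies in $w^\perp \cap M$, giving $\init_w(\chi^{u'} f) \in k[w^\perp \cap M]$.

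There is no real obstacle; the lemma is a bookkeeping fact about supports of initial forms, and it will presumably be used in the proof of \autoref{moninitialdeg} to reduce $V(\chi^u-1) \cap \init_w U$ to data pulled back from the quotient torus $\cO_w$, where the $(\mu_\ell,w)$-action becomes trivial and hence the $(\mu_\ell,v)$- and $(\mu_\ell,v-w)$-actions agree.
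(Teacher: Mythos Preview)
Your proof is correct and essentially identical to the paper's: both handle $f=0$ trivially, then choose $u'$ to be the negative of an element of $\supp(\init_w f)$ and use $\init_w(\chi^{u'} f) = \chi^{u'}\init_w f$ together with the fact that all exponents in $\supp(\init_w f)$ have the same $w$-pairing. Your commentary on the lemma's role in the proof of \autoref{moninitialdeg} is also accurate.
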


\begin{proof}
By definition, 
\[
	\supp(\init_w f) = \{u' \in \supp(f) \, | \, \langle u', w \rangle = \trop(f)(w)\}.
\]
If $f=0$, the statement is obvious. Thus we may assume that there exists $u' \in M$ such that $-u' \in \supp(\init_w f)$. Then we have that 
\[
	\init_w (\chi^{u'} f) = \chi^{u'} \init_w f \in k[w^\perp \cap M].
\]
\end{proof}

\begin{proposition}
\label{initdegpreimageprojection}
There exist closed subschemes $Y$ and $Z$ of $\cO_w$ such that $\init_w U$ is equal to the pre-image of $Y$ under the morphism $T \to \cO_w$ and $V(\chi^u-1) \cap \init_w U$ is equal to the pre-image of $Z$ under the morphism $T \to \cO_w$.
\end{proposition}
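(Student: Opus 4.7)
The plan is to exhibit generators for the defining ideals of $\init_w U$ and $V(\chi^u-1) \cap \init_w U$ in $k[M]$ that all lie in the subring $k[w^\perp \cap M]$. Since the morphism $T \to \cO_w$ corresponds to the inclusion $k[w^\perp \cap M] \hookrightarrow k[M]$, a closed subscheme of $T$ is the preimage of a closed subscheme of $\cO_w$ precisely when its defining ideal is generated by elements of $k[w^\perp \cap M]$; given such generators, the desired subscheme of $\cO_w$ is simply the vanishing locus of the ideal they generate in $k[w^\perp \cap M]$.

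To produce the generators for $\init_w U$, let $I \subset k[M]$ be the defining ideal of $U$, so the defining ideal of $\init_w U$ is generated by $\{\init_w f : f \in I\}$. By \autoref*{shiftintoperp}, for each such $f$ there exists $u'_f \in M$ with $g_f := \chi^{u'_f} \init_w f \in k[w^\perp \cap M]$. Because $\chi^{u'_f}$ is a unit in $k[M]$, the $g_f$ generate the same ideal in $k[M]$ as the $\init_w f$, so the defining ideal of $\init_w U$ is generated by elements of $k[w^\perp \cap M]$. Letting $J \subset k[w^\perp \cap M]$ be the ideal they generate and taking $Y := V(J) \subset \cO_w$ then gives the first half of the proposition.

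For $V(\chi^u-1) \cap \init_w U$, its defining ideal in $k[M]$ is obtained from that of $\init_w U$ by adjoining $\chi^u - 1$. The hypothesis $w \in u^\perp \cap N$ means $\langle u, w \rangle = 0$, so $u \in w^\perp \cap M$ and hence $\chi^u - 1 \in k[w^\perp \cap M]$ already. Setting $Z := V(J + (\chi^u - 1)) \subset \cO_w$ therefore works. There is no serious obstacle beyond invoking \autoref*{shiftintoperp}: that lemma encodes the fact that $\init_w f$ is $w$-homogeneous (up to translation by a character), which is exactly the ingredient needed to descend $\init_w U$ along the quotient $T \to \cO_w$ by the subtorus generated by the cocharacter $w$.
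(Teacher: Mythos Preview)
Your proof is correct and follows essentially the same approach as the paper: use \autoref{shiftintoperp} to replace each generator $\init_w f$ of the ideal of $\init_w U$ by a unit multiple lying in $k[w^\perp \cap M]$, and observe that $\chi^u - 1 \in k[w^\perp \cap M]$ since $w \in u^\perp$. The only cosmetic difference is that the paper begins with a finite generating set $\init_w f_1,\dots,\init_w f_m$ rather than ranging over all of $I$, but the argument is otherwise identical.
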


\begin{proof}
Let $f_1, \dots, f_m \in k[M]$ be such that $\init_w f_1, \dots, \init_w f_m \in k[M]$ generate the ideal defining $\init_w U$ in $T$. By \autoref*{shiftintoperp}, we can assume that $\init_w f_1, \dots, \init_w f_m \in k[w^\perp \cap M]$. Because $w \in u^\perp$, we have that $\chi^u \in k[w^\perp \cap M]$.

Thus we may let $Y$ be the closed subscheme of $\cO_w$ defined by the ideal generated by $\init_w f_1, \dots, \init_w f_m \in k[w^\perp \cap M]$, and we may let $Z$ be the closed subscheme of $\cO_w$ defined by the ideal generated by $\init_w f_1, \dots, \init_w f_m, \chi^u-1 \in k[w^\perp \cap M]$, and we are done.
\end{proof}

\begin{lemma}
\label{characterssameafterprojection}
The composition of the co-character $\bG_{m,k} \to T$ corresponding to $v$ with the morphism $T \to \cO_w$ is equal to the composition of the co-character $\bG_{m,k} \to T$ corresponding to $v-w$ with the morphism $T \to \cO_w$
\end{lemma}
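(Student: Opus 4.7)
The plan is to reduce the claim to a statement about co-characters of $\cO_w$ and then observe that the difference of the two relevant co-characters vanishes by the definition of $w^\perp$. Since $\cO_w = \Spec(k[w^\perp \cap M])$ is itself a torus with character lattice $w^\perp \cap M$, I would first identify its co-character lattice as $\Hom(w^\perp \cap M, \Z)$. The morphism $T \to \cO_w$ is induced by the inclusion $k[w^\perp \cap M] \hookrightarrow k[M]$, so on co-character lattices it is exactly the restriction map $N \to \Hom(w^\perp \cap M, \Z)$ sending $v' \in N$ to the functional $u \mapsto \langle u, v' \rangle$.

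Next, I would unwind what the two compositions $\bG_{m,k} \to T \to \cO_w$ are as co-characters of $\cO_w$. By the previous step these are simply $v|_{w^\perp \cap M}$ and $(v-w)|_{w^\perp \cap M}$, so their difference is $w|_{w^\perp \cap M}$. But by the very definition of $w^\perp$, we have $\langle u, w \rangle = 0$ for all $u \in w^\perp \cap M$, so this restriction is the zero functional. Hence the two co-characters of $\cO_w$ are equal, and since a co-character of a torus determines the morphism $\bG_{m,k} \to \cO_w$ uniquely, the two compositions agree.

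There is essentially no obstacle here: the lemma is a formal consequence of the duality between characters and co-characters, together with the tautology that $w$ annihilates $w^\perp \cap M$. The only thing to be careful about is keeping straight that $T \to \cO_w$ is a quotient of tori corresponding on character lattices to the inclusion $w^\perp \cap M \subset M$, rather than to any projection in the opposite direction.
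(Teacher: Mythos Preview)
Your argument is correct and essentially identical to the paper's: both translate the two compositions into the lattice maps $w^\perp \cap M \hookrightarrow M \xrightarrow{\langle \cdot, v \rangle} \Z$ and $w^\perp \cap M \hookrightarrow M \xrightarrow{\langle \cdot, v-w \rangle} \Z$, and then observe these agree because $w$ pairs trivially with $w^\perp \cap M$. The paper's version is simply more terse.
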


\begin{proof}
The composition of the co-character $\bG_{m,k} \to T$ corresponding to $v$ with the morphism $T \to \cO_w$ corresponds to the map of lattices $w^\perp \cap M \hookrightarrow M \xrightarrow{\langle \cdot, v \rangle} \Z$, and the composition of the co-character $\bG_{m,k} \to T$ corresponding to $v-w$ with the morphism $T \to \cO_w$ corresponds to the map of lattices $w^\perp \cap M \hookrightarrow M \xrightarrow{\langle \cdot, v-w \rangle} \Z$. These are clearly the same lattice maps, so we are done.
\end{proof}

Let $T_w = \Spec(k[(\R w \cap N)^\vee])$. Any splitting of $0 \to \R w \cap N \to N \to N/(\R w \cap N) \to 0$ induces an isomorphism of algebraic groups $T \cong T_w \times_k \cO_w$ such that $T \to \cO_w$ corresponds to the projection $T_w \times_k \cO_w \to \cO_w$.

Let $\phi_1: \mu_\ell \to T$ (resp. $\phi_2: \mu_\ell \to T$) be the composition of $\mu_\ell \hookrightarrow \bG_{m,k}$ with the co-character $\bG_{m,k} \to T$ corresponding to $v$ (resp. $v-w$).

Let $\varphi_1: \mu_\ell \to \cO_w$ (resp. $\varphi_2: \mu_\ell \to \cO_w$) be the composition of $\phi_1$ (resp. $\phi_2$) with $T \to \cO_w$.

\begin{lemma}
\label{diagonalactionequalfactor}
We have that $\varphi_1 = \varphi_2$.
\end{lemma}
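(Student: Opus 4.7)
The plan is to read off the equality directly from \autoref*{characterssameafterprojection}. By definition, $\varphi_1$ is the composition
\[
	\mu_\ell \hookrightarrow \bG_{m,k} \xrightarrow{v} T \longrightarrow \cO_w,
\]
where the middle arrow is the co-character corresponding to $v$, and similarly $\varphi_2$ is the composition
\[
	\mu_\ell \hookrightarrow \bG_{m,k} \xrightarrow{v-w} T \longrightarrow \cO_w
\]
using the co-character corresponding to $v-w$. \autoref*{characterssameafterprojection} says exactly that the two composites $\bG_{m,k} \to T \to \cO_w$ agree as morphisms, so pre-composing with the common inclusion $\mu_\ell \hookrightarrow \bG_{m,k}$ yields the same morphism.

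Hence the lemma is an immediate corollary of \autoref*{characterssameafterprojection}; there is no real obstacle. If one wishes to be fully explicit, one can verify the identity at the level of character lattices: both $\varphi_1$ and $\varphi_2$ pull back a character $\chi^{u'}$ with $u' \in w^\perp \cap M$ to $\chi^{u'} \mapsto \xi^{\langle u', v\rangle}$ and $\chi^{u'} \mapsto \xi^{\langle u', v-w\rangle}$ respectively, and these coincide because $\langle u', w\rangle = 0$ for $u' \in w^\perp \cap M$.
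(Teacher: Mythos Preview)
Your proof is correct and is essentially identical to the paper's own argument, which simply states that the lemma follows directly from \autoref{characterssameafterprojection}. You have merely unwound the definitions of $\varphi_1$ and $\varphi_2$ to make explicit why pre-composing with $\mu_\ell \hookrightarrow \bG_{m,k}$ yields the result.
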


\begin{proof}
This follows directly from \autoref*{characterssameafterprojection}.
\end{proof}

Let $\psi_1: \mu_\ell \to T_w$ (resp. $\psi_2: \mu_\ell \to T_w$) be the composition of $\phi_1$ (resp. $\phi_2$) with the projection $T \cong T_w \times_k \cO_w \to T_w$.

\begin{remark}
\label{characteractiondiagonal}
We see that under the identification $T \cong T_w \times_k \cO_w$, the $(\mu_\ell, v)$-action (resp. $(\mu_\ell, v-w)$-action) is the diagonal action defined by the action on $\cO_w$ induced by $\varphi_1$ (resp. $\varphi_2$) and the action on $T_w$ induced by $\psi_1$ (resp. $\psi_2$).
\end{remark}

We now prove the first part of \autoref*{moninitialdeg}.

\begin{proposition}
We have that $\init_w U$ is invariant under the $(\mu_\ell, v-w)$-action.
\end{proposition}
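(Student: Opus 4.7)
The plan is to use the product decomposition established just above: by \autoref*{initdegpreimageprojection}, $\init_w U$ is the preimage of a closed subscheme $Y \subset \cO_w$ under the projection $T \to \cO_w$. Fixing a splitting of $0 \to \R w \cap N \to N \to N/(\R w \cap N) \to 0$ identifies $T \cong T_w \times_k \cO_w$ in such a way that the projection $T \to \cO_w$ becomes the second projection, and hence
\[
	\init_w U \;\cong\; T_w \times_k Y
\]
as closed subschemes of $T_w \times_k \cO_w$.

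Next I would invoke \autoref*{characteractiondiagonal}: under the above identification, the $(\mu_\ell, v)$-action is the diagonal action coming from $\psi_1$ on $T_w$ and $\varphi_1$ on $\cO_w$, while the $(\mu_\ell, v-w)$-action is the diagonal action coming from $\psi_2$ on $T_w$ and $\varphi_2$ on $\cO_w$. A product $T_w \times_k Y$ is automatically invariant under any diagonal action of $\mu_\ell$ on $T_w \times_k \cO_w$ whose restriction to $\cO_w$ preserves $Y$, since the $T_w$-factor action sends the whole of $T_w$ to itself.

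Thus the $(\mu_\ell, v)$-invariance of $\init_w U = T_w \times_k Y$ is equivalent to the $\varphi_1$-invariance of $Y \subset \cO_w$. By \autoref*{diagonalactionequalfactor} we have $\varphi_1 = \varphi_2$, so $Y$ is equally invariant under $\varphi_2$. Therefore $T_w \times_k Y$ is also invariant under the diagonal $(\psi_2, \varphi_2)$-action, i.e., $\init_w U$ is invariant under the $(\mu_\ell, v-w)$-action. The only potential subtlety is verifying that the decomposition $\init_w U \cong T_w \times_k Y$ really is compatible with the two $\mu_\ell$-actions in the way claimed, but this is immediate from \autoref*{characteractiondiagonal} and the choice of splitting, so no serious obstacle arises.
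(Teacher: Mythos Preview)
Your proposal is correct and follows essentially the same argument as the paper: both use \autoref*{initdegpreimageprojection} to write $\init_w U = T_w \times_k Y$ under the chosen splitting, then use \autoref*{characteractiondiagonal} and \autoref*{diagonalactionequalfactor} to transfer invariance from the $\varphi_1$-action to the $\varphi_2$-action on $Y$ and hence from the $(\mu_\ell,v)$-action to the $(\mu_\ell,v-w)$-action on $\init_w U$.
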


\begin{proof}
By \autoref*{initdegpreimageprojection}, there exists a closed subscheme $Y$ of $\cO_w$ such that $\init_w U$ is equal to the pre-image of $Y$ under the morphism $T \to \cO_w$. Then under the identification $T \cong T_w \times_k \cO_w$, we have that
\[
	\init_w U = T_w \times_k Y.
\]
Because $\init_w U$ is invariant under the $(\mu_\ell, v)$-action, \autoref*{characteractiondiagonal} implies that $Y$ is invariant under the $\mu_\ell$-action on $\cO_w$ induced by $\varphi_1$. By \autoref*{diagonalactionequalfactor}, $Y$ is invariant under the $\mu_\ell$-action on $\cO_w$ induced by $\varphi_2$, and by \autoref*{characteractiondiagonal}, this implies that $\init_w U$ is invariant under the $(\mu_\ell, v-w)$-action.
\end{proof}

Before we complete the proof of \autoref*{moninitialdeg}, we make the following observation, which follows from \cite[Lemma 7.1]{KutlerUsatine} and the fact that $\dim T_w = 1$.

\begin{remark}
\label{trivialclassinvarianttorus}
The class in $K_0^{\mu_\ell}(\Var_k)$ of $T_w$ with the $\mu_\ell$-action induced by $\psi_1$ (resp. $\psi_2$) is equal to $\bL-1$.
\end{remark}

We now complete the proof of \autoref*{moninitialdeg}.

\begin{proposition}
We have that
\[
	[(V(\chi^u-1) \cap \init_w U)_\ell^v, \mu_\ell] = [ (V(\chi^u-1) \cap \init_w U)_\ell^{v-w}, \mu_\ell] \in K_0^{\mu_\ell}(\Var_k).
\]
\end{proposition}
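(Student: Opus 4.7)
The plan is to use the product structure $T \cong T_w \times_k \cO_w$ afforded by the chosen splitting of $0 \to \R w \cap N \to N \to N/(\R w \cap N) \to 0$, together with the fact that $V(\chi^u-1) \cap \init_w U$ respects this product, to reduce the desired equality to an elementary computation of products of classes in the equivariant Grothendieck ring. By \autoref*{initdegpreimageprojection}, the intersection $V(\chi^u-1) \cap \init_w U$ is the pre-image under $T \to \cO_w$ of some closed subscheme $Z \subset \cO_w$; under the splitting this pre-image is identified with $T_w \times_k Z$. By \autoref*{characteractiondiagonal}, under the same splitting both the $(\mu_\ell, v)$- and $(\mu_\ell, v-w)$-actions on $T$ are product (diagonal) actions, acting on $T_w$ through $\psi_1$ (resp.\ $\psi_2$) and on $\cO_w$ through $\varphi_1$ (resp.\ $\varphi_2$). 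By \autoref*{diagonalactionequalfactor}, $\varphi_1 = \varphi_2$, so both actions restrict to $T_w \times_k Z$ as product actions with a common second factor $(Z, \varphi_1|_Z)$ and differing first factors $(T_w, \psi_1)$ and $(T_w, \psi_2)$.

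Next I would invoke the fact that the multiplication in $K_0^{\mu_\ell}(\Var_k)$ is defined geometrically by taking the fiber product with diagonal $\mu_\ell$-action, so the class of a product $\mu_\ell$-variety factors as the product of the classes of its factors. This yields
\[
	[(V(\chi^u-1) \cap \init_w U)_\ell^v, \mu_\ell] = [T_w, \psi_1\text{-action}, \mu_\ell] \cdot [Z, \varphi_1|_Z\text{-action}, \mu_\ell],
\]
and the analogous identity for the $(v-w)$-action with $\psi_2$ replacing $\psi_1$. The second factor is identical in both expressions, while by \autoref*{trivialclassinvarianttorus} the first factors satisfy $[T_w, \psi_1\text{-action}, \mu_\ell] = \bL - 1 = [T_w, \psi_2\text{-action}, \mu_\ell]$, yielding the claimed equality.

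The main subtlety is the genuine product decomposition $V(\chi^u-1) \cap \init_w U \cong T_w \times_k Z$ provided by \autoref*{initdegpreimageprojection}, which requires $u \in w^\perp \cap M$ so that $\chi^u - 1$ pulls back from $\cO_w$. Without such a clean decomposition, one would face a potentially twisted $T_w$-bundle on which the two actions differ by a nontrivial cocycle, and the factorization of classes in $K_0^{\mu_\ell}(\Var_k)$ used above would be unavailable. Given the product decomposition, however, the rest is essentially bookkeeping.
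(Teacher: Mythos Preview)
Your proposal is correct and follows essentially the same approach as the paper: both use \autoref*{initdegpreimageprojection} to obtain the product decomposition $T_w \times_k Z$, invoke \autoref*{characteractiondiagonal} and \autoref*{diagonalactionequalfactor} to see the two diagonal actions agree on the $Z$-factor, and then apply \autoref*{trivialclassinvarianttorus} to identify both $T_w$-factors with $\bL-1$ in $K_0^{\mu_\ell}(\Var_k)$. The only minor point you leave implicit is that $Z$ is $\varphi_1$-invariant, which follows (as the paper notes) from the $(\mu_\ell,v)$-invariance of $V(\chi^u-1)\cap\init_w U$ together with \autoref*{characteractiondiagonal}.
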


\begin{proof}
By \autoref*{initdegpreimageprojection}, there exists a closed subscheme $Z$ of $\cO_w$ such that $V(\chi^u-1) \cap \init_w U$ is equal to the pre-image of $Z$ under the morphism $T \to \cO_w$. Then under the identification $T \cong T_w \times_k \cO_w$, we have that
\[
	V(\chi^u-1) \cap \init_w U = T_w \times_k Z.
\]
Because $V(\chi^u-1) \cap \init_w U$ is invariant under the $(\mu_\ell, v)$-action, \autoref*{characteractiondiagonal} implies that $Z$ is invariant under the $\mu_\ell$-action on $\cO_w$ induced by $\varphi_1$.

Now endow $Z$ with the $\mu_\ell$-action given by restriction of the $\mu_\ell$-action on $\cO_w$ induced by $\varphi_1$, which by \autoref*{diagonalactionequalfactor} is the same as the $\mu_\ell$-action given by restriction of the $\mu_\ell$-action on $\cO_w$ induced by $\varphi_2$. Then by Remarks \ref*{characteractiondiagonal} and \ref*{trivialclassinvarianttorus},
\begin{align*}
	[(V(\chi^u-1) \cap \init_w U)_\ell^v, \mu_\ell] &= (\bL-1)[Z, \mu_\ell]\\
	&= [ (V(\chi^u-1) \cap \init_w U)_\ell^{v-w}, \mu_\ell].
\end{align*}
\end{proof}

\section{Motivic zeta functions and smooth initial degenerations}
\label{motiviczetafunctionsforschonvarieties}

Let $n \in \Z_{>0}$, let $\bA_k^n = \Spec(k[x_1, \dots, x_n])$, and let $\bG_{m,k}^n = \Spec(k[x_1^{\pm 1}, \dots, x_n^{\pm 1}])$. Let $X$ be a smooth pure dimension $d$ closed subscheme of $\bA_k^n$ such that $U = X \cap \bG_{m,k}^n$ is nonempty and such that for all $w \in \Trop(U) \cap \Z_{\geq 0}^n$, the initial degeneration $\init_w U$ is smooth and there exist $f_1, \dots, f_{n-d} \in k[x_1, \dots, x_n]$ that generate the ideal of $X$ in $\bA_k^n$ such that $\init_w f_1, \dots, \init_w f_{n-d} \in k[x_1^{\pm 1}, \dots, x_n^{\pm 1}]$ generate the ideal of $\init_w U$ in $\bG_{m,k}^n$.

Let $u \in \Z_{>0}^n$, let $Z^{\hat\mu}_{X,u}(T) \in \sM_X^{\hat\mu}\llbracket T \rrbracket$ be the Denef-Loeser motivic zeta function of the restriction $(x_1, \dots, x_n)^u|_X$, and let $Z^{\naive}_{X,u}(T) \in \sM_X\llbracket T \rrbracket$ be the motivic Igusa zeta function of $(x_1, \dots, x_n)^u|_X$.

To state \autoref*{zetafunctionschon} below, we will need the following proposition, which will also be proved in this section.

\begin{proposition}
\label{initialdegenerationXschemestructure}
Let $w = (w_1, \dots, w_n) \in \Trop(U) \cap \Z_{\geq 0}^n$, and let $\varphi: \bG_{m,k}^n \to \bA_k^n$ be the morphism whose pullback is given by $x_i \mapsto 0^{w_i}x_i$. Then the restriction of $\varphi$ to $\init_w U$ factors through $X$, and if $w \neq 0$, the induced map $(\init_w U)^w_{u \cdot w} \to X$ is $\mu_{u \cdot w}$-equivariant with respect to the trivial $\mu_{u \cdot w}$-action on $X$.
\end{proposition}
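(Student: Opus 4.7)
The first thing I would do is unpack the definition of $\varphi$. By construction, $\varphi^{\ast}$ sends a monomial $x^{\alpha}$ to $0^{w\cdot\alpha}\, x^{\alpha}$, with the convention $0^{0}=1$. Consequently, for any polynomial $f \in k[x_{1},\dots,x_{n}]$,
\[
\varphi^{\ast} f \;=\; \sum_{\alpha \in \supp(f),\ w\cdot\alpha = 0} c_{\alpha}\, x^{\alpha},
\]
where $c_{\alpha}$ is the coefficient of $x^{\alpha}$ in $f$. This pullback formula is the technical core from which both assertions follow.

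For the factorization claim, let $f_{1},\dots,f_{n-d}$ be the polynomials furnished by the running hypothesis of this section, so that they generate the ideal of $X$ in $\bA_{k}^{n}$ while $\init_{w} f_{1},\dots,\init_{w} f_{n-d}$ generate the ideal of $\init_{w} U$ in $\bG_{m,k}^{n}$. It suffices to check that each $\varphi^{\ast} f_{j}$ vanishes on $\init_{w}U$. Because $w\in \Z_{\geq 0}^{n}$ and $\supp(f_{j})\subset \Z_{\geq 0}^{n}$, we have $w\cdot\alpha\geq 0$ for every $\alpha\in \supp(f_{j})$, so $\trop(f_{j})(w)\geq 0$. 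Two cases: if $\trop(f_{j})(w)>0$, then no monomial of $f_{j}$ survives and $\varphi^{\ast} f_{j}=0$; if $\trop(f_{j})(w)=0$, then the surviving monomials are precisely those realizing the tropicalization, so $\varphi^{\ast} f_{j} = \init_{w} f_{j}$, which vanishes on $\init_{w}U$ by hypothesis.

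For the equivariance claim, assume $w\neq 0$ so that $\ell:=u\cdot w>0$. By \autoref{characterctionpullback}, the $(\mu_{\ell},w)$-action on $\bG_{m,k}^{n}$ has pullback $x_{i}\mapsto \xi^{w_{i}} x_{i}$ for $\xi\in \mu_{\ell}$. Composing with $\varphi^{\ast}$ yields $x_{i}\mapsto 0^{w_{i}}\,\xi^{w_{i}}\, x_{i}$. When $w_{i}=0$ this is $x_{i} = \varphi^{\ast}(x_{i})$; when $w_{i}>0$ the factor $0^{w_{i}}$ annihilates everything and it is $0 = \varphi^{\ast}(x_{i})$. Hence $\varphi$ is constant on $(\mu_{\ell},w)$-orbits, which is precisely equivariance with respect to the trivial action on $X$; restricting to the invariant closed subscheme $\init_{w}U$ preserves this.

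The main obstacle, modest as it is, lies in the first part: one has to use the positivity hypothesis $w\in \Z_{\geq 0}^{n}$ in an essential way to force the dichotomy $\varphi^{\ast} f_{j}\in\{0,\,\init_{w} f_{j}\}$, and it is the assumption on the generators $f_{1},\dots,f_{n-d}$ (rather than any ad hoc ones) that makes the conclusion that $\varphi^{\ast} f_{j}$ vanishes on $\init_{w}U$ available without further work. The equivariance statement is then a purely formal consequence of the pullback calculation and requires no further ideas.
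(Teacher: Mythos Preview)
Your argument is correct, but it takes a different route from the paper's. The paper does not verify the two claims directly; instead it postpones the proof until after constructing, in Subsection~\ref{morphismsforcomputingvolumes}, the $R$-scheme morphism $\psi_w\colon \fX^w\to\fX$ (the restriction of $\varphi_w\colon \bG_{m,R}^n\to\bA_R^n$, $x_i\mapsto \pi^{w_i}x_i$). Since $\varphi$ is precisely the special fiber of $\varphi_w$, and since the special fibers of $\fX^w$ and $\fX$ are $(\init_w U)^w_\ell$ and $X$ (with trivial action) respectively, both the factorization through $X$ and the $\mu_\ell$-equivariance fall out at once from the equivariance of $\psi_w$ established in \autoref{varphiwequivariant}. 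Your proof, by contrast, works entirely over $k$: you compute $\varphi^\ast f_j$ explicitly and use the dichotomy $\varphi^\ast f_j\in\{0,\init_w f_j\}$ forced by $w\in\Z_{\geq 0}^n$, and you check equivariance by the coordinate identity $0^{w_i}\xi^{w_i}=0^{w_i}$. This is more elementary and entirely self-contained, requiring none of the $R$-scheme machinery; the paper's approach, on the other hand, is natural in context because $\fX^w$ and $\psi_w$ are needed anyway for the change-of-variables computation, so the proposition comes essentially for free once that apparatus is in place.
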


In this section, we will prove the following theorem and its corollary.

\begin{theorem}
\label{zetafunctionschon}
Let $V_u$ be the subscheme of $\bG_{m,k}^n$ defined by $(x_1, \dots, x_n)^u - 1$. For any $w \in \Trop(U) \cap \Z_{\geq 0}^n$, endow the initial degeneration $\init_w U$ and the intersection $V_u \cap \init_w U$ with the $X$-scheme structure given by \autoref*{initialdegenerationXschemestructure}.

Then there exists a function $\ordjac: \Trop(U) \cap \Z_{\geq 0}^n \to \Z$ that satisfies the following.

\begin{enumerate}[(a)]

\item If $w = (w_1, \dots, w_n) \in \Trop(U) \cap \Z_{\geq 0}^n$ and $f_1, \dots, f_{n-d} \in k[x_1, \dots, x_n]$ are a generating set for the ideal of $X$ such that $\init_w f_1, \dots, \init_w f_{n-d} \in k[x_1^{\pm 1}, \dots, x_n^{\pm 1}]$ generate the ideal of $\init_w U$, then
\[
	\ordjac(w) = w_1 + \dots + w_n - (\trop(f_1)(w) + \dots + \trop(f_{n-d})(w)) \in \Z.
\]

\item We have that
\[
	Z^{\hat\mu}_{X,u}(T) = \sum_{w \in \Trop(U) \cap (\Z_{\geq 0}^n \setminus \{0\})} [(V_u \cap \init_w U)^w_{u \cdot w} / X, \hat\mu] \bL^{-d-\ordjac(w)} T^{u \cdot w},
\]
and
\[
	Z^\naive_{X,u}(T) = \sum_{w \in \Trop(U) \cap \Z_{\geq 0}^n} [\init_w U / X] \bL^{-d-\ordjac(w)} T^{u \cdot w}.
\]

\end{enumerate}
\end{theorem}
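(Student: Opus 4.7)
My strategy is to express both zeta functions as integrals over the Greenberg scheme $\sG(\fX)$, where $\fX = X\times_k\Spec R$, and to decompose the integration domain by tropical type. By \autoref{coefficientDLfromvolume} and \autoref{coefficientIgfromvolume}, the $T^\ell$-coefficients of $Z^{\hat\mu}_{X,u}(T)$ and $Z^{\naive}_{X,u}(T)$ are the $\mu_\ell$-equivariant and ordinary motivic volumes of the cylinders where $f = (x_1\cdots x_n)^u$ has order $\ell$, together with the further condition $\operatorname{ac}(f) = 1$ in the Denef-Loeser case. For each $w\in\Z_{\geq 0}^n$, the subset
\[
C_w = \{\gamma\in\sG(\fX)\mid \ord_{x_i}(\gamma) = w_i\text{ for all }i\}
\]
is a $\mu_\ell$-invariant cylinder on which $\ord_f\equiv u\cdot w$. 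Arcs outside $\bigsqcup_w C_w$ have some $x_i(\gamma) = 0$ and hence $\ord_f = \infty$, contributing nothing; and each arc in $C_w$ induces a $k(\gamma)[\pi^{-1}]$-point of $\init_w U$, so $C_w\neq\emptyset$ forces $w\in\Trop(U)$. This gives the decomposition of the cylinders underlying both zeta functions.

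\textbf{Twisting and change of variables.} For each $w\in\Trop(U)\cap\Z_{\geq 0}^n$, define $\fY_w\subset\bG_{m,R}^n$ as the scheme-theoretic pullback of $\fX\subset\bA_R^n$ along the $R$-morphism $\bG_{m,R}^n\to\bA_R^n$, $x_i\mapsto\pi^{w_i}x_i$, and let $h_w:\fY_w\to\fX$ be the induced projection. For any choice of generators $f_1,\dots,f_{n-d}$ as in the hypothesis, $\fY_w$ is cut out by $g_i := \pi^{-\trop(f_i)(w)}f_i(\pi^{w_1}x_1,\dots,\pi^{w_n}x_n)\in R[x_1^\pm,\dots,x_n^\pm]$, each reducing modulo $\pi$ to $\init_w f_i$; hence $\fY_w$ is $R$-flat with smooth special fiber $\init_w U$ of pure relative dimension $d$, and so is itself smooth over $R$. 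For every extension $k'/k$, the map $h_w$ induces a bijection $\sG(\fY_w)(k')\to C_w(k')$ with inverse $\gamma\mapsto(\pi^{-w_i}x_i(\gamma))_i$, well-defined precisely because $\ord_{x_i}(\gamma) = w_i$. A conormal-sequence computation then gives $\ordjac_{h_w}\equiv\sum_i w_i - \sum_i \trop(f_i)(w)$ on $\sG(\fY_w)$: the coordinate scaling contributes $\pi^{\sum w_i}$ to the top differential, while the relation $f_i = \pi^{\trop(f_i)(w)}g_i$ contributes $\pi^{\sum\trop(f_i)(w)}$ to the wedge of conormal generators. Setting $\ordjac(w)$ equal to this constant proves part (a) and makes the definition intrinsic, since $h_w$ is.

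\textbf{Special-fiber map and assembly.} Reducing $h_w$ modulo $\pi$ gives the map $\init_w U = (\fY_w)_0 \to X = (\fX)_0$ induced by $x_i\mapsto 0^{w_i}x_i$, which is precisely $\varphi|_{\init_w U}$ from \autoref{initialdegenerationXschemestructure}; the $(\mu_{u\cdot w},w)$-action scales $x_i$ by $\xi^{w_i}$, which is trivial when $w_i = 0$ and annihilates the vanishing coordinate when $w_i > 0$, so $\varphi$ is equivariant with trivial target action, yielding \autoref{initialdegenerationXschemestructure}. Then Hartmann's equivariant change-of-variables formula, applied to $h_w$ with $G = \mu_{u\cdot w}$, converts the volume of $C_w$ into $\bL^{-\ordjac(w)}\mu^{\mu_{u\cdot w}}_{\fY_w}(\sG(\fY_w)) = \bL^{-\ordjac(w)-d}[\init_w U,\mu_{u\cdot w}]$, where the action on $\init_w U$ — transported from $\sG(\fX)$ via $\tilde x_i = \pi^{-w_i}x_i$ — is precisely the $(\mu_{u\cdot w},w)$-action. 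The further restriction $\operatorname{ac}(f) = 1$ cuts the special fiber down to $V_u\cap\init_w U$, giving the class $[(V_u\cap\init_w U)^w_{u\cdot w},\mu_{u\cdot w}]\bL^{-\ordjac(w)-d}$. Summing over $w$ with weights $T^{u\cdot w}$ yields both formulas.

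\textbf{Main obstacle.} The hardest step will be tracking the $\mu_\ell$-action carefully. Although the action on $\sG(\fX)$ originating from $\pi\mapsto\xi^{-1}\pi$ is trivial on $\fX_0 = X$, the change of variables $x_i = \pi^{w_i}\tilde x_i$ transports it to a nontrivial cocharacter twist $\tilde x_i\mapsto\xi^{w_i}\tilde x_i$ on the special fiber of $\fY_w$ — exactly the $(\mu_{u\cdot w},w)$-action. The bookkeeping required for this twist, for which Section 4 is designed, is the technical crux; once it is carried out, both formulas drop out simultaneously.
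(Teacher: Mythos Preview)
Your strategy coincides with the paper's: decompose the relevant cylinders in $\sG(\fX)$ by tropical type, use the scaling map $x_i\mapsto\pi^{w_i}x_i$ to transport each fiber $\trop^{-1}(w)$ onto the full Greenberg scheme of a smooth $R$-model with special fiber $\init_w U$, compute the jacobian via the conormal sequence, and apply Hartmann's equivariant change of variables. Your identification of the transported $\mu_\ell$-action with the $(\mu_{u\cdot w},w)$-action is correct and is indeed the technical crux.

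One point needs repair. Your $\fY_w$, defined as the \emph{scheme-theoretic pullback} $\varphi_w^{-1}(\fX)$, is cut out by $\varphi_w^*(f_i)=\pi^{\trop(f_i)(w)}g_i$, not by the $g_i$ themselves; when some $\trop(f_i)(w)>0$ these ideals differ and the pullback is not flat. For instance, $X=V(x_1-x_2)\subset\bA_k^2$ with $w=(1,1)$ gives $\varphi_w^{-1}(\fX)=V(\pi(x_1-x_2))\subset\bG_{m,R}^2$, whose special fiber is all of $\bG_{m,k}^2$ rather than $\init_w U$. Since smoothness over $R$ (needed for the change of variables) entails flatness, this is not a harmless abuse. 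The paper defines $\fX^w$ as the flat $R$-closure of the generic fiber inside $\bG_{m,R}^n$ and then proves (\autoref{correctspecialfibergivesflat} and \autoref{grobnerflatness}) that this coincides with the scheme cut out by the $g_i$. Either adopt that definition, or define $\fY_w$ directly by the $g_i$ and supply the short flatness argument; with that fix your sketch goes through and matches the paper.
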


\begin{remark}
The classes above are well defined by Propositions \ref*{initialdegenerationinvariant}, \ref*{binomialcharacterinvariant}, and \ref*{initialdegenerationXschemestructure}.
\end{remark}

Let $Z^{\hat\mu}_{X,u,k}(T) \in \sM_k^{\hat\mu}\llbracket T \rrbracket$ be the power series obtained by pushing forward each coefficient of $Z^{\hat\mu}_{X,u}(T)$ along the structure morphism of $X$, and if the origin of $\bA_k^n$ is contained in $X$, let $Z^{\hat\mu}_{X,u,0}(T) \in \sM_k^{\hat\mu}\llbracket T \rrbracket$ be the power series obtained by pulling back each coefficient of $Z^{\hat\mu}_{X,u}(T)$ along the inclusion of the origin into $X$.

\begin{corollary}
\label{tropicalzetaformulahomogeneous}
Again let $V_u$ be the subscheme of $\bG_{m,k}^n$ defined by $(x_1, \dots, x_n)^u - 1$. Suppose there exists $v \in \Z^n$ such that $u \cdot v > 0$ and such that for all $w \in \Z^n$,
\[
	\init_w U = \init_{w+v} U.
\]
Then for all $w \in \Trop(U) \cap (\Z_{\geq 0}^n \setminus \{0\})$, we have that $V_u \cap \init_w U$ is invariant under the $(\mu_{u \cdot v}, v)$-action, and there exists a function $\ordjac: \Trop(U) \cap \Z_{\geq 0}^n \to \Z$ that satisfies the following.
\begin{enumerate}[(a)]

	\item If $w = (w_1, \dots, w_n) \in \Trop(U) \cap \Z_{\geq 0}^n$ and $f_1, \dots, f_{n-d} \in k[x_1, \dots, x_n]$ are a generating set for the ideal of $X$ such that $\init_w f_1, \dots, \init_w f_{n-d} \in k[x_1^{\pm 1}, \dots, x_n^{\pm 1}]$ generate the ideal of $\init_w U$, then
\[
	\ordjac(w) = w_1 + \dots + w_n - (\trop(f_1)(w) + \dots + \trop(f_{n-d})(w)) \in \Z.
\]

	\item We have that
\[
	Z^{\hat\mu}_{X,u,k}(T) = \sum_{w \in \Trop(U) \cap (\Z_{\geq 0}^n \setminus \{0\})} [(V_u \cap \init_w U)^v_{u \cdot v}, \hat\mu] \bL^{-d-\ordjac(w)} T^{u \cdot w}.
\]

	\item If the origin of $\bA_k^n$ is contained in $X$, then
\[
	Z^{\hat\mu}_{X,u,0}(T) = \sum_{w \in \Trop(U) \cap \Z_{> 0}^n} [(V_u \cap \init_w U)^v_{u \cdot v}, \hat\mu] \bL^{-d-\ordjac(w)} T^{u \cdot w}.
\]

\end{enumerate}
\end{corollary}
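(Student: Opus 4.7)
The plan is to deduce the corollary from \autoref*{zetafunctionschon} by (i) verifying the stated $(\mu_{u \cdot v}, v)$-invariance of $V_u \cap \init_w U$, (ii) pushing forward or pulling back the formula of the theorem along the structure morphism of $X$ or the inclusion of the origin into $X$, and (iii) exchanging the co-character $w$ appearing in the theorem for the co-character $v$ appearing in the corollary. Part (a) is immediate from \autoref*{zetafunctionschon}(a).

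For (i), setting $w = 0$ in the homogeneity hypothesis gives $U = \init_v U$, so $U$ is invariant under the $(\bG_m, v)$-action on $\bG_{m,k}^n$; equivalently, the ideal of $U$ is $v$-homogeneous. The initial ideal of a $v$-homogeneous ideal with respect to any weight is again $v$-homogeneous, so each $\init_w U$ is $(\bG_m, v)$-invariant. Combined with the $(\mu_{u \cdot v}, v)$-invariance of $V_u$ supplied by \autoref*{binomialcharacterinvariant}, this yields the claimed invariance of $V_u \cap \init_w U$.

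The crux is the identity
\[
[(V_u \cap \init_w U)^w_{u \cdot w}, \hat\mu] = [(V_u \cap \init_w U)^v_{u \cdot v}, \hat\mu] \in K_0^{\hat\mu}(\Var_k)
\]
for each $w \in \Trop(U) \cap (\Z_{\geq 0}^n \setminus \{0\})$. Set $a = u \cdot v$ and $b = u \cdot w$, both positive. I would first apply \autoref*{initdegscalingcharacteraction} to rewrite both sides as classes of order-$ab$ actions, with co-characters $aw$ and $bv$, respectively. The difference $aw - bv$ lies in $u^\perp$, and combining the homogeneity hypothesis iterated $b$ times with the scaling invariance $\init_{aw} U = \init_w U$ (which holds because scaling a weight by a positive integer does not change which monomials achieve the $w$-minimum) yields $\init_{aw - bv} U = \init_w U$. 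Since $\init_w U$ is $(\bG_m, w)$-invariant, and hence $(\mu_{ab}, aw)$-invariant, \autoref*{moninitialdeg} applied with the roles of $w$ and $v$ there played by $aw - bv$ and $aw$ produces the identity, since $aw - (aw - bv) = bv$.

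With this identity, part (b) follows by pushing \autoref*{zetafunctionschon}(b) forward coefficient-wise along $X \to \Spec(k)$ and substituting. For part (c), pull back each coefficient along $\{0\} \hookrightarrow X$; the $X$-scheme structure on $V_u \cap \init_w U$ is via the map $\varphi$ of \autoref*{initialdegenerationXschemestructure}, whose pullback sends $x_i$ to $0^{w_i} x_i$. If some $w_i = 0$, then $\varphi$ acts as the identity on the $i$-th coordinate, so its image lies outside the coordinate hyperplane $\{x_i = 0\}$ and the pullback over the origin is empty; if $w \in \Z_{>0}^n$, then $\varphi$ factors through the origin and the full class is retained. The main obstacle is the co-character swap of the third paragraph, which requires carefully combining the scaling from \autoref*{initdegscalingcharacteraction} with the $u^\perp$-translation of \autoref*{moninitialdeg}, and leveraging the homogeneity hypothesis to identify $\init_w U$ with $\init_{aw - bv} U$.
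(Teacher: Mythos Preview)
Your proposal is correct and follows essentially the same route as the paper: the co-character swap in your third paragraph is precisely the paper's \autoref*{laststeptogetzetaformulacorollary}, with your explicit choice $\ell = a = u \cdot v$, $\ell' = b = u \cdot w$, $w' = aw - bv$ in place of the paper's existential ``there exist $\ell, \ell', w'$'', and parts (b), (c) are obtained from \autoref*{zetafunctionschon} via pushforward and \autoref*{initialformmappedtoorigin} just as you describe. Your direct $v$-homogeneity argument for (i) is a slight addition (the paper extracts the invariance as a byproduct of the swap), but this does not change the overall strategy.
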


\subsection{Proof of \autoref*{tropicalzetaformulahomogeneous}}

Before we prove \autoref*{initialdegenerationXschemestructure} and \autoref*{zetafunctionschon}, we will show that they imply \autoref*{tropicalzetaformulahomogeneous}.

\begin{proposition}
\label{initialformmappedtoorigin}
Let $w \in \Trop(U) \cap \Z_{\geq 0}^n$, suppose that the origin of $\bA_k^n$ is contained in $X$, and endow $\init_w U$ with the $X$-scheme structure given by \autoref*{initialdegenerationXschemestructure}. Then
\begin{enumerate}[(a)]

\item if $w \in \Z_{> 0}^n$, the fiber of $\init_w U$ over the origin of $\bA_k^n$ is equal to $\init_w U$,

\item and if $w \notin \Z_{>0}^n$, the fiber of $\init_w U$ over the origin of $\bA_k^n$ is empty.

\end{enumerate}
\end{proposition}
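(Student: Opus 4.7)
The plan is to unwind the definition of the $X$-scheme structure on $\init_w U$ furnished by \autoref*{initialdegenerationXschemestructure} and read off the fiber over the origin directly from the formula for $\varphi$.

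Recall that the structure map $\init_w U \to X$ is the restriction of $\varphi : \bG_{m,k}^n \to \bA_k^n$, whose pullback is $x_i \mapsto 0^{w_i} x_i$, i.e.\ $\varphi$ sends a point with coordinates $(a_1, \dots, a_n) \in \bG_{m,k}^n$ to the point of $\bA_k^n$ with $i$-th coordinate $a_i$ if $w_i = 0$ and $0$ if $w_i > 0$. The origin of $\bA_k^n$ is the point where every coordinate vanishes.

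First I would handle case (a): if $w \in \Z_{>0}^n$, then $0^{w_i} = 0$ for every $i$, so every coordinate of $\varphi(a_1, \dots, a_n)$ is zero. Hence $\varphi$ sends every point of $\init_w U$ to the origin, and the fiber over the origin is all of $\init_w U$. Scheme-theoretically, this is just the statement that the pullback of the maximal ideal of the origin under $\varphi^*$ lies in the kernel of $k[x_1, \dots, x_n] \to k[x_1^{\pm 1}, \dots, x_n^{\pm 1}]/I(\init_w U)$, which is immediate since each $x_i$ pulls back to $0$.

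For case (b), if $w \notin \Z_{>0}^n$, pick an index $i$ with $w_i = 0$; then $\varphi^*(x_i) = x_i$. On $\bG_{m,k}^n$ the function $x_i$ is a unit, so $x_i - 0 = x_i$ is a unit on $\init_w U$. Thus the ideal of the origin of $\bA_k^n$ pulls back to the unit ideal on $\init_w U$, which means the fiber is empty. The only subtlety to check is that $\init_w U$ is indeed nonempty in interesting cases so that the statement is not vacuous, but this is guaranteed by $w \in \Trop(U)$; in any event, both parts of the statement hold as written. No step is really an obstacle here — the proof is essentially a one-line unwinding of the definition of $\varphi$.
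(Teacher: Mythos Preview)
Your proposal is correct and is precisely the argument the paper has in mind: the paper's own proof is the single line ``This is a direct consequence of the $X$-scheme structure of $\init_w U$,'' and what you wrote is exactly the unwinding of that structure via $\varphi^*(x_i)=0^{w_i}x_i$.
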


\begin{proof}
This is a direct consequence of the $X$-scheme structure of $\init_w U$.
\end{proof}

Using the notation in the theorem's statement, \autoref*{zetafunctionschon} implies
\[
	Z^{\hat\mu}_{X,u,k}(T) = \sum_{w \in \Trop(U) \cap (\Z_{\geq 0}^n \setminus \{0\})} [(V_u \cap \init_w U)^w_{u \cdot w}, \hat\mu] \bL^{-d-\ordjac(w)} T^{u \cdot w},
\]
and if in addition, the origin of $\bA_k^n$ is contained in $X$, \autoref*{initialformmappedtoorigin} and \autoref*{zetafunctionschon} imply
\[
	Z^{\hat\mu}_{X,u,0}(T) = \sum_{w \in \Trop(U) \cap \Z_{> 0}^n} [(V_u \cap \init_w U)^w_{u \cdot w}, \hat\mu] \bL^{-d-\ordjac(w)} T^{u \cdot w}.
\]
Thus \autoref*{tropicalzetaformulahomogeneous} follows from \autoref*{zetafunctionschon} and the following proposition.

\begin{proposition}
\label{laststeptogetzetaformulacorollary}
Suppose there exists $v \in \Z^n$ such that $u \cdot v > 0$ and such that for all $w \in \Z^n$,
\[
	\init_w U = \init_{w+v} U.
\]
Let $w \in \Trop(U) \cap (\Z_{\geq 0}^n \setminus \{0\})$, and let $V_u$ be the subscheme of $\bG_{m,k}^n$ defined by $(x_1, \dots, x_n)^u-1$. Then $V_u \cap \init_w U$ is invariant under the $(\mu_{u \cdot v}, v)$-action and
\[
	[(V_u \cap \init_w U)^w_{u \cdot w}, \hat\mu] = [(V_u \cap \init_w U)^v_{u \cdot v}, \hat\mu] \in K_0^{\hat\mu}(\Var_k).
\]
\end{proposition}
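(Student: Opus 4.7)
The plan is to combine the hypothesis with Propositions \ref*{initialdegenerationinvariant}, \ref*{binomialcharacterinvariant}, \ref*{initdegscalingcharacteraction}, and \ref*{moninitialdeg}. I first address the invariance. Specializing the hypothesis at the zero vector yields $U = \init_0 U = \init_v U$, so by \autoref*{initialdegenerationinvariant}, $U$ is invariant under the $\bG_{m,k}$-action via the cocharacter $v$. A short check shows that initial degeneration inherits this invariance: for $t \in \bG_{m,k}$ acting via $v$ and $f$ in the defining ideal of $U$, the monomial support of $t \cdot f$ equals that of $f$, so $\init_w(t \cdot f) = t \cdot \init_w f$ still lies in the ideal of $\init_w U$. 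Hence $\init_w U$, and therefore $V_u \cap \init_w U$ (combining with \autoref*{binomialcharacterinvariant}), is invariant under the $(\mu_{u \cdot v}, v)$-action.

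For the class equality, set $m = u \cdot w$ and $\ell = u \cdot v$, both positive integers. Two applications of \autoref*{initdegscalingcharacteraction} (scaling the cocharacter $w$ by $\ell$ and the cocharacter $v$ by $m$, using the invariances just established) reduce the desired identity in $K_0^{\hat\mu}(\Var_k)$ to
\[
\bigl[(V_u \cap \init_w U)^{\ell w}_{m\ell}, \mu_{m\ell}\bigr] = \bigl[(V_u \cap \init_w U)^{m v}_{m\ell}, \mu_{m\ell}\bigr] \in K_0^{\mu_{m\ell}}(\Var_k).
\]
The key step is to introduce the bridging element $w_1 := \ell w - m v$. It lies in $u^\perp \cap \Z^n$ since $\langle u, w_1\rangle = \ell m - m\ell = 0$, and it satisfies $\init_{w_1} U = \init_w U$: iterating the periodicity hypothesis $m$ times gives $\init_{w_1} U = \init_{w_1 + m v} U = \init_{\ell w} U$, and positive-scaling invariance of initial degenerations gives $\init_{\ell w} U = \init_w U$. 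I now apply \autoref*{moninitialdeg} with its subscript $w$ set to $w_1$ and its cocharacter $v$ set to $\ell w$; the remaining hypothesis, that $\init_{w_1} U = \init_w U$ is invariant under the $(\mu_{m\ell}, \ell w)$-action, follows from \autoref*{initialdegenerationinvariant} together with \autoref*{initdegscalingcharacteraction}. The conclusion yields exactly the displayed equality, since $\ell w - w_1 = m v$.

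The main obstacle is spotting the correct bridging cocharacter $w_1$. For \autoref*{moninitialdeg} to apply it must simultaneously lie in $u^\perp$ and produce the same initial degeneration as $w$, and these two requirements leave essentially no slack. Of the two $u$-perpendicular candidates $\pm(\ell w - m v)$, only the sign $w_1 = \ell w - m v$ gives $\init_{w_1} U = \init_w U$, because after one application of the periodicity hypothesis the residual term must be \emph{positively} proportional to $w$ so that the scaling invariance $\init_{c w} U = \init_w U$ for $c > 0$ can finish the identification. Once this correct sign is chosen, the rest is a formal manipulation of the preceding propositions.
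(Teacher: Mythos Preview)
Your proof is correct and follows essentially the same route as the paper's: both introduce the bridging cocharacter $w_1 = (u\cdot v)\,w - (u\cdot w)\,v \in u^\perp$, use the periodicity hypothesis and positive-scaling invariance to identify $\init_{w_1} U = \init_w U$, and then invoke \autoref{moninitialdeg} together with two applications of \autoref{initdegscalingcharacteraction} to finish. The only noticeable difference is in how you establish the $(\mu_{u\cdot v},v)$-invariance of $\init_w U$: you argue directly that $U=\init_v U$ is $\bG_{m,k}$-invariant via $v$ and that this invariance passes to every initial degeneration, whereas the paper extracts the invariance as a byproduct of the \autoref{moninitialdeg} chain; your argument is a bit more transparent but relies on the (easy) observation that the proof of \autoref{initialdegenerationinvariant} works verbatim for the full $\bG_{m,k}$-action, not just $\mu_\ell$.
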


\begin{proof}
Because $u \cdot w > 0$, there exist $\ell, \ell' \in \Z_{>0}$ and $w' \in \Z^n$ such that $u \cdot w' = 0$ and
\[
	\ell w = \ell' v + w'.
\]
By \autoref*{initdegscalingcharacteraction}, $V_u \cap \init_w U$ is invariant under the $(\mu_{u \cdot \ell w}, \ell w)$-action and
\[
	[(V_u \cap \init_w U)^w_{u \cdot w}, \hat\mu] = [(V_u \cap \init_w U)^{\ell w}_{u \cdot \ell w}, \hat\mu] \in K_0^{\hat\mu}(\Var_k).
\]
By the hypotheses on $v$, we have that
\[
	\init_{w'} U = \init_{\ell w} U,
\]
so by \autoref*{initialdegenerationinvariant}, $\init_{w'}U$ is invariant under the $(\mu_{u \cdot \ell w}, \ell w)$-action. Then by \autoref*{moninitialdeg}, $\init_{w'} U$ is invariant under the $(\mu_{u \cdot \ell' v}, \ell' v)$-action, and noting that $u \cdot \ell w = u \cdot \ell' v$,
\[
	[(V_u \cap \init_{w'} U)^{\ell w}_{u \cdot \ell w}, \mu_{u \cdot \ell w}] = [(V_u \cap \init_{w'} U)^{\ell' v}_{u \cdot \ell' v}, \mu_{u \cdot \ell' v}] \in K_0^{\mu_{u \cdot \ell' v}}(\Var_k).
\]
Again by \autoref*{initdegscalingcharacteraction}, $V_u \cap \init_{w'} U$ is invariant under the $(\mu_{u \cdot v}, v)$-action and
\[
	[(V_u \cap \init_{w'} U)^{\ell' v}_{u \cdot \ell' v}, \hat\mu] = [(V_u \cap \init_{w'} U)^v_{u \cdot v}, \hat\mu] \in K_0^{\hat\mu}(\Var_k).
\]
All together, noting that $\init_w U = \init_{\ell w} U = \init_{w'} U$,
\begin{align*}
	[(V_u \cap \init_w U)^w_{u \cdot w}, \hat\mu] &= [(V_u \cap \init_w U)^{\ell w}_{u \cdot \ell w}, \hat\mu]\\
	&= [(V_u \cap \init_{w'} U)^{\ell w}_{u \cdot \ell w}, \hat\mu]\\
	&= [(V_u \cap \init_{w'} U)^{\ell' v}_{u \cdot \ell' v}, \hat\mu]\\
	&= [(V_u \cap \init_{w'} U)^v_{u \cdot v}, \hat\mu]\\
	&= [(V_u \cap \init_w U)^v_{u \cdot v}, \hat\mu].
\end{align*}
\end{proof}

\subsection{Fibers of tropicalization}

For the remainder of Section \ref*{motiviczetafunctionsforschonvarieties}, fix $\ell \in \Z_{>0}$ and endow $R$ with the $\mu_\ell$-action where each $\xi \in \mu_{\ell}$ acts on $R$ by the $\pi$-adically continuous $k$-morphism $\pi \mapsto \xi^{-1}\pi$. Let $\bA_R^n = \Spec(R[x_1, \dots, x_n])$, let $\fX = X \times_k \Spec(R) \subset \bA_R^n$, and endow $\bA_R^n$ (resp. $\fX$) with the $\mu_\ell$-action induced by the $\mu_\ell$-action on $R$ and the trivial $\mu_\ell$-action on $\bA_k^n$ (resp. $X$).

Let $A_\ell \subset \sG(\fX)$ be the subset of arcs where $(x_1, \dots, x_n)^u|_\fX$ has order $\ell$, and let $A_{\ell, 1} \subset \sG(\fX)$ be the subset of arcs where $(x_1, \dots, x_n)^u|_\fX$ has order $\ell$ and angular component 1.

Let $\trop: \sG(\fX) \to (\Z_{\geq 0} \cup \{\infty\})^n$ be the function $(\ord_{x_1|_\fX}, \dots, \ord_{x_n|_\fX})$. Any arc that tropicalizes to a point in $\Z_{\geq 0}^n$ has generic point in $U \times_k \Spec(R)$, so
\[
	\trop(\sG(\fX)) \cap \Z_{\geq 0}^n \subset \Trop(U).
\]
Also because $u \in \Z_{>0}^n$ and $\ell \neq 0$,
\[
	\trop(A_\ell) \subset \trop(\sG(\fX)) \cap (\Z_{\geq 0}^n \setminus \{0\}) \subset \Trop(U) \cap (\Z_{\geq 0}^n \setminus \{0\}).
\]
Thus
\[
	A_\ell = \bigcup_{\substack{w \in \Trop(U) \cap (\Z_{\geq 0}^n \setminus \{0\})\\ u \cdot w = \ell}} \trop^{-1}(w).
\]
This union is disjoint, and because $u \in \Z_{> 0}^n$, it is also finite. By \autoref*{orderangularcomponentgreenbergscheme}, for each $w \in \Z_{\geq 0}^n$, we have that the fiber $\trop^{-1}(w)$ and the intersection $\trop^{-1}(w) \cap A_{\ell, 1}$ are $\mu_\ell$-invariant cylinders in $\sG(\fX)$. We have thus proved the following.

\begin{proposition}
\label{summingvolumesoffibersgivescoefficient}
We have that
\[
	\mu_{\fX}^{\mu_\ell}(A_{\ell, 1}) = \sum_{\substack{w \in \Trop(U) \cap (\Z_{\geq 0}^n \setminus \{0\})\\ u \cdot w = \ell}} \mu_{\fX}^{\mu_\ell} (\trop^{-1}(w) \cap A_{\ell, 1}),
\]
and
\[
	\mu_{\fX}(A_\ell) = \sum_{\substack{w \in \Trop(U) \cap (\Z_{\geq 0}^n \setminus \{0\})\\ u \cdot w = \ell}} \mu_{\fX}(\trop^{-1}(w)).
\]
\end{proposition}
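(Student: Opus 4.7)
The plan is to observe that the proposition follows by combining a disjoint decomposition already exhibited in the preceding paragraph with finite additivity of the (equivariant) motivic measure on invariant cylinders. There is really no serious mathematical content beyond bookkeeping, since the paragraph preceding the statement has already done the conceptual work.

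First, I would reuse the observation that
\[
	A_\ell \;=\; \bigsqcup_{\substack{w \in \Trop(U) \cap (\Z_{\geq 0}^n \setminus \{0\})\\ u \cdot w = \ell}} \trop^{-1}(w),
\]
where the union is finite because $u \in \Z_{>0}^n$ forces only finitely many $w \in \Z_{\geq 0}^n$ to satisfy $u \cdot w = \ell$. Intersecting both sides with $A_{\ell, 1}$ yields the analogous finite disjoint decomposition of $A_{\ell, 1}$ into the pieces $\trop^{-1}(w) \cap A_{\ell, 1}$.

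Second, I would invoke \autoref{orderangularcomponentgreenbergscheme} to justify that each $\trop^{-1}(w)$ and each $\trop^{-1}(w) \cap A_{\ell,1}$ is a $\mu_\ell$-invariant cylinder in $\sG(\fX)$: the tropicalization map $\trop$ is the tuple of order functions $(\ord_{x_1|_\fX}, \dots, \ord_{x_n|_\fX})$, each of which is $\mu_\ell$-invariant by that proposition, and the angular-component-equals-$1$ condition cutting out $A_{\ell, 1}$ is likewise $\mu_\ell$-invariant on the locus where $(x_1 \cdots x_n)^u|_\fX$ has order $\ell$. Both cylinder and invariance properties were in fact asserted by the author in the sentence immediately before the proposition.

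Finally, I would conclude by applying finite additivity of $\mu_\fX^{\mu_\ell}$ and $\mu_\fX$ over disjoint finite unions of (invariant) cylinders. The only potential obstacle is confirming that the decomposition genuinely has finitely many nonempty pieces—this is precisely where the assumption $u \in \Z_{>0}^n$ is used—so once that is noted, the two displayed identities drop out immediately.
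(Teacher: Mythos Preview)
Your proposal is correct and matches the paper's approach exactly: the paper simply writes ``We have thus proved the following'' after the paragraph establishing the finite disjoint decomposition of $A_\ell$ and the $\mu_\ell$-invariance of the pieces via \autoref{orderangularcomponentgreenbergscheme}, so the result is just finite additivity of the (equivariant) motivic measure. The only minor slip is notational: the function is $(x_1,\dots,x_n)^u|_\fX$, not $(x_1\cdots x_n)^u|_\fX$.
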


\subsection{Morphisms for computing volumes}
\label{morphismsforcomputingvolumes}

Throughout Subsection \ref*{morphismsforcomputingvolumes}, we will fix some $w = (w_1, \dots, w_n) \in \Trop(U) \cap (\Z_{\geq 0}^n \setminus \{0\})$ such that $u \cdot w = \ell$. We will construct a smooth, pure relative dimension $d$, finite type, separated $R$-scheme $\fX^w$ with good $\mu_\ell$-action making the structure morphism equivariant, and we will construct a $\mu_\ell$-equivariant morphism $\psi_w: \fX^w \to \fX$ that will eventually be used to compute the motivic volumes of $\trop^{-1}(w) \cap A_{\ell, 1}$ and $\trop^{-1}(w)$.

Let $\bG_{m,R}^n = \Spec(R[x_1^{\pm 1}, \dots, x_n^{\pm 1}]) = \bG_{m,k}^n \times_k \Spec(R)$, and endow it with the $\mu_\ell$-action induced by the $\mu_\ell$-action on $\Spec(R)$ and the $(\mu_\ell, w)$-action on $\bG_{m,k}^n$. Let $\varphi_w: \bG_{m,R}^n \to \bA_R^n$ be the $R$-scheme morphism corresponding to the $R$-algebra morphism
\[
	\varphi_w^*: \Spec(R[x_1, \dots, x_n]) \to \Spec(R[x_1^{\pm 1}, \dots, x_n^{\pm 1}]): x_i \mapsto \pi^{w_i}x_i.
\]

\begin{proposition}
\label{varphiwequivariant}
The morphism $\varphi_w: \bG_{m,R}^n \to \bA_R^n$ is $\mu_\ell$-equivariant.
\end{proposition}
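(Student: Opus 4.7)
The plan is to verify the equivariance by a direct computation on generators of the coordinate rings, checking that the pullback $\varphi_w^*$ intertwines the two $\mu_\ell$-actions. Since both $\bA_R^n$ and $\bG_{m,R}^n$ are affine, it suffices to show that the ring map $\varphi_w^*: R[x_1,\dots,x_n] \to R[x_1^{\pm 1}, \dots, x_n^{\pm 1}]$ commutes with the $\mu_\ell$-action for each individual $\xi \in \mu_\ell$, i.e., that $\xi^*_{\bG_{m,R}^n} \circ \varphi_w^* = \varphi_w^* \circ \xi^*_{\bA_R^n}$ as $k$-algebra maps.

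The actions are determined on generators as follows. On $\bA_R^n$, the element $\xi$ acts by $\pi \mapsto \xi^{-1}\pi$ and $x_i \mapsto x_i$ (trivial on $\bA_k^n$). On $\bG_{m,R}^n$, by construction and by \autoref*{characterctionpullback} applied to $u = e_i$, the element $\xi$ acts by $\pi \mapsto \xi^{-1}\pi$ and $x_i \mapsto \xi^{w_i} x_i$. Evaluating both compositions on $\pi$ simply gives $\xi^{-1}\pi$, so this is automatic. Evaluating on $x_i$ is where the crucial cancellation occurs: one composition gives
\[
    \xi^*_{\bG_{m,R}^n}(\varphi_w^*(x_i)) = \xi^*_{\bG_{m,R}^n}(\pi^{w_i} x_i) = (\xi^{-1}\pi)^{w_i}(\xi^{w_i} x_i) = \pi^{w_i} x_i,
\]
while the other gives
\[
    \varphi_w^*(\xi^*_{\bA_R^n}(x_i)) = \varphi_w^*(x_i) = \pi^{w_i} x_i.
\]
These agree, and the proof is complete.

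The step doing the real work is the cancellation of $\xi^{-w_i}$ (coming from the action on $\pi$) against $\xi^{w_i}$ (coming from the $(\mu_\ell, w)$-action on the torus). This is precisely the reason the $(\mu_\ell, w)$-action on $\bG_{m,k}^n$ and the $\mu_\ell$-action on $R$ were chosen with these specific conventions: the morphism $\varphi_w$, which rescales coordinates by $\pi^{w_i}$, is the natural "twist" relating them. There is no real obstacle here — the proof is essentially a one-line verification on generators — but the bookkeeping with inverses is the only detail to watch.
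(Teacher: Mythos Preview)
Your proof is correct and essentially identical to the paper's own proof: both reduce to checking $\xi^*_{\bG_{m,R}^n}\circ\varphi_w^* = \varphi_w^*\circ\xi^*_{\bA_R^n}$ on the generators $\pi$ and $x_i$, and perform exactly the same cancellation $(\xi^{-1}\pi)^{w_i}(\xi^{w_i}x_i)=\pi^{w_i}x_i$. The only cosmetic difference is that the paper handles the generator $\pi$ by invoking equivariance of the structure morphisms, whereas you check it directly.
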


\begin{proof}
Let $\xi \in \mu_\ell$, and let $\xi_1: R[x_1^{\pm 1}, \dots, x_n^{\pm 1}] \to R[x_1^{\pm 1}, \dots, x_n^{\pm 1}]$ and $\xi_2: R[x_1, \dots, x_n] \to R[x_1, \dots, x_n]$ be its actions. We need to show that
\[
	\xi_1 \circ \varphi_w^* = \varphi_w^* \circ \xi_2.
\]
Because the structure morphisms of $\bG_{m,R}^n$ and $\bA_R^n$ are $\mu_\ell$-equivariant, it is sufficient to show that if $i \in \{1, \dots, n\}$, then
\[
	\xi_1(\varphi_w^*(x_i)) = \varphi_w^*(\xi_2(x_i)),
\]
which holds because
\begin{align*}
	\xi_1(\varphi_w^*(x_i)) &= \xi_1(\pi^{w_i} x_i)\\
	&= (\xi^{-1} \pi)^{w_i}\xi^{w_i} x_i\\
	&= \pi^{w_i} x_i\\
	&= \varphi_w^*(x_i)\\
	&= \varphi_w^*(\xi_2(x_i)).
\end{align*}
\end{proof}

Now let $\fX_\eta$ be the generic fiber of $\fX$, let $\varphi_{w,\eta}: \bG_{m,K}^n \to \bA_{K}^n$ be the base change of $\varphi_w$ to the fraction field $K$ of $R$, let $\fX^w_\eta \subset \bG_{m,K}^n$ be the pre-image of $\fX_\eta$ under $\varphi_{w,\eta}$, and let $\fX^w \subset \bG_{m,R}^n$ be the unique closed subscheme of $\bG_{m,R}^n$ that is flat over $R$ and has generic fiber $\fX^w_\eta$, see for example \cite[Section 4]{Gubler}. By construction, the generic fiber of $\fX^w$ is isomorphic to $U \times_k \Spec(K)$, and its special fiber is equal to $\init_w U \subset \bG_{m,k}^n$, which is smooth by the hypotheses on $X$. Thus $\fX^w$ is smooth and pure relative dimension $d$ over $R$. Note that by uniqueness, $\fX^w$ is equal to the closed subscheme of $\varphi_w^{-1}(\fX)$ defined by its $R$-torsion ideal. Thus we have a morphism $\psi_w: \fX^w \to \fX$ induced from $\varphi_w$ by restriction.

\begin{remark}
\label{openimmersionongenericfibers}
Note that if $\psi_{w,\eta}: \fX^w_\eta \to \fX_\eta$ is the base change of $\psi_w$ to $K$, we have that $\psi_{w,\eta}$ is isomorphic to the open immersion $U \times_k \Spec(K) \to X \times_k \Spec(K)$. In particular, $\psi_w$ induces an open immersion on generic fibers.
\end{remark}

To obtain a generating set for the ideal defining $\fX^w$ in $\bG_{m,R}^n$, we first need to prove the following lemma.

\begin{lemma}
\label{correctspecialfibergivesflat}
Let $\fY$ be a finite type $R$-scheme, and let $\fY^\flat$ be the closed subscheme of $\fY$ defined by its $R$-torsion ideal. If as closed subschemes of $\fY$, the special fiber of $\fY^\flat$ is equal to the special fiber of $\fY$, then $\fY$ is a flat $R$-scheme.
\end{lemma}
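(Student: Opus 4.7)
The plan is to reduce to a local algebra statement and then exploit the fact that over the DVR $R = k\llbracket \pi \rrbracket$, flatness is equivalent to $\pi$-torsion-freeness. So it suffices to show that the $R$-torsion ideal of $\fY$ is the zero ideal. Cover $\fY$ by affine opens $\Spec(A)$, where each $A$ is a finite-type $R$-algebra, hence Noetherian. Let $T = T(A) \subset A$ denote the $\pi$-torsion ideal, so that $\fY^\flat$ is cut out on $\Spec(A)$ by $T$. The goal becomes $T = 0$.

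Next I would translate the hypothesis on special fibers. As closed subschemes of $\Spec(A)$, the special fiber of $\fY$ is cut out by $\pi A$ and that of $\fY^\flat$ is cut out by $T + \pi A$. The hypothesis that these agree therefore says $T + \pi A = \pi A$, i.e.\ $T \subset \pi A$.

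Now comes the main computation. Since $A$ is Noetherian, the ideal $T$ is finitely generated; choose generators $t_1, \ldots, t_m$ and integers $n_i$ with $\pi^{n_i} t_i = 0$. Setting $N = \max_i n_i$, one gets the uniform bound $\pi^N T = 0$. Because $T \subset \pi A$, each $t \in T$ can be written $t = \pi s$ with $s \in A$; since $\pi \cdot \pi^{n} s = \pi^{n} t = 0$ for $n$ large, $s$ itself lies in $T$. Hence $T = \pi T$, and iterating $N$ times yields
\[
T = \pi T = \pi^2 T = \cdots = \pi^N T = 0,
\]
as desired.

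The potential obstacle is the step $T = \pi T \Rightarrow T = 0$: this requires both the finite generation of $T$ (giving the uniform bound $\pi^N T = 0$) and the implication ``$\pi s$ torsion $\Rightarrow$ $s$ torsion,'' which is automatic over a DVR. Both points rest on $A$ being Noetherian, so once the reduction to the affine Noetherian case is in place, the argument is essentially a short Nakayama-style iteration.
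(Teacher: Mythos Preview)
Your proposal is correct and follows essentially the same argument as the paper's proof: reduce to the affine Noetherian case, use the hypothesis to deduce that the torsion ideal $T$ satisfies $T \subset \pi A$, observe that $t = \pi s$ with $t$ torsion forces $s$ to be torsion so $T = \pi T$, and then iterate using a uniform annihilating power of $\pi$ (from finite generation) to conclude $T = 0$. The paper's proof is slightly terser but the logical content is identical.
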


\begin{proof}
We may assume $\fY = \Spec(\fA)$ for some finite type $R$-algebra $\fA$. Let $I \subset \fA$ be the $\pi$-torsion ideal of $\fA$. Because $I$ is finitely generated, there exists $m \in \Z_{\geq 0}$ such that $\pi^mI = 0$. By the hypotheses,
\[
	I \subset \pi \fA.
\]
Let $f \in I$. Then there exists $g \in \fA$ such that $f = \pi g$. But $\pi g \in I$ implies that $g \in I$. Thus
\[
	I = \pi I = \pi^m I = 0.
\]
Therefore $\fA$ is $\pi$-torsion free, so it is flat over $R$.
\end{proof}

We can now prove the following two propositions.

\begin{proposition}
\label{grobnerflatness}
Let $f_1, \dots, f_m \in k[x_1, \dots, x_n]$ be a generating set for the ideal defining $X$ in $\bA_k^n$ such that $\init_w f_1, \dots, \init_w f_m \in k[x_1^{\pm 1}, \dots, x_n^{\pm 1}]$ form a generating set for the ideal of $\init_w U$ in $\bG_{m,k}^n$. Then
\[
	\pi^{-\trop(f_1)(w)}\varphi_w^*(f_1), \dots, \pi^{-\trop(f_m)(w)}\varphi_w^*(f_m) \in R[x_1^{\pm 1}, \dots, x_n^{\pm 1}]
\]
form a generating set for the ideal defining $\fX^w$ in $\bG_{m,R}^n$.
\end{proposition}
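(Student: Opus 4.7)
The plan is to identify $\fX^w$ with the closed subscheme $\fY \subset \bG_{m,R}^n$ cut out by the ideal generated by $g_j := \pi^{-\trop(f_j)(w)}\varphi_w^*(f_j)$ for $j = 1, \dots, m$, by computing the special and generic fibers of $\fY$ and then bootstrapping flatness using \autoref*{correctspecialfibergivesflat}.

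First, I would verify that each $g_j$ is a well-defined element of $R[x_1^{\pm 1}, \dots, x_n^{\pm 1}]$ and compute its reduction mod $\pi$. Writing $f_j = \sum_a c_a x^a$, one has $\varphi_w^*(f_j) = \sum_a c_a \pi^{\langle a, w \rangle} x^a$. Since $\trop(f_j)(w) = \min_{a \in \supp(f_j)}\langle a, w\rangle$, division by $\pi^{\trop(f_j)(w)}$ is integral, and reduction mod $\pi$ retains exactly those monomials for which $\langle a, w\rangle$ achieves the minimum, yielding $g_j \bmod \pi = \init_w f_j$. By the hypothesis on the $f_j$, the special fiber of $\fY$ therefore equals $\init_w U$ as a closed subscheme of $\bG_{m,k}^n$.

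Next, I would analyze the generic fiber. Over $K = \mathrm{Frac}(R)$, $\pi^{\trop(f_j)(w)}$ is a unit, so the ideal $(g_1, \dots, g_m)$ in $K[x_1^{\pm 1}, \dots, x_n^{\pm 1}]$ agrees with $(\varphi_{w,\eta}^*(f_1), \dots, \varphi_{w,\eta}^*(f_m))$. Since $f_1, \dots, f_m$ generate the ideal of $\fX$ in $\bA_R^n$ and hence the ideal of $\fX_\eta \cap \bG_{m,K}^n$ in $\bG_{m,K}^n$, pulling back along the $K$-torus automorphism $\varphi_{w,\eta}$ yields generators of the ideal of $\varphi_{w,\eta}^{-1}(\fX_\eta) = \fX^w_\eta$. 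Thus $\fY_\eta = \fX^w_\eta$.

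Finally, I would bootstrap flatness. Let $\fY^\flat \hookrightarrow \fY$ be the closed subscheme defined by the $R$-torsion ideal of its coordinate ring. Since $R$-torsion vanishes after inverting $\pi$, $\fY^\flat$ has the same generic fiber as $\fY$, namely $\fX^w_\eta$, and $\fY^\flat$ is flat over $R$ by construction. The uniqueness of the flat model characterizing $\fX^w$ forces $\fY^\flat = \fX^w$, so the special fiber of $\fY^\flat$ is $\init_w U$. By the second paragraph this is also the special fiber of $\fY$, so \autoref*{correctspecialfibergivesflat} applies and shows $\fY$ is flat over $R$. Combining flatness with $\fY_\eta = \fX^w_\eta$ and the uniqueness of the flat model yields $\fY = \fX^w$, as desired. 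The step that requires the most care is the invocation of \autoref*{correctspecialfibergivesflat}: a priori the closed immersion $\fY^\flat \hookrightarrow \fY$ could be strict on special fibers, and it is precisely the initial-ideal hypothesis on the $f_j$ that prevents this and makes the whole argument go through.
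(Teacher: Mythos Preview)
Your proposal is correct and follows essentially the same approach as the paper: define $\fY$ by the displayed generators, identify its generic fiber with $\fX^w_\eta$ and its special fiber with $\init_w U$, recognize $\fX^w$ as the $R$-torsion-free quotient $\fY^\flat$, and then apply \autoref{correctspecialfibergivesflat} to conclude $\fY$ is flat and hence equal to $\fX^w$. Your write-up is somewhat more explicit (e.g.\ the monomial computation showing $g_j \bmod \pi = \init_w f_j$), but the logical skeleton matches the paper's proof exactly.
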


\begin{proof}
Let $\fY$ be the closed subscheme of $\bG_{m,R}^n$ defined by the ideal generated by
\[
	\pi^{-\trop(f_1)(w)}\varphi_w^*(f_1), \dots, \pi^{-\trop(f_m)(w)}\varphi_w^*(f_m).
\]
Then by construction, the generic fiber of $\fY$ is equal to $\fX^w_\eta$, and $\fX^w$ is equal to the closed subscheme of $\fY$ defined by its $R$-torsion ideal. The special fiber of $\fY$ is the closed subscheme of $\bG_{m,k}^n$ defined by $\init_w f_1, \dots, \init_w f_m$ and thus is equal to $\init_w U$, which is also the special fiber of $\fX^w$. Therefore by \autoref*{correctspecialfibergivesflat}, $\fY$ is flat over $R$, so $\fX^w$ is equal to $\fY$.
\end{proof}

\begin{proposition}
The closed subscheme $\fX^w \subset \bG_{m,R}^n$ is $\mu_\ell$-invariant.
\end{proposition}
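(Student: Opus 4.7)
The plan is to derive the $\mu_\ell$-invariance of $\fX^w$ from the equivariance of $\varphi_w$ established in \autoref*{varphiwequivariant}, together with the explicit presentation given by \autoref*{grobnerflatness}. Since $\fX = X \times_k \Spec(R)$ is endowed with the product action (trivial on $X$, the given action on $R$), it is manifestly a $\mu_\ell$-invariant closed subscheme of $\bA_R^n$. By \autoref*{varphiwequivariant} the preimage $\varphi_w^{-1}(\fX)$ is therefore a $\mu_\ell$-invariant closed subscheme of $\bG_{m,R}^n$. Recall that $\fX^w$ was identified, just after the definition, with the closed subscheme of $\varphi_w^{-1}(\fX)$ cut out by its $R$-torsion ideal.

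So it suffices to check that for any $R$-algebra $A$ with a compatible $\mu_\ell$-action, the $\pi$-torsion ideal of $A$ is $\mu_\ell$-invariant. This is immediate: if $\pi^N f = 0$ and $\xi \in \mu_\ell$, then applying $\xi$ gives $(\xi^{-1}\pi)^N (\xi \cdot f) = 0$, and multiplying by $\xi^N$ yields $\pi^N (\xi \cdot f) = 0$. Thus the defining ideal of $\fX^w$ inside $\varphi_w^{-1}(\fX)$ is preserved by the action, whence $\fX^w \subset \bG_{m,R}^n$ is $\mu_\ell$-invariant.

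One can equivalently check the conclusion directly from \autoref*{grobnerflatness}: fixing generators $g_i = \pi^{-\trop(f_i)(w)}\varphi_w^*(f_i)$ of the ideal of $\fX^w$ in $\bG_{m,R}^n$, one observes that for any monomial $x^a$, $\varphi_w^*(x^a) = \pi^{a \cdot w}x^a$, and the $(\mu_\ell, w)$-action combined with the action on $R$ sends this to $\xi^{-a \cdot w}\pi^{a \cdot w} \cdot \xi^{a \cdot w} x^a = \pi^{a \cdot w} x^a$ (using \autoref*{characterctionpullback}). Hence each $\varphi_w^*(f_i)$ is $\mu_\ell$-invariant, so $\xi \cdot g_i = \xi^{\trop(f_i)(w)} g_i$, and the ideal is preserved up to a unit on each generator. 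No real obstacle arises: the substantive content has already been packaged in Propositions \ref*{varphiwequivariant} and \ref*{grobnerflatness}, and the remaining ingredient is only the elementary observation that $\mu_\ell$ acts on $\pi$ by a unit multiple of itself.
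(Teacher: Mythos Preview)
Your proposal is correct. The second argument you sketch---computing directly that $\xi \cdot g_i = \xi^{\trop(f_i)(w)} g_i$ for the generators $g_i = \pi^{-\trop(f_i)(w)}\varphi_w^*(f_i)$ supplied by \autoref{grobnerflatness}---is exactly the paper's proof.

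Your first argument is a genuinely different and somewhat cleaner route. The paper works with explicit generators and verifies by hand that each is sent to a unit multiple of itself. You instead observe that $\fX \subset \bA_R^n$ is $\mu_\ell$-invariant, so by \autoref{varphiwequivariant} the preimage $\varphi_w^{-1}(\fX)$ is $\mu_\ell$-invariant, and then use the paper's identification of $\fX^w$ with the closed subscheme of $\varphi_w^{-1}(\fX)$ cut out by its $R$-torsion ideal. The one-line check that $R$-torsion is preserved under any $k$-algebra automorphism sending $\pi$ to a unit multiple of itself finishes it. This approach does not need \autoref{grobnerflatness} at all, and it is more conceptual: it would apply verbatim to any $\mu_\ell$-equivariant $\varphi_w$ and any invariant $\fX$, without reference to a particular presentation. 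The paper's explicit computation, on the other hand, has the small advantage of displaying the exact character by which $\mu_\ell$ scales each generator, which is sometimes useful downstream.
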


\begin{proof}
By the hypotheses on $X$, we know there exist $f_1, \dots, f_{n-d} \in k[x_1, \dots, x_n]$ that generate the ideal of $X$ such that $\init_w f_1, \dots, \init_w f_{n-d}$ generate the ideal of $\init_w U$, so by \autoref*{grobnerflatness},
\[
	\pi^{-\trop(f_1)(w)}\varphi_w^*(f_1), \dots, \pi^{-\trop(f_{n-d})(w)}\varphi_w^*(f_{n-d}) \in R[x_1^{\pm 1}, \dots, x_n^{\pm 1}]
\]
generate the ideal defining $\fX^w$ in $\bG_{m,R}^n$. 

Thus it will be sufficient to show that if $f \in k[x_1, \dots, x_n]$, $\xi \in \mu_\ell$, and $\xi_1: R[x_1^{\pm 1}, \dots, x_n^{\pm 1}] \to R[x_1^{\pm 1}, \dots, x_n^{\pm 1}]$ is its action, then $\xi_1(\pi^{-\trop(f)(w)}\varphi_w^*(f))$ is in the ideal of $R[x_1^{\pm 1}, \dots, x_n^{\pm 1}]$ generated by $\pi^{-\trop(f)(w)}\varphi_w^*(f)$. Write
\[
	f = \sum_{u' \in \Z_{\geq 0}^n} a_{u'} (x_1, \dots, x_n)^{u'},
\]
where each $a_{u'} \in k$. Then
\[
	\pi^{-\trop(f)(w)}\varphi_w^*(f) = \sum_{u' \in \Z_{\geq 0}^n} \pi^{u' \cdot w - \trop(f)(w)} a_{u'} (x_1, \dots, x_n)^{u'},
\]
so
\begin{align*}
	\xi_1(\pi^{-\trop(f)(w)}\varphi_w^*(f)) &= \sum_{{u'} \in \Z_{\geq 0}^n} (\xi^{-1} \pi)^{u' \cdot w - \trop(f)(w)} a_{u'} \xi^{u' \cdot w} (x_1, \dots, x_n)^{u'}\\
	&= \xi^{\trop(f)(w)}\sum_{u' \in \Z_{\geq 0}^n} \pi^{u' \cdot w - \trop(f)(w)} a_{u'} (x_1, \dots, x_n)^{u'}\\
	&= \xi^{\trop(f)(w)} \pi^{-\trop(f)(w)} \varphi_w^*(f).
\end{align*}
Thus we are done.
\end{proof}

We now endow $\fX^w$ with the restriction of the $\mu_\ell$-action on $\bG_{m,R}^n$. Because $\fX^w$ is affine, this $\mu_\ell$-action is good, and by construction, this $\mu_\ell$-action makes the structure morphism equivariant. By \autoref*{varphiwequivariant}, we have that the morphism $\psi_w: \fX^w \to \fX$ is $\mu_\ell$-equivariant.

\begin{remark}
\label{specialfiberofXwaction}
By construction, the special fiber of $\fX^w$ with its induced $\mu_\ell$-action is equal to $(\init_w U)^w_\ell$.
\end{remark}

\subsection{Preparing for the change of variables formula}

For the remainder of Section \ref*{motiviczetafunctionsforschonvarieties}, let $V_u$ be the subscheme of $\bG_{m,k}^n$ defined by $(x_1, \dots, x_n)^u -1$, and if $w \in \Trop(U) \cap (\Z_{\geq 0}^n \setminus \{0\})$ is such that $u \cdot w = \ell$, let $\fX^w$ and $\psi_w: \fX^w \to \fX$ be as constructed in Subsection \ref*{morphismsforcomputingvolumes}.

\begin{proposition}
\label{cylinderforcomputingangularcomponent1tropicalfiber}
Let $w \in \Trop(U) \cap (\Z_{\geq 0}^n \setminus \{0\})$ be such that $u \cdot w = \ell$. Noting that $V_u \cap \init_w U \subset \init_w U = \fX^w_0$, the subset $\theta_0^{-1}(V_u \cap \init_w U) \subset \sG(\fX^w)$ is a $\mu_\ell$-invariant cylinder, and
\[
	\mu_{\fX^w}^{\mu_\ell}(\theta_0^{-1}(V_u \cap \init_w U)) = [(V_u \cap \init_w U)^w_\ell / \fX^w_0, \mu_\ell] \bL^{-d} \in \sM^{\mu_\ell}_{\fX^w_0}.
\]
\end{proposition}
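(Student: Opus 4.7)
The plan is to identify $\theta_0^{-1}(V_u \cap \init_w U)$ as a $\mu_\ell$-invariant cylinder in $\sG(\fX^w)$ and then compute its volume using the standard formula for the equivariant motivic measure on a Greenberg scheme.

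First, I would verify that $V_u \cap \init_w U$ is a $\mu_\ell$-invariant closed subscheme of the special fiber $\fX^w_0$. By \autoref*{specialfiberofXwaction}, $\fX^w_0$ with its inherited $\mu_\ell$-action is equal to $(\init_w U)^w_\ell$, that is, $\init_w U$ endowed with the $(\mu_\ell, w)$-action. Since $\init_w U$ is $(\mu_\ell, w)$-invariant by \autoref*{initialdegenerationinvariant}, and since $u \cdot w = \ell$ implies that $V_u = V(\chi^u - 1)$ is $(\mu_\ell, w)$-invariant by \autoref*{binomialcharacterinvariant}, the intersection $V_u \cap \init_w U$ is $\mu_\ell$-invariant in $\fX^w_0$, and its natural $\mu_\ell$-structure as a subscheme of $\fX^w_0$ is precisely the $(\mu_\ell, w)$-action, i.e.\ $(V_u \cap \init_w U)^w_\ell$.

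Next, because $\theta_0 : \sG(\fX^w) \to \sG_0(\fX^w) = \fX^w_0$ is $\mu_\ell$-equivariant with respect to the induced actions, the preimage $\theta_0^{-1}(V_u \cap \init_w U)$ is $\mu_\ell$-invariant, and it is a cylinder by definition since $V_u \cap \init_w U \subset \sG_0(\fX^w)$ is constructible (in fact closed).

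For the volume computation, I would invoke the standard property of Hartmann's equivariant motivic measure: for a smooth, pure relative dimension $d$, separated, finite type $R$-scheme $\fX^w$ with good $\mu_\ell$-action making the structure morphism equivariant, and for any $\mu_\ell$-invariant constructible $S \subset \fX^w_0$, one has
\[
	\mu_{\fX^w}^{\mu_\ell}\bigl(\theta_0^{-1}(S)\bigr) = [S/\fX^w_0, \mu_\ell]\, \bL^{-d} \in \sM^{\mu_\ell}_{\fX^w_0}.
\]
This is the $\ell=0$ case (in the indexing of truncation levels, not the order of roots of unity) of the general equivariant volume formula $\mu^{\mu_\ell}_{\fX^w}(\theta_m^{-1}(C)) = [C/\fX^w_0, \mu_\ell]\bL^{-(m+1)d}$, which is the equivariant analog of the non-equivariant cylinder measure formula and follows from the construction of $\mu^{\mu_\ell}_{\fX^w}$ in \cite{Hartmann}. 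Applying this to $S = V_u \cap \init_w U$ gives the desired equality.

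The main task here is really just bookkeeping rather than calculation: one must track the $\mu_\ell$-structures carefully to see that the structure appearing in the formula, namely the $\mu_\ell$-action inherited from the special fiber of $\fX^w$, coincides with the notation $(V_u \cap \init_w U)^w_\ell$ used in the statement. This is exactly what \autoref*{specialfiberofXwaction} guarantees, so once the invariance is established and the volume formula is cited, there is nothing more to do.
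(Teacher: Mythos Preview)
Your proposal is correct and follows essentially the same argument as the paper: invoke \autoref{binomialcharacterinvariant} and \autoref{specialfiberofXwaction} to see that $V_u \cap \init_w U$ is $\mu_\ell$-invariant in $\fX^w_0$ with the $(\mu_\ell,w)$-action, then use the $\mu_\ell$-equivariance of $\theta_0$ and the definition of the equivariant motivic measure. The only minor difference is that you also cite \autoref{initialdegenerationinvariant}, which is harmless but unnecessary here since $\init_w U$ is already the entire special fiber $\fX^w_0$.
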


\begin{proof}
By \autoref*{binomialcharacterinvariant} and \autoref*{specialfiberofXwaction}, we have that $V_u \cap \init_w U$ is a $\mu_\ell$-invariant subscheme of $\fX^w_0$, and with the restriction of this $\mu_\ell$-action, it is equal to $(V_u \cap \init_w U)^w_\ell$. The proposition then follows from the fact that the truncation morphism $\theta_0: \sG(\fX^w) \to \sG_0(\fX^w) = \fX^w_0$ is $\mu_\ell$-equivariant and the definition of the $\mu_\ell$-equivariant motivic measure.
\end{proof}

\begin{lemma}
\label{lemmaonwmapfromtorustoaffine}
Let $w  \in \Trop(U) \cap (\Z_{\geq 0}^n \setminus \{0\})$ be such that $u \cdot w = \ell$, let $\varphi_w: \bG_{m,R}^n \to \bA_R^n$ be as in Subsection \ref*{morphismsforcomputingvolumes}, and let $k'$ be an extension of $k$. Then $\varphi_w$ induces a bijection
\[
	\sG( \bG_{m,R}^n )(k') \to \{ x \in \sG(\bA_R^n)(k') \, | \, w = (\ord_{x_1}(x), \dots, \ord_{x_n}(x))\}.
\]
\end{lemma}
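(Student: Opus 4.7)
The plan is to unwind the functor-of-points descriptions of both Greenberg schemes on each side. A $k'$-point of $\sG(\bG_{m,R}^n)$ is, by Bhatt's theorem, an $R$-morphism $\Spec(k'\llbracket \pi \rrbracket) \to \bG_{m,R}^n$, i.e.\ an $n$-tuple $(y_1, \dots, y_n)$ of units in $k'\llbracket \pi \rrbracket$. Similarly, a $k'$-point of $\sG(\bA_R^n)$ is an $n$-tuple $(z_1, \dots, z_n) \in k'\llbracket \pi \rrbracket^n$. The order function $\ord_{x_i}$ evaluated at such a point is just the $\pi$-adic order of $z_i$.

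Under these identifications, the map induced by $\varphi_w$ sends $(y_1, \dots, y_n) \mapsto (\pi^{w_1} y_1, \dots, \pi^{w_n} y_n)$, since $\varphi_w^*(x_i) = \pi^{w_i} x_i$. Because each $y_i$ is a unit, $\pi^{w_i} y_i$ has $\pi$-adic order exactly $w_i$, so the image lands in the set on the right-hand side.

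For the bijection, I would note that the map is injective because $\pi^{w_i} \neq 0$ in $k'\llbracket \pi \rrbracket$, so $y_i$ can be recovered from $\pi^{w_i} y_i$. For surjectivity, given any $(z_1, \dots, z_n)$ with $\ord_\pi(z_i) = w_i$, we can write $z_i = \pi^{w_i} y_i$ for a unique $y_i \in k'\llbracket \pi \rrbracket$, and this $y_i$ must be a unit since its $\pi$-adic order is $0$; thus $(y_1, \dots, y_n)$ is a $k'$-point of $\sG(\bG_{m,R}^n)$ mapping to the given point.

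There is no real obstacle here; this lemma is essentially a formal translation of the definition of $\varphi_w$ into arc-scheme language, and the entire content is the observation that multiplication by $\pi^{w_i}$ is a bijection from units of $k'\llbracket\pi\rrbracket$ to elements of order exactly $w_i$.
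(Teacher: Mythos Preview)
Your proof is correct and follows essentially the same approach as the paper: both unwind the functor-of-points description, verify the image lands in the right set, and construct the inverse by dividing out $\pi^{w_i}$. The only minor difference is that for injectivity the paper argues via the generic fiber (open immersion on generic fibers plus separatedness of $\bG_{m,R}^n$), whereas you argue directly that multiplication by $\pi^{w_i}$ is injective in the integral domain $k'\llbracket\pi\rrbracket$; your route is slightly more elementary here, while the paper's phrasing is reused verbatim in the following proposition for $\fX^w$.
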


\begin{proof}
Let $R' = k'\llbracket \pi \rrbracket$, and let $K'$ be its field of fractions. Because $\varphi_w$ induces an open immersion on generic fibers, it induces an injection $\bG_{m,R}^n(K') \to \bA_R^n(K')$. Because $\bG^n_{m,R}$ is separated, this implies that $\varphi_w$ induces an injection $\sG(\bG_{m,R}^n)(k') \to \sG(\bA_R^n)(k')$. We thus only need to show that the image of this injection is $\{ x \in \sG(\bA_R^n)(k') \, | \, w = (\ord_{x_1}(x), \dots, \ord_{x_n}(x))\}$.

Let $y: \Spec(R') \to \bG_{m,R}^n$. Then for each $i \in \{1, \dots, n\}$, we have that $x_i(y)$ is a unit in $R'$, so by construction, 
\[
	\varphi_w(y) \in \{ x \in \sG(\bA_R^n)(k') \, | \, w = (\ord_{x_1}(x), \dots, \ord_{x_n}(x))\}.
\]

Write $w = (w_1, \dots, w_n)$, and let $x : \Spec(R') \to \bA_R^n$ be such that $\ord_{x_i}(x) = w_i$ for each $i \in \{1, \dots, n\}$. Then for each $i \in \{1, \dots, n\}$, we have that $\pi^{-w_i} x_i(x)$ is a unit in $R'$, so we may set $y: \Spec(R') \to \bG_{m,R}^n$ to be the morphism whose pullback is given by $x_i \mapsto \pi^{-w_i}x_i(x) \in R$. By construction $\varphi_w(y) = x$, and we are done.
\end{proof}

\begin{proposition}
\label{bijectionforchangeofvariables}
Let $w \in \Trop(U) \cap (\Z_{\geq 0}^n \setminus \{0\})$ be such that $u \cdot w = \ell$. Then $\psi_w: \fX^w \to \fX$ induces bijections $\sG(\fX^w)(k') \to \trop^{-1}(w)(k')$ and $\theta_0^{-1}(V_u \cap \init_w U)(k') \to (\trop^{-1}(w) \cap A_{\ell, 1})(k')$ for all extensions $k'$ of $k$.
\end{proposition}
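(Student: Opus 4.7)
The plan is to get both bijections more or less simultaneously by combining \autoref*{lemmaonwmapfromtorustoaffine} with a careful unwinding of what it means for an $R'$-arc of $\bG_{m,R}^n$ to land in $\fX^w$, where throughout $R' = k'\llbracket\pi\rrbracket$.

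First I would handle the bijection $\sG(\fX^w)(k') \to \trop^{-1}(w)(k')$. Given $y: \Spec(R') \to \bG_{m,R}^n$, the composition $\varphi_w \circ y$ lands in $\fX$ if and only if $y$ lands in the scheme-theoretic preimage $\varphi_w^{-1}(\fX)$. Since $R'$ has no $\pi$-torsion, any $R$-morphism $\Spec(R') \to \varphi_w^{-1}(\fX)$ factors through the quotient of $\varphi_w^{-1}(\fX)$ by its $R$-torsion ideal, which is exactly $\fX^w$. Hence $\psi_w$ identifies $\sG(\fX^w)(k')$ with $\{y \in \sG(\bG_{m,R}^n)(k') : \varphi_w \circ y \in \sG(\fX)(k')\}$. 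Now \autoref*{lemmaonwmapfromtorustoaffine} identifies $\sG(\bG_{m,R}^n)(k')$ bijectively with the subset of $\sG(\bA_R^n)(k')$ on which $(\ord_{x_1}, \dots, \ord_{x_n}) = w$; intersecting with $\sG(\fX)(k')$ gives exactly $\trop^{-1}(w)(k')$.

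Next I would refine this to the second bijection. Let $y \in \sG(\fX^w)(k')$. Because $y$ factors through $\bG_{m,R}^n$, each $x_i(y) \in R'$ is a unit, so writing $a_i = x_i(y)|_{\pi=0} \in (k')^\times$ for the image under $\theta_0$, the pullback along $\psi_w$ gives $x_i(\psi_w(y)) = \pi^{w_i} x_i(y)$. Consequently
\[
(x_1\cdots x_n)^u(\psi_w(y)) = \pi^{u\cdot w}\prod_i x_i(y)^{u_i} = \pi^\ell \prod_i x_i(y)^{u_i},
\]
whose order is $\ell$ and whose angular component is $\prod_i a_i^{u_i} = (x_1\cdots x_n)^u(\theta_0(y))$. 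Hence $\psi_w(y) \in A_{\ell,1}$ if and only if $\theta_0(y) \in V_u$. Since $\theta_0(y) \in \fX^w_0 = \init_w U$ automatically, this is equivalent to $\theta_0(y) \in V_u \cap \init_w U$, i.e.\ $y \in \theta_0^{-1}(V_u \cap \init_w U)$.

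Combining the two steps, $\psi_w$ sends $\theta_0^{-1}(V_u \cap \init_w U)(k')$ into $\trop^{-1}(w)(k') \cap A_{\ell,1}(k')$, and it is a bijection onto it because the first step already gave a bijection $\sG(\fX^w)(k') \to \trop^{-1}(w)(k')$ and the two conditions ($\psi_w(y) \in A_{\ell,1}$ and $y \in \theta_0^{-1}(V_u \cap \init_w U)$) were shown to be equivalent. The only subtle point is the torsion-free factorization in the first step; everything else is a direct computation with the pullback formula $\varphi_w^*(x_i) = \pi^{w_i} x_i$ together with \autoref*{lemmaonwmapfromtorustoaffine}.
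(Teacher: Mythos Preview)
Your proposal is correct and follows essentially the same route as the paper: both arguments reduce the first bijection to \autoref{lemmaonwmapfromtorustoaffine} together with the fact that $\fX^w$ is the $R$-torsion-free quotient of $\varphi_w^{-1}(\fX)$, and both obtain the second bijection by the direct computation $(x_1,\dots,x_n)^u(\psi_w(y)) = \pi^\ell\prod_i x_i(y)^{u_i}$. The only presentational difference is that the paper argues injectivity via \autoref{openimmersionongenericfibers} and separatedness of $\fX^w$, whereas you get it for free from the torsion-free factorization (any $R'$-point of $\varphi_w^{-1}(\fX)$ kills the torsion ideal since $R'$ is a domain), which is slightly more direct.
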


\begin{proof}
Fix an extension $k'$ of $k$. Because $\psi_w$ induces an open immersion on generic fibers and because $\fX^w$ is separated, we have that $\varphi_w$ induces an injection from $\sG(\fX^w)(k')$ to $\sG(\fX)(k')$. Thus we need to show that the image of $\sG(\fX^w)(k')$ is $\trop^{-1}(w)(k')$ and that the image of $\theta_0^{-1}(V_u \cap \init_w U)(k')$ is $(\trop^{-1}(w) \cap A_{\ell, 1})(k')$.

Let $y \in \sG(\fX^w)(k') \subset \sG(\bG_{m,R}^n)(k')$. By \autoref*{lemmaonwmapfromtorustoaffine}, $\psi_w(y) \in \trop^{-1}(w)(k')$. Let $x \in \trop^{-1}(w)(k') \subset \{ x' \in \sG(\bA_R^n)(k') \, | \, w = (\ord_{x_1}(x'), \dots, \ord_{x_n}(x'))\}$. By \autoref*{lemmaonwmapfromtorustoaffine}, $x$ is in the image of $\varphi_w$, where $\varphi_w$ is as in Subsection \ref*{morphismsforcomputingvolumes}. Because $\fX^w$ is the closed subscheme of $\varphi_w^{-1}(\fX)$ defined by its $R$-torsion ideal, this implies that $x$ is in the image $\psi_w$. Thus $\psi_w$ induces a bijection $\sG(\fX^w)(k') \to \trop^{-1}(w)(k')$.

Let $y \in \sG(\fX^w)(k')$. We only need to show that $\psi_w(y) \in A_{\ell, 1}(k')$ if and only if $\theta_0(y) \in (V_u \cap \init_w U)(k')$. Write $w = (w_1, \dots, w_n)$, and let $R' = k' \llbracket \pi \rrbracket$. Then
\begin{align*}
	\psi_w(y) \in A_{\ell, 1}(k') &\iff (x_1, \dots, x_n)^u(\psi_w(y)) = \pi^{\ell}(1+\pi r) \text{ for some $r \in R'$}\\
	&\iff (\pi^{w_1} x_1, \dots, \pi^{w_n} x_n)^u (y) = \pi^{u \cdot w}(1 + \pi r) \text{ for some $r \in R'$}\\
	&\iff (x_1, \dots, x_n)^u(y) = 1 + \pi r \text{ for some $r \in R'$}\\
	&\iff ((x_1, \dots, x_n)^u - 1)(\theta_0(y)) = 0\\
	&\iff \theta_0(y) \in (V_u \cap \init_w U)(k').
\end{align*}
\end{proof}

\begin{proposition}
\label{jacobiancomputation}
Let $w \in \Trop(U) \cap (\Z_{\geq 0}^n \setminus \{0\})$ be such that $u \cdot w = \ell$, and let $f_1, \dots, f_{n-d} \in k[x_1, \dots, x_n]$ be a generating set for the ideal defining $X$ in $\bA_k^n$ such that $\init_w f_1, \dots, \init_w f_{n-d} \in k[x_1^{\pm 1}, \dots, x_n^{\pm 1}]$ form a generating set for the ideal of $\init_w U$ in $\bG_{m,k}^n$. Then the jacobian ideal of $\psi_w$ is generated by
\[
	\pi^{w_1+\dots+w_n-(\trop(f_1)(w)+\dots+\trop(f_{n-d})(w))}.
\]
\end{proposition}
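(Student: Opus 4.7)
The plan is to identify local generators of $\wedge^d\Omega^1_{\fX/R}$ and $\wedge^d\Omega^1_{\fX^w/R}$ and then read off the scalar by which $\psi_w^*$ relates them; that scalar will generate the jacobian ideal. Since $\fX$ and $\fX^w$ are smooth of pure relative dimension $d$ over $R$ and are cut out in the smooth ambient schemes $\bA_R^n$ and $\bG_{m,R}^n$ (both of relative dimension $n$) by exactly $n-d$ equations, both are local complete intersections. By \autoref*{grobnerflatness}, the ideal of $\fX^w$ in $\bG_{m,R}^n$ is generated by $g_j:=\pi^{-\trop(f_j)(w)}\varphi_w^*(f_j)$. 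For each, the conormal exact sequence then yields a canonical isomorphism
\[
\wedge^{n-d}(\mathcal{I}_{\fX}/\mathcal{I}_{\fX}^2)\otimes \wedge^d\Omega^1_{\fX/R}\;\cong\;\wedge^n\Omega^1_{\bA_R^n/R}\big|_{\fX},
\]
and analogously for $\fX^w\subset\bG_{m,R}^n$.

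Next, I would define $\omega_\fX\in\wedge^d\Omega^1_{\fX/R}$ as the local generator satisfying $(f_1\wedge\cdots\wedge f_{n-d})\otimes\omega_\fX\mapsto dx_1\wedge\cdots\wedge dx_n|_{\fX}$, and similarly $\omega_{\fX^w}$ using $g_1,\dots,g_{n-d}$. Applying $\psi_w^*$ to both sides of this identity and using $\varphi_w^* x_i=\pi^{w_i}x_i$, one gets $\psi_w^*(dx_1\wedge\cdots\wedge dx_n|_{\fX})=\pi^{w_1+\cdots+w_n}\,dx_1\wedge\cdots\wedge dx_n|_{\fX^w}$, while $\varphi_w^* f_j=\pi^{\trop(f_j)(w)}g_j$ forces
\[
\psi_w^*(f_1\wedge\cdots\wedge f_{n-d})=\pi^{\trop(f_1)(w)+\cdots+\trop(f_{n-d})(w)}(g_1\wedge\cdots\wedge g_{n-d})
\]
in $\wedge^{n-d}(\mathcal{I}_{\fX^w}/\mathcal{I}_{\fX^w}^2)$. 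Naturality of the canonical isomorphism under $\psi_w^*$ then yields
\[
\pi^{w_1+\cdots+w_n}\,\omega_{\fX^w}=\pi^{\trop(f_1)(w)+\cdots+\trop(f_{n-d})(w)}\,\psi_w^*\omega_\fX,
\]
so $\psi_w^*\omega_\fX=\pi^{w_1+\cdots+w_n-(\trop(f_1)(w)+\cdots+\trop(f_{n-d})(w))}\,\omega_{\fX^w}$, identifying the generator of the jacobian ideal.

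The main bookkeeping obstacle is justifying the naturality of the canonical isomorphism used above: one must check that pulling back the conormal exact sequence of $\fX\subset\bA_R^n$ along $\psi_w$ (through the factorization $\fX^w\subset\varphi_w^{-1}(\fX)\to\fX$) commutes with the induced map of top exterior powers, and that the $(n-d)$-th exterior power of the pullback on conormals sends $f_1\wedge\cdots\wedge f_{n-d}$ to $\pi^{\sum_j\trop(f_j)(w)}g_1\wedge\cdots\wedge g_{n-d}$ rather than to $g_1\wedge\cdots\wedge g_{n-d}$ itself. This $\pi$-factor and the $\pi^{\sum_i w_i}$ factor on the ambient volume form are exactly what the canonical isomorphism converts into the jacobian exponent. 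Non-negativity of that exponent is automatic, since $\psi_w^*\omega_\fX$ must lie in the locally free $\mathcal{O}_{\fX^w}$-module $\wedge^d\Omega^1_{\fX^w/R}$.
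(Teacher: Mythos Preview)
Your argument is correct and takes a genuinely different route from the paper's. The paper computes the jacobian ideal as the $0$th Fitting ideal of $\Omega_{\fX^w/\fX}$: it writes down an explicit $n\times(2n-d)$ presentation matrix for $\Omega_{\fX^w/\fX}$ (one block coming from $\varphi_w^*dx_i=\pi^{w_i}dx_i$, the other from the partials $\partial f_j^w/\partial x_i$), expands all $n\times n$ minors, and observes that each such minor is $\pi^{w_1+\cdots+w_n-\sum_{j\in J}\trop(f_j)(w)}\varphi_w^*\Delta_I^J$ for some $J\subset\{1,\dots,n-d\}$. Smoothness of $\fX$ makes the $\Delta_I^{\{1,\dots,n-d\}}$ generate the unit ideal, so the minors with $J=\{1,\dots,n-d\}$ already produce $\pi$ to the claimed exponent; the inequality $\trop(f_j)(w)\geq 0$ (from $w\in\Z_{\geq 0}^n$ and $f_j\in k[x_1,\dots,x_n]$) shows the remaining minors contribute nothing smaller.

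Your approach bypasses the minor-by-minor bookkeeping by working directly with the adjunction isomorphism and reading off a single scalar. This is cleaner, but it trades the explicit linear algebra for the verification of naturality you flag: one must check that pulling back the conormal sequence of $\fX\subset\bA_R^n$ along $\psi_w$ yields a morphism of short exact sequences into the conormal sequence of $\fX^w\subset\bG_{m,R}^n$ (this uses that the pulled-back sequence stays exact on the left because all terms are locally free). Your final remark on non-negativity is also correct but could be sharpened: from $\pi^{\sum_j\trop(f_j)(w)}c=\pi^{\sum_i w_i}$ in the $\pi$-torsion-free ring $\fA_w$, a negative exponent would force $\pi$ to be a unit, contradicting that the special fiber $\init_w U$ is nonempty. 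The paper instead deduces non-negativity termwise from $w\in\Z_{\geq 0}^n$ and $f_j\in k[x_1,\dots,x_n]$, which it needs anyway to compare the different minors.
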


\begin{proof}
Let $\varphi_w: \bG_{m,R}^n \to \bA_R^n$ be as in Subsection \ref*{morphismsforcomputingvolumes}, and for any $f \in R[x_1, \dots, x_n]$, we will set
\[
	f^w = \pi^{-\trop(f)(w)}\varphi_w^*(f) \in R[x_1^{\pm 1}, \dots, x_n^{\pm 1}].
\]
Then by \autoref*{grobnerflatness}, the ideal defining $\fX^w$ is generated by $f_1^w, \dots, f_{n-d}^w$. Let $\fA_w= R[x_1^{\pm 1}, \dots, x_n^{\pm 1}]/(f_1^w, \dots, f_{n-d}^w)$ be the coordinate ring of $\fX^w$. Then we have the diagram
\begin{center}
\begin{tikzcd}
&&&0\\
\psi_w^*\Omega_{\fX/R} \arrow[r] & \Omega_{\fX^w/R} \arrow[r, twoheadrightarrow] & \Omega_{\fX^w/\fX} \arrow[ur] \arrow[r] & 0\\
\fA_w^n \arrow[u, twoheadrightarrow] \arrow[r] & \fA_w^n \arrow[u, twoheadrightarrow] \arrow[ur, twoheadrightarrow]\\
\fA_w^n \oplus \fA_w^{n-d} \arrow[u] \arrow[ur] \arrow[r] & \fA_w^{n-d} \arrow[u]
\end{tikzcd}
\end{center}
where the right vertical sequence is the presentation for the differentials module $\Omega_{\fX^w / R}$ induced by our presentation for $\fA_w$, the top horizontal sequence is the standard presentation for the relative differentials module $\Omega_{\fX^w / \fX}$, and the top left vertical arrow picks out the generators of $\psi_w^*\Omega_{\fX, R}$ induced by the coordinates of $\bA_R^n$. The diagonal sequence gives a presentation for $\Omega_{\fX^w / \fX}$ with matrix whose entries are the images in $\fA_w$ of the entries in the matrix
\[
\begin{pmatrix}
\pi^{w_1} & & & \partial f_1^w/\partial x_1 & & \partial f_{n-d}^w/ \partial x_1\\
& \ddots & & \vdots & \cdots & \vdots\\
& & \pi^{w_n} & \partial f_1^w/\partial x_n & & \partial f_{n-d}^w/\partial x_n
\end{pmatrix}
\]
For each $i \in \{1,\dots, n\}$ and $j \in \{1, \dots, n-d\}$,
\[
	\partial \varphi_w^*(f_j)/ \partial x_i = \sum_{i'=1}^n (\varphi_w^*(\partial f_j / \partial x_{i'})) (\partial \varphi_w^*(x_{i'}) /\partial x_i) = \pi^{w_i} \varphi_w^*(\partial f_j / \partial x_i),
\]
so
\[
	\partial f_j^w/\partial x_i = \pi^{w_i - \trop(f_j)(w)} \varphi_w^*( \partial f_j / \partial x_i).
\]
For each $m \in \{0, 1, \dots, n-d\}$ and size $m$ subsets $I \subset \{1, \dots, n\}$ and $J \subset \{1, \dots, n-d\}$, let $\Delta_I^J$ be the determinant of the size $m$ minor of the matrix $(\partial f_j / \partial x_i)_{i,j}$ given by rows in $I$ and columns in $J$. Then the jacobian ideal of $\psi_w$ is generated by the images in $\fA_w$ of
\[
	\{\pi^{w_1 + \dots + w_n - \sum_{j \in J}\trop(f_j)(w)} \varphi_w^* \Delta_I^J\}_{m, I, J}.
\]
Because $\fX$ is smooth and pure relative dimension $d$ over $R$, the unit ideal of $\fX$ is generated by the images in $R[x_1, \dots, x_n]/(f_1, \dots, f_{n-d})$ of
\[
	\{\Delta_I^{\{1, \dots, n-d\}} \, | \, \text{$I \subset \{1, \dots, n\}$ has size $n-d$}\},
\]
so the jacobian ideal of $\psi_w$ contains
\[
	\pi^{w_1+\dots+w_n-(\trop(f_1)(w)+\dots+\trop(f_{n-d})(w))}.
\]
Because $w \in \Z_{\geq 0}^n$ and each $f_j \in k[x_1, \dots, x_n]$, we have that each $\trop(f_j)(w) \geq 0$. Therefore the jacobian ideal of $\psi_w$ is in fact generated by
\[
	\pi^{w_1+\dots+w_n-(\trop(f_1)(w)+\dots+\trop(f_{n-d})(w))}.
\]
\end{proof}

\subsection{Proofs of \autoref*{initialdegenerationXschemestructure} and \autoref*{zetafunctionschon}}

We prove \autoref*{initialdegenerationXschemestructure}.

\begin{proof}[Proof of \autoref*{initialdegenerationXschemestructure}]
This is clear if $w = 0$, so we may assume that $w \in \Trop(U) \cap (\Z_{\geq 0}^n \setminus \{0\})$ is such that $u \cdot w = \ell$. In this case, the proposition follows by \autoref*{specialfiberofXwaction} and by considering the special fiber of $\psi_w: \fX^w \to \fX$.
\end{proof}

Now set $\ordjac: \Trop(U) \cap (\Z^n_{\geq 0} \setminus \{0\}) \to \Z: w \mapsto \ordjac_{\psi_w}(y)$ for any $y \in \sG(\fX^w)$, noting that by \autoref*{jacobiancomputation}, this does not depend on the choice of $y$. Also set $\ordjac(0) = 0$.

\begin{proposition}
\label{computingvolumeoffiberoftropicalization}
Let $w \in \Trop(U) \cap (\Z_{\geq 0}^n \setminus \{0\})$ be such that $u \cdot w = \ell$. Then
\[
	\mu_{\fX}(\trop^{-1}(w)) = [\init_w U / X] \bL^{-d - \ordjac(w)},
\]
and
\[
	\mu_{\fX}^{\mu_\ell}(\trop^{-1}(w) \cap A_{\ell, 1}) = [(V_u \cap \init_w U)^w_{u \cdot w}/X, \mu_\ell]\bL^{-d-\ordjac(w)}.
\]
\end{proposition}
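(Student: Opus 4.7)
The plan is to apply Hartmann's equivariant change of variables formula along the morphism $\psi_w\colon \fX^w \to \fX$ constructed in Subsection \ref*{morphismsforcomputingvolumes}, using the three preparatory propositions just proved: the bijection statement \autoref*{bijectionforchangeofvariables}, the Jacobian computation \autoref*{jacobiancomputation}, and the volume computation \autoref*{cylinderforcomputingangularcomponent1tropicalfiber}.

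First I would verify that the hypotheses of the equivariant change of variables theorem are satisfied. By \autoref*{openimmersionongenericfibers}, $\psi_w$ induces an open immersion on generic fibers, and we already noted that $\psi_w$ is $\mu_\ell$-equivariant with both $\fX$ and $\fX^w$ smooth, pure relative dimension $d$, separated, finite type over $R$. By \autoref*{bijectionforchangeofvariables}, $\psi_w$ induces a bijection $\sG(\fX^w)(k') \to \trop^{-1}(w)(k')$ for all extensions $k'$ of $k$, and a bijection $\theta_0^{-1}(V_u \cap \init_w U)(k') \to (\trop^{-1}(w) \cap A_{\ell,1})(k')$. Both $\trop^{-1}(w)$ and $\trop^{-1}(w) \cap A_{\ell,1}$ are $\mu_\ell$-invariant cylinders in $\sG(\fX)$, and $\theta_0^{-1}(V_u \cap \init_w U)$ is a $\mu_\ell$-invariant cylinder in $\sG(\fX^w)$ by \autoref*{cylinderforcomputingangularcomponent1tropicalfiber}; the full arc scheme $\sG(\fX^w)$ is trivially such a cylinder.

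Second, by \autoref*{jacobiancomputation}, the function $\ordjac_{\psi_w}$ is constant on $\sG(\fX^w)$ with value $\ordjac(w)$ as defined. Applying the equivariant change of variables formula to the constant function $\alpha = 0$ on $\trop^{-1}(w) \cap A_{\ell,1}$, and separately applying the non-equivariant change of variables formula to $\trop^{-1}(w)$, yields
\[
\mu_{\fX}^{\mu_\ell}(\trop^{-1}(w) \cap A_{\ell,1}) = \bL^{-\ordjac(w)} \cdot \mu_{\fX^w}^{\mu_\ell}\bigl(\theta_0^{-1}(V_u \cap \init_w U)\bigr)
\]
and
\[
\mu_{\fX}(\trop^{-1}(w)) = \bL^{-\ordjac(w)} \cdot \mu_{\fX^w}(\sG(\fX^w)),
\]
with the identifications made in $\sM^{\mu_\ell}_X$ and $\sM_X$ respectively via the pushforward along $\psi_w|_{\fX^w_0}\colon \fX^w_0 \to X$, which by \autoref*{specialfiberofXwaction} is precisely the $X$-scheme structure given by \autoref*{initialdegenerationXschemestructure}.

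Finally, I would evaluate the two volumes on $\sG(\fX^w)$. For the naive volume, $\mu_{\fX^w}(\sG(\fX^w)) = [\fX^w_0 / \fX^w_0]\bL^{-d} = \bL^{-d}$ in $\sM_{\fX^w_0}$, whose pushforward to $\sM_X$ along the structure morphism $\fX^w_0 \to X$ is $[\init_w U / X]\bL^{-d}$, giving the first equation. For the equivariant volume, \autoref*{cylinderforcomputingangularcomponent1tropicalfiber} gives
\[
\mu_{\fX^w}^{\mu_\ell}\bigl(\theta_0^{-1}(V_u \cap \init_w U)\bigr) = [(V_u \cap \init_w U)^w_\ell / \fX^w_0, \mu_\ell]\bL^{-d},
\]
and pushing forward to $X$ (and using $u \cdot w = \ell$) yields $[(V_u \cap \init_w U)^w_{u\cdot w}/X, \mu_\ell]\bL^{-d}$, which combined with the $\bL^{-\ordjac(w)}$ factor gives the second equation. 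The only point requiring care is bookkeeping between the relative classes over $\fX^w_0$ and over $X$ under pushforward, and confirming that the $\mu_\ell$-action used in \autoref*{cylinderforcomputingangularcomponent1tropicalfiber} matches the $(\mu_\ell, w)$-action used in the statement; this is exactly the content of \autoref*{specialfiberofXwaction}.
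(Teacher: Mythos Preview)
Your proposal is correct and follows essentially the same approach as the paper: the paper's proof simply cites \autoref*{openimmersionongenericfibers}, \autoref*{cylinderforcomputingangularcomponent1tropicalfiber}, and \autoref*{bijectionforchangeofvariables}, then invokes the (equivariant) motivic change of variables formula applied to $\psi_w$. Your write-up spells out more of the bookkeeping (the pushforward from $\sM_{\fX^w_0}$ to $\sM_X$ and the matching of actions via \autoref*{specialfiberofXwaction}), but the argument is the same.
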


\begin{proof}
By \autoref*{openimmersionongenericfibers} and Propositions \ref*{cylinderforcomputingangularcomponent1tropicalfiber} and \ref*{bijectionforchangeofvariables}, the proposition follows from the (equivariant) motivic change of variables formula applied to $\psi_w$.
\end{proof}

Because $u \in \Z_{>0}^n$, the monomial $(x_1, \dots, x_n)^u$ vanishes on all of $X \setminus U$. Thus the constant term of $Z_{X,u}^\naive(T)$ is equal to
\[
	[U / X]\bL^{-d} = [\init_0 U / X] \bL^{-d - \ordjac(w)}.
\]
Therefore, \autoref*{zetafunctionschon} follows from \autoref*{jacobiancomputation} and the next proposition.

\begin{proposition}
The coefficient of $T^\ell$ in $Z_{X,u}^{\hat\mu}(T)$ is equal to
\[
	\sum_{\substack{w \in \Trop(U) \cap (\Z_{\geq 0}^n \setminus \{0\})\\ u \cdot w = \ell}} [(V_u \cap \init_w U)^w_{u \cdot w} / X, \hat\mu] \bL^{-d - \ordjac(w)},
\]
and the coefficient of $T^\ell$ in $Z_{X,u}^\naive(T)$ is equal to
\[
	\sum_{\substack{w \in \Trop(U) \cap (\Z_{\geq 0}^n \setminus \{0\})\\ u \cdot w = \ell}} [\init_w U / X] \bL^{-d - \ordjac(w)}.
\]
\end{proposition}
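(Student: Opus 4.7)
The plan is to combine the pieces already assembled earlier in the section. \autoref*{coefficientDLfromvolume} and \autoref*{coefficientIgfromvolume} express the coefficients of the motivic zeta functions in terms of single motivic volumes on $\sG(\fX)$; \autoref*{summingvolumesoffibersgivescoefficient} breaks those volumes into finite disjoint sums of fiber volumes over the tropical points $w$ with $u \cdot w = \ell$; and \autoref*{computingvolumeoffiberoftropicalization} identifies each of those fiber volumes with the claimed summand. So the proposition is essentially a direct chain of substitutions.

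More concretely, first I would apply \autoref*{coefficientDLfromvolume} to rewrite the coefficient of $T^\ell$ in $Z_{X,u}^{\hat\mu}(T)$ as the image in $\sM_X^{\hat\mu}$ of $\mu_\fX^{\mu_\ell}(A_{\ell,1})$, and \autoref*{coefficientIgfromvolume} to rewrite the coefficient of $T^\ell$ in $Z_{X,u}^\naive(T)$ as $\mu_\fX(A_\ell)$; here the zeta functions are those of $(x_1,\dots,x_n)^u|_X$, which pulls back to $(x_1,\dots,x_n)^u|_\fX$ along $\fX \to X$, so the hypothesis of those propositions is satisfied. Next, \autoref*{summingvolumesoffibersgivescoefficient} expresses each of these as a finite sum of the fiber volumes $\mu_\fX^{\mu_\ell}(\trop^{-1}(w) \cap A_{\ell,1})$ or $\mu_\fX(\trop^{-1}(w))$, indexed by $w \in \Trop(U) \cap (\Z_{\geq 0}^n \setminus \{0\})$ with $u \cdot w = \ell$. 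Finally, \autoref*{computingvolumeoffiberoftropicalization} supplies, term by term, the identifications
\[
    \mu_\fX(\trop^{-1}(w)) = [\init_w U / X]\,\bL^{-d - \ordjac(w)}
\]
and
\[
    \mu_\fX^{\mu_\ell}(\trop^{-1}(w) \cap A_{\ell,1}) = [(V_u \cap \init_w U)^w_{u \cdot w}/X, \mu_\ell]\,\bL^{-d-\ordjac(w)},
\]
and pushing the latter forward from $\sM_X^{\mu_\ell}$ to $\sM_X^{\hat\mu}$ produces the desired $\hat\mu$-equivariant class.

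The only bookkeeping point to check is that the $\mu_\ell$-action recorded by the equivariant motivic volume matches the $(\mu_\ell, w)$-action defining $(V_u \cap \init_w U)^w_{u\cdot w}$; but this is guaranteed by the construction of $\fX^w$ and \autoref*{specialfiberofXwaction} (using $u \cdot w = \ell$), which is exactly what \autoref*{computingvolumeoffiberoftropicalization} packages. Since all the substantive work—constructing $\fX^w$, verifying smoothness and equivariance, producing the bijection on $k'$-points, and invoking Hartmann's equivariant change of variables with the jacobian order of \autoref*{jacobiancomputation}—has already been carried out in the preceding subsections, I do not anticipate any remaining obstacle; this proposition is a direct assembly of the earlier results, and the proof will be a short paragraph citing them in the order above.
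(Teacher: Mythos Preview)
Your proposal is correct and follows exactly the same approach as the paper: the paper's proof is a one-line citation of Propositions \ref*{coefficientDLfromvolume}, \ref*{coefficientIgfromvolume}, \ref*{summingvolumesoffibersgivescoefficient}, and \ref*{computingvolumeoffiberoftropicalization}, which is precisely the chain of substitutions you spell out.
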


\begin{proof}
This follows from Propositions \ref*{coefficientDLfromvolume}, \ref*{coefficientIgfromvolume}, \ref*{summingvolumesoffibersgivescoefficient}, and \ref*{computingvolumeoffiberoftropicalization}.
\end{proof}

\section{Motivic zeta functions of hyperplane arrangements}
\label{motiviczetafunctionsprovingmainresultsection}

Let $d, n \in \Z_{>0}$, let $\cM$ be a rank $d$ loop-free matroid on $\{1, \dots, n\}$, and let $\cA \in \Gr_\cM(k)$. We will prove Theorems \ref*{hyperplanearrangementDLzetaformula} and \ref*{hyperplanearrangementIgusazetaformula}. Throughout this section, we will be using the notation defined in Section \ref*{linearsubspacesandmatroidssubsection}.

\begin{lemma}
\label{grobnerbasislinearsubspaceordjac}
Let $w=(w_1, \dots, w_n) \in \,\R^n$. Then there exist $f_1, \dots, f_{n-d} \in k[x_1, \dots, x_n]$ that generate the ideal of $X_\cA$ in $\bA_k^n$ such that the ideal of $\init_w U_\cA$ in $\bG_{m,k}^n$ is generated by $\init_w f_1, \dots, \init_w f_{n-d} \in k[x_1^{\pm 1}, \dots, x_n^{\pm 1}]$ and such that
\[
	w_1 + \dots + w_n - (\trop(f_1)(w) + \dots + \trop(f_{n-d})(w)) = \wt_{\cM}(w).
\]
\end{lemma}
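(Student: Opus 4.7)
The plan is to prove this directly by invoking the two propositions from \cite{KutlerUsatine} recalled at the end of the preliminaries, specifically \autoref*{gbasisforlinearsubspace} and \autoref*{wmaximalextrahasminimalweight}.

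First I would choose a basis $B \in \cB(\cM_w)$, which is nonempty since $\wt_\cM(w)$ is defined as a maximum over the nonempty set $\cB(\cM)$. Define the generators by $f_i = L_{C(\cM, i, B)}^\cA$ for $i \in \{1, \dots, n\} \setminus B$, obtaining $n - d$ linear forms. By \autoref*{gbasisforlinearsubspace}, the $f_i$ generate the ideal of $X_\cA$ in $\bA_k^n$ and the $\init_w f_i$ generate the ideal of $\init_w U_\cA$ in $\bG_{m,k}^n$, which handles the first two conditions of the lemma.

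To verify the weight equality, I would compute $\trop(f_i)(w)$ for each $i \notin B$. By the defining property of $L^\cA_{C(\cM,i,B)}$, the coefficient of $x_j$ is nonzero if and only if $j \in C(\cM, i, B)$; since $f_i$ is linear, its tropicalization at $w$ is therefore $\min_{j \in C(\cM,i,B)} w_j$. Applying \autoref*{wmaximalextrahasminimalweight}, this minimum equals $w_i$, so $\trop(f_i)(w) = w_i$ for every $i \in \{1, \dots, n\} \setminus B$.

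Summing gives
\[
    \sum_{i=1}^{n-d} \trop(f_i)(w) = \sum_{i \notin B} w_i = \sum_{i=1}^n w_i - \sum_{i \in B} w_i,
\]
so
\[
    w_1 + \dots + w_n - \sum_{i=1}^{n-d} \trop(f_i)(w) = \sum_{i \in B} w_i = \wt_\cM(w),
\]
where the last equality uses $B \in \cB(\cM_w)$. There is no real obstacle here: all the combinatorial work has been done in the cited propositions, and the lemma is essentially a bookkeeping consequence of the fact that the fundamental-circuit linear forms form a Gröbner-style basis whose initial degrees match the coordinates indexed by the complement of a $w$-maximal basis.
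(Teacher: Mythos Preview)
Your proposal is correct and follows essentially the same approach as the paper: choose $B \in \cB(\cM_w)$, take the fundamental-circuit linear forms $L_{C(\cM,i,B)}^\cA$ for $i \notin B$, invoke \autoref*{gbasisforlinearsubspace} for the generating statements, and use \autoref*{wmaximalextrahasminimalweight} to get $\trop(f_i)(w) = w_i$ and hence the weight identity. The only difference is that you spell out explicitly why $\trop(f_i)(w) = \min_{j \in C(\cM,i,B)} w_j$, which the paper leaves implicit.
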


\begin{proof}
Fix some $B \in \cB(\cM_w)$. Then by \autoref*{gbasisforlinearsubspace},
\[
	\{L_{C(\cM, i,B)}^\cA \, | \, i \in \{1, \dots, n\} \setminus B\} \subset k[x_1, \dots, x_n]
\]
generate the ideal of $X_\cA$ in $\bA_k^n$ and
\[
	\{ \init_w L_{C(\cM, i, B)}^\cA \, | \, i \in \{1, \dots, n\} \setminus B\} \subset k[x_1^{\pm}, \dots, x_n^{\pm}]
\]
generate the ideal of $\init_w U_\cA$ in $\bG_{m,k}^n$. By \autoref*{wmaximalextrahasminimalweight},
\[
	\trop(L_{C(\cM,i,B)}^\cA)(w) = w_i
\]
for each $i \in \{1, \dots, n\} \setminus B$. Thus
\[
	w_1 + \dots + w_n - \sum_{i \in \{1, \dots, n\} \setminus B} \trop(L_{C(\cM,i,B)}^\cA)(w) = \sum_{i \in B} w_i = \wt_\cM(w).
\]
\end{proof}

We now prove \autoref*{hyperplanearrangementDLzetaformula}.

\begin{proposition}
We have that
\[
	Z^{\hat\mu}_{\cA, k}(T) = \sum_{w \in \Trop(\cM) \cap (\Z_{\geq 0}^n \setminus \{0\})} [F_{\cA_w}, \hat\mu] \bL^{-d-\wt_\cM(w)}(T, \dots, T)^w \in \sM_k^{\hat\mu}\llbracket T \rrbracket,
\]
and
\[
	Z^{\hat\mu}_{\cA,0}(T) = \sum_{w \in \Trop(\cM) \cap \Z_{> 0}^n} [F_{\cA_w}, \hat\mu] \bL^{-d-\wt_\cM(w)}(T, \dots, T)^w \in \sM_k^{\hat\mu}\llbracket T \rrbracket.
\]
\end{proposition}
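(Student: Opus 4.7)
The plan is to deduce both formulas from Corollary \ref*{tropicalzetaformulahomogeneous} applied to $X=X_\cA\subset\bA_k^n$ with the monomial exponent $u=(1,\dots,1)\in\Z_{>0}^n$, so that $(x_1,\dots,x_n)^u = x_1\cdots x_n$ and $u\cdot w = w_1+\cdots+w_n$. By the definitions in Section \ref*{linearsubspacesandmatroidssubsection}, the pushforward/pullback of $Z^{\hat\mu}_{X_\cA,u}(T)$ along $X_\cA\to\Spec(k)$ and along the inclusion of the origin into $X_\cA$ are, respectively, $Z^{\hat\mu}_{\cA,k}(T)$ and $Z^{\hat\mu}_{\cA,0}(T)$, so it suffices to show that Corollary \ref*{tropicalzetaformulahomogeneous} applies here and that its summands reduce to $[F_{\cA_w},\hat\mu]\bL^{-d-\wt_\cM(w)}(T,\dots,T)^w$.

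First I would verify the standing hypotheses of Section \ref*{motiviczetafunctionsforschonvarieties}. Smoothness and pure-dimensionality of $X_\cA$ are automatic since it is a linear subspace; nonemptiness of $U_\cA = X_\cA\cap\bG_{m,k}^n$ follows from $\cM$ being loop-free. For any $w\in\Trop(U_\cA)\cap\Z_{\geq 0}^n = \Trop(\cM)\cap\Z_{\geq 0}^n$ one has $\init_w U_\cA = U_{\cA_w}$, which is again the complement of a loop-free hyperplane arrangement, hence smooth; and \autoref*{gbasisforlinearsubspace} supplies, for any basis $B\in\cB(\cM_w)$, a system of linear forms $f_j = L_{C(\cM,j,B)}^\cA$ ($j\notin B$) generating the ideal of $X_\cA$ whose initial forms generate the ideal of $\init_w U_\cA$.

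Next I would produce a homogeneity vector. Take $v=(1,\dots,1)$: then $u\cdot v = n>0$, and because $X_\cA$ is cut out by homogeneous linear forms, translating $w$ by $v$ multiplies every monomial in such a form by the same factor of $\pi$, so $\init_w U_\cA = \init_{w+v}U_\cA$ for all $w\in\Z^n$. This is exactly the hypothesis of \autoref*{tropicalzetaformulahomogeneous}. For the jacobian exponent appearing in that corollary, \autoref*{grobnerbasislinearsubspaceordjac} proved just above says precisely that the generating set from \autoref*{gbasisforlinearsubspace} realizes $\ordjac(w)=\wt_\cM(w)$.

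Finally I would identify the equivariant coefficient. With $u=v=(1,\dots,1)$, the $(\mu_n,v)$-action on $\bG_{m,k}^n$ is scalar multiplication $x_i\mapsto \xi x_i$, which agrees with the $\mu_n$-action used to define $[F_{\cA_w},\hat\mu]$ in the introduction. Since $V_u = V(x_1\cdots x_n-1)$ and $\init_w U_\cA = U_{\cA_w}$, we have an equality of schemes-with-$\mu_n$-action
\[
(V_u\cap\init_w U_\cA)^{v}_{u\cdot v} = F_{\cA_w},
\]
and hence of classes $[(V_u\cap\init_w U_\cA)^v_{u\cdot v},\hat\mu] = [F_{\cA_w},\hat\mu]\in\sM_k^{\hat\mu}$. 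Substituting these identifications, together with $T^{u\cdot w}=(T,\dots,T)^w$, into parts (b) and (c) of \autoref*{tropicalzetaformulahomogeneous} yields the two formulas in the statement; note that the origin formula only involves $w\in\Z_{>0}^n$ because \autoref*{initialformmappedtoorigin} guarantees an empty fiber over the origin whenever some $w_i=0$. The main obstacle is purely bookkeeping — aligning the matroidal definition of $\cA_w$, the $(\mu_n,v)$-action, and the weight $\wt_\cM$ with the abstract data appearing in \autoref*{tropicalzetaformulahomogeneous} — and all of it has essentially been isolated into \autoref*{gbasisforlinearsubspace}, \autoref*{wmaximalextrahasminimalweight}, and the homogeneity observation that $\init_w U_\cA$ depends only on $w$ modulo $\Z\cdot(1,\dots,1)$.
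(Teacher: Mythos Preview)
Your proposal is correct and follows essentially the same approach as the paper: apply \autoref*{tropicalzetaformulahomogeneous} with $X=X_\cA$, $u=v=(1,\dots,1)$, and invoke \autoref*{grobnerbasislinearsubspaceordjac} to identify $\ordjac(w)=\wt_\cM(w)$. Your write-up simply makes explicit the verifications (smoothness of initial degenerations, the homogeneity of linear forms giving $\init_w U_\cA=\init_{w+v}U_\cA$, and the identification $(V_u\cap\init_w U_\cA)^v_{u\cdot v}=F_{\cA_w}$) that the paper leaves implicit in its one-line proof; your mention of \autoref*{initialformmappedtoorigin} is harmless but redundant, since that restriction to $\Z_{>0}^n$ is already built into part (c) of \autoref*{tropicalzetaformulahomogeneous}.
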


\begin{proof}
By setting $X = X_\cA$, $u = (1, \dots, 1)$, and $v = (1, \dots, 1)$, the proposition follows directly from \autoref*{tropicalzetaformulahomogeneous} and \autoref*{grobnerbasislinearsubspaceordjac}.
\end{proof}

We end this section by proving \autoref*{hyperplanearrangementIgusazetaformula}.

\begin{proposition}
We have that
\[
	Z^{\naive}_{\cA,k}(T) = \sum_{w \in \Trop(\cM) \cap \Z_{\geq 0}^n} \chi_{\cM_w}(\bL) \bL^{-d-\wt_\cM(w)}(T, \dots, T)^w \in \sM_k\llbracket T \rrbracket,
\]
and 
\[
	Z^{\naive}_{\cA,0}(T) = \sum_{w \in \Trop(\cM) \cap \Z_{> 0}^n} \chi_{\cM_w}(\bL) \bL^{-d-\wt_\cM(w)}(T, \dots, T)^w \in \sM_k\llbracket T \rrbracket.
\]
\end{proposition}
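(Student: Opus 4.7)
The plan is to derive this formula as the motivic Igusa analogue of \autoref*{hyperplanearrangementDLzetaformula}, using the Igusa portion of \autoref*{zetafunctionschon} rather than the equivariant version. Specifically, set $X = X_\cA$ and $u = (1, \dots, 1) \in \Z_{>0}^n$, so that $(x_1, \dots, x_n)^u|_{X_\cA} = (x_1 \cdots x_n)|_{X_\cA}$ is exactly the defining polynomial of the arrangement. Then $U = U_\cA = X_\cA \cap \bG_{m,k}^n$, and since $\cM$ is loop-free, $U_\cA$ is nonempty and $\Trop(U_\cA) = \Trop(\cM)$.

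To apply \autoref*{zetafunctionschon} I first verify its hypotheses: $X_\cA$ is a linear subspace so it is smooth of pure dimension $d$; for any $w \in \Trop(\cM) \cap \Z_{\geq 0}^n$ the initial degeneration $\init_w U_\cA = U_{\cA_w}$ is again the complement of a hyperplane arrangement inside a subtorus and hence smooth; and the required compatible generating set is produced by \autoref*{grobnerbasislinearsubspaceordjac}, which simultaneously computes
\[
    \ordjac(w) = w_1 + \cdots + w_n - \bigl(\trop(f_1)(w) + \cdots + \trop(f_{n-d})(w)\bigr) = \wt_\cM(w).
\]
The Igusa formula from \autoref*{zetafunctionschon}(b) then yields, in $\sM_{X_\cA}\llbracket T \rrbracket$,
\[
    Z^{\naive}_{\cA}(T) = \sum_{w \in \Trop(\cM) \cap \Z_{\geq 0}^n} [\init_w U_\cA / X_\cA] \,\bL^{-d - \wt_\cM(w)} T^{u \cdot w},
\]
and I rewrite $T^{u \cdot w} = T^{w_1 + \cdots + w_n} = (T, \dots, T)^w$.

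For the first claim I push forward each coefficient along the structure morphism $X_\cA \to \Spec(k)$: the class $[U_{\cA_w} / X_\cA]$ maps to $[U_{\cA_w}] = \chi_{\cM_w}(\bL) \in \sM_k$, the last equality being the standard expression for the characteristic polynomial as the class of the arrangement complement. For the second claim, I pull back each coefficient along the inclusion of the origin into $X_\cA$. \autoref*{initialformmappedtoorigin} identifies the fiber of $\init_w U_\cA$ over the origin of $\bA_k^n$ with $\init_w U_\cA = U_{\cA_w}$ when $w \in \Z_{>0}^n$ and with the empty scheme otherwise, so exactly the terms with $w \in \Trop(\cM) \cap \Z_{>0}^n$ survive.

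There is essentially no obstacle here: the combinatorial identifications $\Trop(U_\cA) = \Trop(\cM)$, $\init_w U_\cA = U_{\cA_w}$, $[U_{\cA_w}] = \chi_{\cM_w}(\bL)$, and $\ordjac(w) = \wt_\cM(w)$ are all already established, and the only mild care needed is to observe that the constant term $[U_\cA / X_\cA]\bL^{-d}$ of $Z^{\naive}_\cA(T)$ is precisely the $w = 0$ contribution (consistent with $\wt_\cM(0) = 0$ and $\cM_0 = \cM$), so no separate argument is required to absorb it. The proof is thus a direct specialization of \autoref*{zetafunctionschon} paralleling the derivation of \autoref*{hyperplanearrangementDLzetaformula}, but without needing \autoref*{tropicalzetaformulahomogeneous} since no equivariant structure is present.
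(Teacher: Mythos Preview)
Your proposal is correct and follows essentially the same approach as the paper: set $X = X_\cA$, $u = (1,\dots,1)$, and combine \autoref{zetafunctionschon}, \autoref{initialformmappedtoorigin}, \autoref{grobnerbasislinearsubspaceordjac}, and the identity $[U_{\cA_w}] = \chi_{\cM_w}(\bL)$. The only minor inaccuracy is the phrase ``inside a subtorus'' (in fact $U_{\cA_w}$ is an open subset of the linear space $X_{\cA_w}$), but this does not affect the argument since smoothness is all that is needed.
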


\begin{proof}
By setting $X = X_\cA$ and $u = (1, \dots, 1)$, the proposition follows from \autoref*{zetafunctionschon}, \autoref*{initialformmappedtoorigin}, \autoref*{grobnerbasislinearsubspaceordjac}, and the fact that for each $w \in \Trop(\cM)$, the class $[U_{\cA_w}] \in \sM_k$ is equal to $\chi_{\cM_w}(\bL)$.
\end{proof}

\bibliographystyle{alpha}
\bibliography{MZFHA}

\begin{thebibliography}{{Har}15}

\bibitem[Bha16]{Bhatt}
Bhargav Bhatt.
\newblock Algebraization and {T}annaka duality.
\newblock {\em Camb. J. Math.}, 4(4):403--461, 2016.

\bibitem[BMT11]{BudurMustataTeitler}
Nero Budur, Mircea {Musta\c{t}\u a}, and Zach Teitler.
\newblock The monodromy conjecture for hyperplane arrangements.
\newblock {\em Geom. Dedicata}, 153:131--137, 2011.

\bibitem[BN16]{BultotNicaise}
Emmanuel {Bultot} and Johannes {Nicaise}.
\newblock {Computing motivic zeta functions on log smooth models}.
\newblock {\em ArXiv e-prints}, October 2016.

\bibitem[BS10]{BudurSaito}
Nero Budur and Morihiko Saito.
\newblock Jumping coefficients and spectrum of a hyperplane arrangement.
\newblock {\em Math. Ann.}, 347(3):545--579, 2010.

\bibitem[BSY11]{BudurSaitoYuzvinsky}
Nero Budur, Morihiko Saito, and Sergey Yuzvinsky.
\newblock On the local zeta functions and the {$b$}-functions of certain
  hyperplane arrangements.
\newblock {\em J. Lond. Math. Soc. (2)}, 84(3):631--648, 2011.
\newblock With an appendix by Willem Veys.

\bibitem[BV16]{BoriesVeys}
Bart Bories and Willem Veys.
\newblock Igusa's {$p$}-adic local zeta function and the monodromy conjecture
  for non-degenerate surface singularities.
\newblock {\em Mem. Amer. Math. Soc.}, 242(1145):vii+131, 2016.

\bibitem[CNS18]{ChambertLoirNicaiseSebag}
Antoine {Chambert-Loir}, Johannes {Nicaise}, and Julien {Sebag}.
\newblock {\em {Motivic integration (to appear).}}, volume 325.
\newblock New York, NY: Birkh\"auser, 2018.

\bibitem[DCP95]{DeConciniProcesi}
C.~De~Concini and C.~Procesi.
\newblock Wonderful models of subspace arrangements.
\newblock {\em Selecta Math. (N.S.)}, 1(3):459--494, 1995.

\bibitem[DH01]{DenefHoornaert}
Jan Denef and Kathleen Hoornaert.
\newblock Newton polyhedra and {I}gusa's local zeta function.
\newblock {\em J. Number Theory}, 89(1):31--64, 2001.

\bibitem[DL98]{DenefLoeser1998}
Jan Denef and Fran\c{c}ois Loeser.
\newblock Motivic {I}gusa zeta functions.
\newblock {\em J. Algebraic Geom.}, 7(3):505--537, 1998.

\bibitem[DL01]{DenefLoeser2001}
Jan Denef and Fran\c{c}ois Loeser.
\newblock Geometry on arc spaces of algebraic varieties.
\newblock In {\em European {C}ongress of {M}athematics, {V}ol. {I}
  ({B}arcelona, 2000)}, volume 201 of {\em Progr. Math.}, pages 327--348.
  Birkh\"auser, Basel, 2001.

\bibitem[DL02]{DenefLoeser2002}
Jan Denef and Fran\c{c}ois Loeser.
\newblock Lefschetz numbers of iterates of the monodromy and truncated arcs.
\newblock {\em Topology}, 41(5):1031--1040, 2002.

\bibitem[Gub13]{Gubler}
Walter Gubler.
\newblock A guide to tropicalizations.
\newblock In {\em Algebraic and combinatorial aspects of tropical geometry},
  volume 589 of {\em Contemp. Math.}, pages 125--189. Amer. Math. Soc.,
  Providence, RI, 2013.

\bibitem[Gui02]{Guibert}
Gil Guibert.
\newblock Espaces d'arcs et invariants d'{A}lexander.
\newblock {\em Comment. Math. Helv.}, 77(4):783--820, 2002.

\bibitem[{Har}15]{Hartmann}
Annabelle {Hartmann}.
\newblock {Equivariant motivic integration on formal schemes and the motivic
  zeta function}.
\newblock {\em ArXiv e-prints}, November 2015.

\bibitem[Kon95]{Kontsevich}
Maxim Kontsevich.
\newblock String cohomology, December 1995.
\newblock Lecture at {Orsay}.

\bibitem[KU18]{KutlerUsatine}
Max Kutler and Jeremy {Usatine}.
\newblock {Hyperplane arrangements and mixed Hodge numbers of the Milnor
  fiber}.
\newblock {\em ArXiv e-prints}, October 2018.

\bibitem[Loo02]{Looijenga}
Eduard Looijenga.
\newblock Motivic measures.
\newblock {\em Ast\'erisque}, (276):267--297, 2002.
\newblock S\'eminaire Bourbaki, Vol. 1999/2000.

\bibitem[MS15]{MaclaganSturmfels}
Diane Maclagan and Bernd Sturmfels.
\newblock {\em Introduction to tropical geometry}, volume 161 of {\em Graduate
  Studies in Mathematics}.
\newblock American Mathematical Society, Providence, RI, 2015.

\bibitem[Ran97]{Randell}
Richard Randell.
\newblock Milnor fibrations of lattice-isotopic arrangements.
\newblock {\em Proc. Amer. Math. Soc.}, 125(10):3003--3009, 1997.

\bibitem[Seb04]{Sebag}
Julien Sebag.
\newblock Int\'egration motivique sur les sch\'emas formels.
\newblock {\em Bull. Soc. Math. France}, 132(1):1--54, 2004.

\bibitem[Suc17]{Suciu}
Alexandru~I. Suciu.
\newblock On the topology of the {M}ilnor fibration of a hyperplane
  arrangement.
\newblock {\em Rev. Roumaine Math. Pures Appl.}, 62(1):191--215, 2017.

\bibitem[vdV18]{vanderVeer}
Robin \SmallHack Van \SmallHack Der~Veer\Bibkeyhack vdV.
\newblock {Combinatorial analogs of topological zeta functions}.
\newblock {\em ArXiv e-prints}, March 2018.

\end{thebibliography}

\end{document}